%Diciembre2018
\documentclass[11pt,reqno,a4paper]{amsart}
\usepackage[latin1]{inputenc}
\usepackage[T1]{fontenc}
\usepackage{amssymb}
\usepackage{bm}
%\usetikzlibrary{arrows}
\usepackage[alpine]{ifsym}
\usepackage{xpicture}
\usepackage{calculator}
\usepackage{graphicx}
\usepackage{xcolor}

\textwidth=15cm \textheight=22cm \topmargin=0.5cm
\oddsidemargin=0.5cm \evensidemargin=0.5cm
\usepackage[a4paper,top=3.3cm,bottom=3cm,left=3cm,right=3cm,bindingoffset=5mm]{geometry}

%Comentarios márgenes.
\def\sideremark#1{\ifvmode\leavevmode\fi\vadjust{\vbox to0pt{\vss % the remark
      \hbox to 0pt{\hskip\hsize\hskip1em           %                will appear only
 \vbox{\hsize2cm\tiny\raggedright\pretolerance10000%                on the side
 \noindent #1\hfill}\hss}\vbox to8pt{\vfil}\vss}}} %
                                                   %                in 3cm
                                                   %                wide box
                                                   %

\usepackage{color}

\usepackage[colorlinks=true]{hyperref}
\newtheorem{theorem}{Theorem}[section]
\newtheorem{lemma}[theorem]{Lemma}
\newtheorem{corollary}[theorem]{Corollary}
\newtheorem{proposition}[theorem]{Proposition}

\theoremstyle{definition}
\newtheorem{example}[theorem]{Example}
\newtheorem{remark}[theorem]{Remark}
\newtheorem{definition}[theorem]{Definition}

\numberwithin{equation}{section}

  % Menge
  % 'mit'-Symbol in Mengen
 %definiert als

%\newcommand{\CC}{\mathbb{C}}

%\newcommand{\Pc}{\mathcal{P}}

%\newcommand{\bb}{\mathbf{b}}

%\newcommand{\FF}{\mathcal{F}}
 %Laurent Series
 %Laurent polynomial

%\newcommand{\nt}{\tilde{h}}
%\newcommand{\kt}{\tilde{k}}

%\newcommand{\Q}[1]{\mathcal{N}(#1)}

%Hilbert depth of $\ZZ^2$--graded modules over the  polynomial ring $\kk[X_1,X_2]$

%------------------------------------------------------------------------------------------------------------------------------------------------------------------------
\begin{document}
\title[Divisorial valuations of Hirzebruch surfaces]{Non-positive and negative at infinity divisorial valuations of Hirzebruch surfaces}
%\title[Newton-Okounkov bodies]{Newton-Okounkov bodies of infinitely singular valuations}

\author[C. Galindo]{Carlos Galindo}

\address{Instituto Universitario de Matem\'aticas y Aplicaciones de Castell\'on, y Departamento de Matem\'aticas, Jaume I University, Spain.}\email{galindo@mat.uji.es}  \email{cavila@uji.es}

\author[F. Monserrat]{Francisco Monserrat}
\address{Instituto Universitario de
Matem\'atica Pura y Aplicada, Universidad Polit\'ecnica de Valencia,
Camino de Vera s/n, 46022 Valencia (Spain).}
\email{framonde@mat.upv.es}

\author[C.-J. Moreno-\'Avila]{Carlos-Jes\'us Moreno-\'Avila}

%\address{Universitat Jaume I, Campus de Riu Sec, Departamento de Matem\'aticas \& Institut Universitari de Matem\`atiques i Aplicacions de Castell\'o, 12071
%Caste\-ll\'on de la Plana, Spain}

%
%\author[M. Nickel]{Matthias Nickel}
%\address{Goethe-Universit\"at Frankfurt, FB Informatik und Mathematik, 60054 Frankfurt am Main, Germany.}
%\email{nickel@math.uni-frankfurt.de}
%

%
\subjclass[2010]{Primary: 14C20, 14E15, 13A18}
\keywords{Non-positive at infinity valuations; rational surfaces; cone of curves}
\thanks{Partially supported by the Spanish Government Ministerio de Econom\'ia, Industria y Competitividad (MINECO), grants  MTM2015-65764-C3-2-P, MTM2016-81735-REDT and BES-2016-076314, as well as by Universitat Jaume I, grant P1-1B2015-02.}

\begin{abstract}
We consider rational surfaces $Z$ defined by divisorial valuations $\nu$ of Hirzebruch surfaces. We introduce concepts of non-positivity and negativity at infinity for these  valuations and prove that these concepts admit nice local and global equivalent conditions. In particular we prove that, when $\nu$ is non-positive at infinity, the extremal rays of the cone of curves of $Z$ can be explicitly given.
\end{abstract}

\maketitle

\section{Introduction}

Valuations were introduced by Dedekind and Weber for studying Riemann surfaces but it was Kürschák who gave the first axiomatic definition. In the middle of the past century, Zariski and Abhyankar \cite{Abh1,Abh2,Zar2,Zar3,ZarSam} used the theory of valuations as a main tool to treat resolution of singularities of algebraic varieties, and much more recently, after the proof by Hironaka of resolution in characteristic zero, they are still considered suitable for the positive characteristic case \cite{Tei}.

Valuations are essentially local objects which could be used  to prove local uniformization. However, in the last years, they have been used to study global properties. The best known situation corresponds with valuations of the fraction field $K(\mathcal{O}_{\mathbb{P}^2,p})$ of the local ring $\mathcal{O}_{\mathbb{P}^2,p}$ centered at $\mathcal{O}_{\mathbb{P}^2,p}$, $\mathbb{P}^2$ being the projective plane over an algebraically closed field $k$ and $p$ a closed point in $\mathbb{P}^2$. These valuations were classified by Spivakovsky \cite{Spiv} (see also \cite{FavJon1,Grekiy}). This classification has five types and works for valuations of the fraction field of any two-dimensional regular local ring $R$ centered at $R$.

Divisorial and irrational valuations are two of these types and, for them and suitable divisors, it can be defined Seshadri-like constants \cite{CutEinLaz}, that is, objects which basically contain  the same information for valuations as Seshadri constants for points. Recall that Seshadri constants were used by Demailly \cite{Dem} for studying the Fujita's conjecture. Even in the most simple case, where the local ring is $\mathcal{O}_{\mathbb{P}^2,p}$ and the divisor is a line, Seshadri-like constants are very difficult to compute; they allow us to establish the concept of minimal valuation and propose a conjecture which implies the Nagata conjecture and is implied by that of Greuel-Lossen-Shustin (see \cite{GalMonMoy} and also \cite{DumHarKurRoeSze}, where the mentioned Seshadri-like constant is denoted by $\hat{\mu}(\nu)$).

Spivakovsky's classification also contains the so-called exceptional curve valuations. Exceptional curve valuations are given by a pair whose first projection, $\nu_1$, is a divisorial valuation. When the local ring is $\mathcal{O}_{\mathbb{P}^2,p}$, they correspond with flags of the form $\{X=X_r\supset E_r\supset \{q\}\}$, where $q$ is a closed point and $X_r$ is the surface obtained after a finite simple sequence of point blowing-ups starting at $p$, $E_r$ being the last obtained exceptional divisor which defines $\nu_1$. Newton-Okounkov bodies \cite{KavKho,Oko3} are the analogue to Seshadri constants for these exceptional curve valuations and, again, are very difficult to explicitly compute \cite{CilFarKurLozRoeShr,GalMonMoyNic2}.

Recently, in \cite{GalMon}, it was considered a class $\mathcal{N}$ of divisorial valuations $\nu$ (of $K(\mathcal{O}_{\mathbb{P}^2,p})$ centered at $\mathcal{O}_{\mathbb{P}^2,p}$) which have a similar behaviour as that of curves with only one place at infinity \cite{AbhMoh}. They were named non-positive at infinity  because satisfy $\nu(f)\leq 0$ for every $f\in k[x,y]\setminus \{0\}$,  $\{x,y\}$ being affine  coordinates in the chart of points which are not in the line at infinity (which contains $p$). When $\nu(f)<0$, they are called negative at infinity. These valuations are centered at infinity because the point $p$ is in the line at infinity \cite{FavJon2}. Recently, this last class of valuations has been studied and used in different contexts \cite{CamPilReg, FavJon2, FavJon3, Jon, Mond}. The set of divisorial valuations $\nu$ centered at $\mathcal{O}_{\mathbb{P}^2,p}$ and that of finite simple sequences of point blowing-ups starting with the blowing-up at $p$ are bijective, and each valuation $\nu$ determines a rational projective surface $X$. In \cite{GalMon} we proved that the fact that $\nu$ belongs to $\mathcal{N}$ is equivalent to that of the cone of curves $NE(X)$ is regular, and we gave a simple characterization of this fact. Even more, we are able to compute the Seshadri-like constant with respect to a line divisor for  valuations in $\mathcal{N}$ \cite{GalMonMoy}, and to explicitly give the Newton-Okounkov bodies of flags where the valuation given by the divisor $E_r$ belongs to $\mathcal{N}$ \cite{GalMonMoyNic2}.

Since the projective plane and the Hirzebruch surfaces provide the classical minimal models for rational surfaces, we consider divisorial valuations of the fraction field $K(\mathcal{O}_{\mathbb{F}_\delta,p})$ centered at $\mathcal{O}_{\mathbb{F}_\delta,p}$ (called here divisorial valuations of $\mathbb{F}_\delta$), where $\mathbb{F}_\delta$ is any Hirzebruch surface and $p\in\mathbb{F}_\delta$  is a closed point. Our objective is to find suitable affine charts on a Hirzebruch surface such that, as in the case of the class $\mathcal{N}$, valuations $\nu$ which are non positive (or negative) on non-zero regular functions on these charts give rise to surfaces (obtained by the sequence of blowing-ups given by $\nu$ which starts at the Hirzebruch surface) with nice geometrical global properties. Notice that, as algebraic objects, valuations of $\mathbb{F}_\delta$ do not differ from valuations centered at regular closed points of other birationally equivalent surfaces; however we desire to relate the mentioned valuations with global geometric aspects of Hirzebruch surfaces. We will show that, in this case, there exist two natural charts "at infinity". On the one hand that given by points which are neither in the fiber $F_1$ that contains $p$ nor in the special section $M_0$ of $\mathbb{F}_\delta$, and, on the other hand, by points which are neither in $F_1$ nor in a particular uniquely defined section $M_1 \neq M_0$. In this paper, we will divide the divisorial valuations of $\mathbb{F}_\delta$ in two classes, special and non-special, according to the chart at infinity to be used for introducing the concepts of non-positive and negative at infinity divisorial valuation of $\mathbb{F}_\delta$. That is, over each point $p \in \mathbb{F}_\delta$, we will consider special or non-special divisorial valuations which will determine the chart to be used to define non positivity or negativity at infinity.
We will give several characterizations of those concepts, including one which is very easy to check  from the dual graph of the valuation $\nu$ (that involves only topological information) and the images by $\nu$ of (the germs at $p$ of) the fiber and sections before introduced (see Item (c) in theorems \ref{Th1_caso_especial} and \ref{Th1_caso_general}), and, in the case of negative at infinity valuations, the Iitaka dimension of certain divisor (Item (b) in theorems \ref{Th2_caso_especial} and \ref{Th2_caso_general}).

Each divisorial valuation $\nu$ of $\mathbb{F}_\delta$ defines in a unique way a rational surface $Z$ obtained from the simple sequence of point blowing-ups given by $\nu$.  A remarkable property of non-positive at infinity valuations is that they determine (in fact, are equivalent to) the surfaces $Z$ as above  such that its cone of curves  $NE(Z)$ is finite  polyhedral and generated either by the classes of the strict transforms of the fiber $F_1$, the special section $M_0$ and the exceptional divisors (special valuations) or by the mentioned generators plus the class of the section $M_1$ (non-special valuations). Since the Hirzebruch surface $\mathbb{F}_1$ can be obtained by blowing-up a point in $\mathbb{P}^2$, our results recover those in \cite{GalMon} concerning the characterization of non-positive and negative at infinity divisorial valuations of $\mathbb{P}^2$, and provide a very simple characterization of the rational surfaces (obtained from a classical minimal model by a finite simple sequence of point blowing-ups) whose cone of curves has the above mentioned generators.

Some complementary results on the effective monoid of surfaces given by blowing-up some very concrete configurations of infinitely near points over Hirzebruch surfaces can be found in \cite{mus}.

For surfaces defined by non-positive at infinity valuations of $\mathbb{P}^2$, we are able to  decide whether their Cox rings are finitely generated \cite{GalMon}, and, as mentioned, for these same valuations, we know how to compute their Seshadri-like constants and  to explicitly obtain their corresponding Newton-Okounkov bodies. In a forthcoming paper, we will prove that similar properties can be deduced when considering non-positive at infinity valuations of $\mathbb{F}_\delta$.

Section \ref{preli} of the paper contains the ingredients we need to develop it. Special and non-special divisorial valuations of  $\mathbb{F}_\delta$ are introduced in Definition \ref{SG}. Section \ref{speci} studies the special ones and characterizes its non-positivity (respectively, negativity) at infinity in Theorem \ref{Th1_caso_especial} (respectively, Theorem \ref{Th2_caso_especial}). The non-special divisorial valuations are considered in Section \ref{gener}, being  theorems \ref{Th1_caso_general} and \ref{Th2_caso_general} the main results.

\section{Preliminaries}
\label{preli}

Given a surface $Z_0$, a (finite) simple sequence of blowing-ups starting at $Z_0$ is a sequence
\begin{equation}\label{Eq_sequencepointblowingups}
\pi: Z=Z_n\xrightarrow{\pi_n} Z_{n-1}\rightarrow \ldots \rightarrow Z_1 \xrightarrow{\pi_1} Z_0,
\end{equation}
of blowing-ups $\pi_i: Z_i\to Z_{i-1},$ $1\leq i\leq n$, centered at closed points $p_i\in Z_{i-1}$, such that $p_1=p\in Z_0$  and each $p_i,$ $2\leq i\leq n,$ belongs to the exceptional divisor created by $\pi_{i-1}.$

In this paper, we will study some global properties concerning rational surfaces obtained from simple sequences $\pi$ as above where $Z_0$ is a Hirzebruch surface. We start by recalling some basic facts about these surfaces (see \cite{Har,Beau,Reid,Moe} for additional information).

\subsection{Hirzebruch surfaces}
Let $k$ be an algebraically closed field and $\mathbb{P}^1=\mathbb{P}_k^1$ the projective line over $k$. Let $\delta$ be a non-negative integer; the $\delta$th \emph{Hirzebruch surface} is the projective ruled surface over $\mathbb{P}^1$, $\mathbb{F}_\delta : =\mathbb{P}(\mathcal{O}_{\mathbb{P}^1}\oplus\mathcal{O}_{\mathbb{P}^1}(-\delta))$,  together with the projection morphism $\mathrm{pr}:\mathbb{F}_\delta\to \mathbb{P}^1$. It is well-known that the Picard group, Pic$(\mathbb{F}_\delta),$ of $\mathbb{F}_\delta$ is isomorphic to $\mathbb{Z}\oplus\mathbb{Z}$ and admits as generators the class of a general fiber $F$ and that of a section $M_0$ of $\mathrm{pr}$ (which, if $\delta>0$, is the unique section with non-positive self-intersection and is called \emph{special section}). We recall that, with respect to this basis, the  intersection matrix is
\begin{equation*}
\left(\begin{array}{cc}
0 & 1\\
1 & -\delta
\end{array}\right).
\end{equation*}
The group  $\text{Pic}(\mathbb{F}_\delta)$ is also generated by $[F]$ and $[M]$, where $M$ denotes a linearly equi\-valent to $\delta F+M_0$ general section of $\mathrm{pr}$; the symbol $[\cdot]$ will denote the class in the Picard group, throughout the paper. Here, the intersection matrix is
\begin{equation*}
\left(\begin{array}{cc}
0 & 1\\
1 & \delta
\end{array}\right).
\end{equation*}

From a coordinates point of view, Hirzebruch surfaces $\mathbb{F}_\delta$ can be obtained as the quotient of the product of punctured affine planes over $k$, $$(\mathbb{A}^2\setminus \{(0,0)\})\times(\mathbb{A}^2\setminus \{(0,0)\}),$$ by an action of the product, $k^*\times k^*$, of multiplicative groups of the field $k$  (see \cite[\S 2.2]{Reid}).  For each $(\lambda,\mu)\in k^*\times k^*$, the action goes as follows:
\begin{equation}\label{Action_group_Fdelta}
\begin{array}{cccc}
(\lambda,1): & (X_0,X_1;Y_0,Y_1)& \to & (\lambda X_0,\lambda X_1; Y_0,\lambda^{-\delta}Y_1) \\
(1,\mu): & (X_0,X_1;Y_0,Y_1)& \to & (X_0,X_1; \mu Y_0,\mu Y_1 ).
\end{array}
\end{equation}
Note that the action preserves the ratio $(X_0:X_1)$ and that the morphism $\mathrm{pr}:\mathbb{F}_\delta \to \mathbb{P}^1$ is the projection onto the first factor. According \cite[\S 1.2]{Moe}, the homogeneous coordinate ring of $\mathbb{F}_\delta$ is the graded ring on $\mathbb{Z}\times\mathbb{Z}_{\geq 0}$,
$S_\delta:=k[X_0,X_1,Y_0,Y_1]$, where the degree of each variable is:
$$
\deg X_0=(1,0), \, \deg X_1=(1,0), \, \deg Y_0=(0,1) \text{ and}\, \deg Y_1=(-\delta,1),
$$
and, therefore, the set of the homogeneous elements in $S_\delta$ of degree $(a,b)\in \mathbb{Z}\times\mathbb{Z}_{\geq 0}$ is
\begin{align}\label{Eq_CoordinateringFdelta}
S_\delta(a,b):= H^0(\mathbb{F}_\delta,\mathcal{O}_{\mathbb{F}_\delta}(a,b))=\bigoplus_{
\alpha_0+\alpha_1=\delta \beta_1+a, \, \beta_0+\beta_1=b} k X_0^{\alpha_0}X_1^{\alpha_1}Y_0^{\beta_0}Y_1^{\beta_1}.
\end{align}
As a consequence, an integral curve $C$ on $\mathbb{F}_\delta$ is defined by an irreducible and reduced homogeneous polynomial $H\in S_\delta(a,b)$. In this case, we will say that $C$ has degree $(a,b)$. Notice that any irreducible curve $C,$ $C\neq F,M_0$, of degree $(a,b),$ satisfies that $C$ is linearly equivalent to $ aF+bM,$ where $a\geq 0$ and $b>0$. This is a consequence of \cite[V, Proposition 2.20]{Har} and the fact that $M$ is linearly equivalent to $\delta F+M_0$. So, in the sequel, we will use $[F]$ and $[M]$ as generators of $\text{Pic}(\mathbb{F}_\delta)$ because, in this case, degree and coordinates of the classes of irreducible curves will be the same.

Keeping the above notation, one can consider the four affine open sets $\{U_{ij}\}_{0\leq i,j\leq 1}$ of the surface $\mathbb{F}_\delta,$ where $U_{ij}:=\mathbb{F}_\delta\setminus \textbf{V}(X_iY_j)$. For instance, within $U_{00}$ we get affine coordinates $(1:X_1/X_0;1,(X_0^\delta Y_1)/Y_0)\cong (u,v)$, where $(u,v)\in \mathbb{A}_k^2$. Similarly we can give affine coordinates for the remaining affine open sets, and also deduce formulae for the changes of coordinates. In this paper, we will only use that, if  $(u,v)$ are  coordinates in $U_{ij}$ and $(u',v')$ are coordinates in $U_{rl}$, where $i,j,r,l\in\{0,1\}$, then
\begin{equation*}
u'=\dfrac{1}{u} \;\;\; \mbox{and} \;\; v'=\dfrac{1}{u^\delta v},
\end{equation*}
when $i=j=0$ and $r=l=1$; moreover
\begin{equation*}
u'=\dfrac{1}{u} \;\;\; \mbox{and} \;\; v'=\dfrac{v}{u^\delta},
\end{equation*}
whenever $i=0$ and $j=r=l=1$.
% \begin{equation*}
%\left\lbrace \begin{array}{l}
%u'=\dfrac{1}{u}\\[3mm]
%v'=\dfrac{1}{u^\delta v}
%\end{array}\right.,\text{if $i=j=0$ and $r=l=1$;}
%\left\lbrace \begin{array}{l}
%u'=\dfrac{1}{u}\\[3mm]
%v'=\dfrac{v}{u^\delta}
%\end{array}\right.,\text{if $i=0$ and $j=r=l=1$.}
%\end{equation*}

Considering the classical definition of minimal surfaces, it is well-known  that a minimal rational surface is isomorphic to $\mathbb{P}^2$ or to $\mathbb{F}_\delta$ for $\delta\neq 1$ and that $\mathbb{F}_1$ is isomorphic to $\mathbb{P}^2$ with a point blown-up (see \cite[Theorem V.10 and Proposition IV.1]{Beau}).

When $\delta=0$, $\mathbb{F}_0=\mathbb{P}(\mathcal{O}_{\mathbb{P}^1}\otimes\mathcal{O}_{\mathbb{P}^1})$ and we have no special section because $M$ and $M_0$ are linearly equivalent. Looking  at the ring $S_0$, we deduce that a point $p$ on $\mathbb{F}_0$ determines and it is determined by a unique curve $F$ of degree $(1,0)$ and a unique curve $M_0$ of degree $(0,1)$.

If $\delta\geq 1$, the equation $Y_1=0$ defines the special section $M_0$ and its points will be called {\it special points} of $\mathbb{F}_\delta$. Non-special points will be called {\it general}. Notice that a change of coordinates on $\mathbb{F}_\delta$ must take general (respectively, special) points to general (respectively, special) points. We conclude with the following result, stated in \cite{Moe}, which can be deduced from \eqref{Eq_CoordinateringFdelta}.
\begin{proposition}\label{Lemma_Moe}
Assume that $\delta \geq 1$. Then there exists a $\delta$-dimensional family of irreducible curves of degree $(0,1)$ passing through a general point of $\mathbb{F}_{\delta}$. Also, a curve of degree $(1,0)$ and an irreducible curve of degree $(0,1)$ meet at a general point of $\mathbb{F}_{\delta}$.
\end{proposition}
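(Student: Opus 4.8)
The plan is to read everything off the explicit description of the homogeneous pieces of $S_\delta$ given in \eqref{Eq_CoordinateringFdelta} and then to translate the resulting linear systems into the affine chart $U_{00}$ with coordinates $(u,v)$. First I would record that, for $a=0$ and $b=1$, the constraints $\alpha_0+\alpha_1=\delta\beta_1$ and $\beta_0+\beta_1=1$ force $(\beta_0,\beta_1)\in\{(1,0),(0,1)\}$, so that $S_\delta(0,1)$ has the basis $\{Y_0\}\cup\{X_0^{\delta-i}X_1^{i}Y_1\}_{0\le i\le\delta}$ and hence dimension $\delta+2$. Thus a curve of degree $(0,1)$ is defined by a polynomial of the shape $H=cY_0+G(X_0,X_1)Y_1$ with $c\in k$ and $G$ homogeneous of degree $\delta$, and the linear system of such curves is a $\mathbb{P}^{\delta+1}$. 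Similarly, for $a=1,b=0$ one gets $S_\delta(1,0)=kX_0\oplus kX_1$, so the curves of degree $(1,0)$ are exactly the fibers $a_0X_0+a_1X_1=0$ of $\mathrm{pr}$.

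The key step is to characterize which members $H=cY_0+GY_1$ are irreducible. I would argue geometrically from the intersection matrix: a curve $C$ of degree $(0,1)$ satisfies $[C]\cdot[F]=1$. If $c\neq 0$ then $H$ vanishes identically neither on $M_0=\{Y_1=0\}$ (there $H=cY_0$) nor on any fiber $X_1=u_0X_0$ (there $H=cY_0+X_0^\delta G(1,u_0)Y_1$), so $C$ contains no fiber as a component; since every irreducible curve meeting a general fiber in $0$ points is itself a fiber, the equality $C\cdot F=1$ then forces $C$ to be irreducible. Conversely, if $c=0$ then $H=G Y_1$ is the union of $M_0$ and the $\delta\ge 1$ fibers cut out by $G$, hence reducible. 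Therefore the irreducible members are precisely those with $c\neq 0$, and the reducible locus is the hyperplane $\{c=0\}\cong\mathbb{P}^\delta$.

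Next I would pass to the chart $U_{00}$, where dividing $H$ by $Y_0$ turns the equation into $c+\tilde g(u)\,v=0$, with $\tilde g(u):=G(1,u)$ of degree $\le\delta$; here I use that $v=X_0^\delta Y_1/Y_0$. A general point $p$ is non-special, so in these coordinates $p=(u_0,v_0)$ with $v_0\neq 0$. Passing through $p$ imposes the single linear condition $c+v_0\,\tilde g(u_0)=0$ on the $\delta+2$ coefficients $(c,g_0,\dots,g_\delta)$; since the coefficient of $c$ is $1\neq 0$ this condition is nontrivial, cutting out a $\mathbb{P}^\delta$ inside $\mathbb{P}^{\delta+1}$. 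Because $v_0\neq 0$, this $\mathbb{P}^\delta$ is not the hyperplane $\{c=0\}$ (it contains, for instance, the solution $c=1$, $G=-v_0^{-1}X_0^\delta$), so its general member has $c\neq 0$ and is therefore irreducible by the previous step. This yields the claimed $\delta$-dimensional family of irreducible curves of degree $(0,1)$ through $p$.

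For the last assertion I would note that the unique curve of degree $(1,0)$ through $p$ is the fiber $u=u_0$, and intersect it with an irreducible degree $(0,1)$ curve $c+\tilde g(u)v=0$ through $p$: restricting to $u=u_0$ leaves the single solution $v=-c/\tilde g(u_0)=v_0$, in agreement with $[F]\cdot[M]=1$. Hence the two curves meet exactly at $p$, and transversally. The main thing to be careful about is the irreducibility dichotomy of the second paragraph --- in particular making the argument genuinely global rather than chart-local --- together with verifying that \emph{general} is being used consistently as \emph{not lying on} $M_0$ (equivalently $v_0\neq 0$), which is exactly what guarantees both the nontriviality of the incidence condition and that the resulting family is not swallowed by the reducible hyperplane.
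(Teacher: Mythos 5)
Your proof is correct and takes exactly the route the paper indicates: the paper gives no argument for this proposition beyond citing \cite{Moe} and remarking that it ``can be deduced from \eqref{Eq_CoordinateringFdelta}'', and your computation of $S_\delta(0,1)$ and $S_\delta(1,0)$, the irreducibility dichotomy $c\neq 0$, and the linear incidence count are precisely that deduction carried out in detail. One caveat worth noting: the second assertion is more naturally read as saying that the unique intersection point of \emph{any} fiber with \emph{any} irreducible curve of degree $(0,1)$ is a general point of $\mathbb{F}_\delta$; this too follows from your second paragraph, since for $c\neq 0$ the restriction of $cY_0+GY_1$ to $M_0=\{Y_1=0\}$ is $cY_0$, which is nowhere zero on $M_0$ (as $Y_0\neq 0$ there), so such a curve is disjoint from $M_0$ and any point where it meets a fiber is automatically general.
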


\subsection{Divisorial valuations.}

A {\it valuation} of a field $K$ is a surjective map $$\nu:K\setminus \{0\}\to G,$$ where $G$ is a totally ordered commutative group (the value group of $\nu$), such that, for $f,g\in K\setminus\{0\}$, satisfies:
$$
\nu(f+g)\geq \min\{\nu(f),\nu(g)\} \text{ and also } \nu(fg)=\nu(f)+\nu(g).
$$
The ring $R_\nu=\{f\in K\setminus \{0\} \ | \ \nu(f)\geq 0\}\cup \{0\}$ is called the {\it valuation ring} of $\nu$. It is a local ring whose maximal ideal is $\mathfrak{m}_\nu=\{f\in K\setminus\{0\} \ | \ \nu(f)>0\}\cup \{0\}$. When $K$ is the fraction field of a local regular ring $(R,\mathfrak{m})$ and $R\cap\mathfrak{m}_\nu=\mathfrak{m}$, one says that $\nu$ is {\it centered} at $R$. When $\dim R=2$, valuations centered at $R$ ($R/\mathfrak{m}$ algebraically closed) are in one-to-one correspondence with (non-necessarily finite) simple sequences of point blowing-ups starting at $\text{Spec} R$. Divisorial valuations are those corresponding with finite simple sequences \cite{ZarSam,Spiv}.

In this paper $p$ will be a closed point in $\mathbb{F}_\delta$. Denote by $K$ the fraction field of the local ring $R=\mathcal{O}_{\mathbb{F}_\delta,p}$. Then, to give a sequence as \eqref{Eq_sequencepointblowingups} where $Z_0=\mathbb{F}_\delta$ is equivalent to give a divisorial valuation $\nu$ of $K$ centered at $R$. The valuation $\nu$ is defined by the last exceptional divisor $E$ in the sequence $\pi:Z\to Z_0$ and, frequently and by simplicity, we will say that $\nu$ is a valuation of $\mathbb{F}_\delta$. The map $\pi_1$ is the blowing-up of $Z_0$ at $p=p_1$ and $\pi_{i+1},$ $1\leq i\leq n-1$, the blowing-up of $Z_i$ at the unique point $p_{i+1}$ of the exceptional divisor defined by $\pi_i,$ $E_i,$ such that $\nu$ is centered at the local ring $\mathcal{O}_{Z_i,p_{i+1}}$. Write $\mathcal{C}_\nu:=\{p_i\}_{i=1}^n$ the sequence (or configuration) of infinitely near points above defined; $p_i$ is said to be \emph{proximate} to $p_j$, denoted by $p_i\to p_j$, whenever $i>j$ and $p_i$ belongs either to $E_j$ or to the strict transform of $E_j$ on $Z_{i-1}$. A point $p_{i}$ is \emph{satellite} whenever there exists $j<i-1$ satisfying $p_i\to p_j$; otherwise, it is called \emph{free}. As in the case of germs of plane curves  \cite{Cam}, plane divisorial valuations admit sets of invariants that help to study them. For a valuation $\nu$ as above, we will use its sequence of maximal contact values $\{\overline{\beta}_j\}_{j=0}^{g+1}$ \cite[(1.5.3)]{DelGalNun} and its sequence of Pusieux exponents $\{\beta_j'\}_{j=0}^{g+1}$ \cite[(1.5.2)]{DelGalNun}. Notice that both sequences can be obtained one from the other \cite[Theorem 1.11]{DelGalNun}. The continued fraction expansions of the values $\{\beta_j'\}_{j=0}^{g+1}$ determine (and are  determined by) the dual graph of $\nu$. The dual graph of a valuation $\nu$ as above is a labelled tree, where each vertex represents an exceptional divisor appearing in the sequence of blowing-ups \eqref{Eq_sequencepointblowingups} and two vertices are joined whenever their corresponding divisor meet. Each vertex is labelled with the number of blowing-ups needed to create the corresponding divisor. The set  $\{\overline{\beta}_j\}_{j=0}^{g}$ generates the {\it semigroup of values} of $\nu$, $S(\nu)=\nu(R\setminus\{0\})$, \cite[Remark 6.1]{Spiv}, and the sequence of maximal contact values has an extra value $\overline{\beta}_{g+1}$ which coincides with the inverse of the volume of $\nu$, $[\text{vol}(\nu)]^{-1}.$ Indeed, by definition,
$$
\text{vol}(\nu)=\lim_{\alpha\to\infty}\dfrac{\dim_k(R/ \mathcal{P}_\alpha)}{\alpha^2/2},
$$
where $\mathcal{P}_\alpha=\{f\in R \ | \ \nu(f)\geq\alpha\}\cup \{0\}$; taking into account that the above dimensions depend only on local data, the equality $\overline{\beta}_{g+1}= [\text{vol}(\nu)]^{-1}$
follows as in \cite[Remark 2.3]{GalMonMoyNic2}.

Along the paper we denote by $\varphi_i$, $1 \leq i \leq n$, an analytically irreducible germ of curve at $p$ whose strict transform on $Z_i$ is transversal to $E_i$ at a non-singular point of the exceptional locus. Also, for any curve $C$ on $\mathbb{F}_{\delta}$, $\varphi_{C}$ denotes its germ at $p$ and $(\varphi_i,\varphi_{C})_p$ equals $0$ (respectively, the intersection multiplicity at $p$ of the germs $\varphi_i$ and $\varphi_{C}$) if $C$ does not pass through $p$ (respectively, otherwise). Finally, $\text{mult}_{p_j}(\varphi_i)$ (respectively, $\text{mult}_{p_j}(\varphi_C)$), $1\leq i,j \leq n,$ means multiplicity of the strict transform of $\varphi_i$ (respectively, $\varphi_C$) at $p_j$. We will use frequently, without any mention, the so-called Noether's formula for valuations, that we recall here for convenience of the reader and whose proof can be found in \cite[Theorem 8.1.6]{Cas}:

\begin{lemma}
Let $\nu$ be a divisorial valuation of $K$ centered at $R$, with associated configuration $\mathcal{C}_\nu:=\{p_i\}_{i=1}^n$, and let $C$ be a curve on $\mathbb{F}_{\delta}$. Then
$$\nu(\varphi_C)=(\varphi_n,\varphi_C)_p=\sum_{j=1}^n {\rm mult}_{p_j}(\varphi_n)\cdot {\rm mult}_{p_j}(\varphi_C).$$
\end{lemma}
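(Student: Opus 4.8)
The plan is to split the stated double equality into its two halves, $\nu(\varphi_C)=(\varphi_n,\varphi_C)_p$ and $(\varphi_n,\varphi_C)_p=\sum_{j=1}^n \mathrm{mult}_{p_j}(\varphi_n)\cdot\mathrm{mult}_{p_j}(\varphi_C)$, and to treat each by the blow-up recursion that underlies Noether's formula, reducing everything to the proximity structure of the configuration $\mathcal{C}_\nu$. Throughout I may assume that $C$ passes through $p$, since otherwise all the germs involved are disjoint and every term vanishes by the stated convention.

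For the second equality I would invoke the classical recursive form of the intersection-multiplicity formula. After blowing up $p=p_1$ one has $(\varphi_n,\varphi_C)_p=\mathrm{mult}_{p_1}(\varphi_n)\cdot\mathrm{mult}_{p_1}(\varphi_C)+\sum_{q}(\widetilde{\varphi}_n,\widetilde{\varphi}_C)_q$, where $\widetilde{(\cdot)}$ denotes the strict transform on $Z_1$ and the sum runs over the points $q\in E_1$ through which both strict transforms pass. Since $\varphi_n$ is analytically irreducible, its strict transform meets $E_1$ at the single point $p_2$, so the sum reduces to the one term at $p_2$; iterating down the tower $Z_n\to\cdots\to Z_1$ then collapses the formula to $\sum_{j=1}^n \mathrm{mult}_{p_j}(\varphi_n)\cdot\mathrm{mult}_{p_j}(\varphi_C)$, as the only infinitely near points shared by $\widetilde{\varphi}_n$ and $\varphi_C$ are exactly $p_1,\dots,p_n$. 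This is the standard Max Noether argument.

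The substance lies in the first equality, which I would prove by induction on $n$. For $n=1$ the valuation $\nu$ is the multiplicity at $p$, the germ $\varphi_1$ is smooth and transversal to $E_1$, and both sides equal $\mathrm{mult}_p(\varphi_C)$. For the inductive step I would first note that $\nu$, being the order of vanishing $\mathrm{ord}_{E_n}(\pi^{\ast}\,\cdot)$ along the last exceptional divisor, is intrinsic to the function field $K$ and hence coincides with the valuation $\nu'$ centered at $p_2\in Z_1$ whose configuration is $\{p_2,\dots,p_n\}$ and whose generic curvette is $\widetilde{\varphi}_n$. Writing a local equation of $\varphi_C$ at $p$ as $f=h_1^{e_1}\widetilde{f}$, where $h_1$ defines $E_1$ near $p_2$, $e_1=\mathrm{mult}_{p_1}(\varphi_C)$, and $\widetilde{f}$ defines the strict transform, additivity of $\nu$ gives $\nu(\varphi_C)=e_1\,\nu(h_1)+\nu'(\widetilde{\varphi}_C)$. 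The inductive hypothesis identifies $\nu'(\widetilde{\varphi}_C)$ with $(\widetilde{\varphi}_n,\widetilde{\varphi}_C)_{p_2}$, which is precisely the surviving term of the intersection recursion above; the two recursions therefore coincide term by term, and the proof closes once I establish the key identity $\nu(h_1)=\mathrm{mult}_{p_1}(\varphi_n)$.

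I expect this last identity to be the main obstacle, since it is the one place where the valuation and the generic curvette must be compared directly rather than transported through the induction. I would prove it as a proximity computation: $\nu(h_1)=\mathrm{ord}_{E_n}(\pi^{\ast}E_1)$ is the coefficient of the total transform of the first exceptional divisor along $E_n$, and the relation $\widetilde{E}_j=E_j^{\mathrm{tot}}-\sum_{p_i\to p_j}E_i^{\mathrm{tot}}$ between strict and total transforms shows that these coefficients satisfy exactly the descending proximity equalities $m_j=\sum_{p_i\to p_j}m_i$ with terminal value $1$ that also govern the multiplicities $m_j=\mathrm{mult}_{p_j}(\varphi_n)$ of the curvette of $E_n$; comparing the two solutions gives $\nu(h_1)=\mathrm{mult}_{p_1}(\varphi_n)$ (indeed $\nu(h_j)=\mathrm{mult}_{p_j}(\varphi_n)$ for every $j$). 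With this identity in hand the induction delivers the first equality, and combined with the Noether formula of the second paragraph the full statement follows.
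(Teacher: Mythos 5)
You should note first that the paper does not prove this lemma at all: it states it ``for convenience of the reader'' and refers to \cite[Theorem 8.1.6]{Cas} for the proof, so any argument you give is by necessity a different route. The skeleton of yours is the standard self-contained one: the right-hand equality via the one-blow-up recursion for intersection multiplicities of germs, the left-hand one by induction on $n$ after identifying $\nu$ with the valuation defined by $E_n$ over $\mathcal{O}_{Z_1,p_2}$, the whole induction anchored on the identity $\nu(h_1)=\mathrm{mult}_{p_1}(\varphi_n)$. Your proof of that identity is correct: the coefficients $\mathrm{ord}_{E_n}(E_j^{\mathrm{tot}})$ and the curvette multiplicities $\mathrm{mult}_{p_j}(\varphi_n)$ obey the same downward recursion $m_j=\sum_{p_i\to p_j}m_i$ with terminal value $m_n=1$, hence coincide; this is exactly the mechanism by which a divisorial valuation is computed by intersecting with a curvette.

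There is, however, one step you assert that is not automatic and is in fact false without an extra hypothesis: ``the only infinitely near points shared by $\tilde{\varphi}_n$ and $\varphi_C$ are exactly $p_1,\dots,p_n$.'' Nothing in the definition of $\varphi_n$ (analytically irreducible, strict transform transversal to $E_n$ at a non-singular point of the exceptional locus) prevents the strict transform of $\varphi_C$ on $Z$ from passing through the point $q^{*}=\tilde{\varphi}_n\cap E_n$; when it does, the middle term is strictly larger than the other two (which remain equal, being independent of the choice of $\varphi_n$), and the stated chain of equalities fails. Already for $n=1$: with $\varphi_1=\{y=0\}$ and $\varphi_C=\{y=x^2\}$ one has $(\varphi_1,\varphi_C)_p=2$ while $\nu(\varphi_C)=\mathrm{mult}_p(\varphi_C)=1$, so your base case ``both sides equal $\mathrm{mult}_p(\varphi_C)$'' is affected as well. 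The lemma must therefore be read with $\varphi_n$ a \emph{general} curvette, and your proof needs one more sentence making this explicit and legitimate: since $\tilde{\varphi}_C\cap E_n$ is a finite set, one may choose $\varphi_n$ so that $\tilde{\varphi}_n$ meets $E_n$ away from $\tilde{\varphi}_C$; this choice changes neither $\nu$ nor the numbers $\mathrm{mult}_{p_j}(\varphi_n)$ (all curvettes of $E_n$ have the same multiplicities, by the very proximity recursion you already used), so proving the equalities for such a choice is exactly what the lemma asserts and what the paper uses. With that caveat inserted, your argument is complete.
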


\subsection{Non-positive and negative at infinity valuations.}
\label{nonposneg}

A divisorial valuation of the fraction field of $\mathcal{O}_{\mathbb{P}^2,p},$ centered at $\mathcal{O}_{\mathbb{P}^2,p},$ $p\in\mathbb{P}^2$ being closed point, (or, simply, a divisorial valuation of $\mathbb{P}^2$), is called {\it non-positive at infinity} when $\nu(h)\leq 0$ for all $h\in\mathcal{O}_{\mathbb{P}^2}(\mathbb{P}^2\setminus L),$ $L$ being a line (the line at infinity) containing $p$. In case it satisfies $\nu(h)<0$ for every non-constant function $h\in\mathcal{O}_{\mathbb{P}^2}(\mathbb{P}^2\setminus L)$, $\nu$ is named {\it negative at infinity}. As we mentioned in the introduction, non-positive and negative at infinity divisorial valuations of $\mathbb{P}^2$ have nice global properties involving the surfaces they define.

Afterwards we will introduce the concepts of non-positivity and negativity at infinity  for valuations of the fraction field of $\mathcal{O}_{\mathbb{F}_\delta,p}$, centered at $\mathcal{O}_{\mathbb{F}_\delta,p}$, $p$ being a point in $\mathbb{F}_{\delta}$. As we will see, our definition will depend on the point $p$ and a chart of the Hirzebruch surface which does not contain $p$. The goal of the paper is to show that the rational surfaces given by these valuations are easy to characterize and also enjoy nice global properties. We focus on the cone of curves and positivity properties of divisors.

%\begin{definition}
%With the above notation, let $p$ be a point in a Hirzebruch surface $\mathbb{F}_\delta,$ $\delta\neq 1$, \edz{quitar $\delta\neq 1$. Poner que en el caso general non-positive at infinity $\mathcal{C}_\nu$ tiene al menos  $\delta + 1$ puntos libres y $p_1$ es general. Además, el caso general es solo para $\delta>0$.} and $\nu$ a divisorial valuation of the fraction field of $\mathbb{F}_\delta$. Let $F$ be a fiber, $M_0$ a special section and $M$ a general section of $\mathbb{F}_\delta$. Then $\nu$ is called special (respectively, general) non-positive  at infinity whenever $\nu(h)\leq 0$ for all $h\in\mathcal{O}_{\mathbb{F}_\delta}(\mathbb{F}_\delta\setminus\{F,M_0\})$ (respectively, $h\in\mathcal{O}_{\mathbb{F}_\delta}(\mathbb{F}_\delta\setminus\{F,M\})$).\\
%$\nu$ is called special (respectively, general) negative  at infinity whenever $\nu(h)< 0$ for all $h\in\mathcal{O}_{\mathbb{F}_\delta}(\mathbb{F}_\delta\setminus\{F,M_0\})$  such that $h\not\in k$ (respectively, $\nu(h)< 0$ for all $h\in\mathcal{O}_{\mathbb{F}_\delta}(\mathbb{F}_\delta\setminus\{F,M\})$ such that $h\not\in k$).
%\end{definition}

\section{The sign at infinity of special valuations}
\label{speci}

We start this section by partitioning the set of divisorial valuations of Hirzebruch surfaces in two subsets because our main results have to do with the concept of "sign at infinity" of valuations which depends on the considered subset. First we need to fix some notations.

Let $\mathbb{F}_\delta$ be a Hirzebruch surface and $p \in \mathbb{F}_\delta$ a point on it. Consider a divisorial valuation $\nu$ of the fraction field of $\mathcal{O}_{\mathbb{F}_\delta,p}$ centered at $\mathcal{O}_{\mathbb{F}_\delta,p}$ and their associated configuration ${\mathcal C}_{\nu}=\{p_i\}_{i=1}^n$ and composition of blowing-ups $\pi:Z\to Z_0=\mathbb{F}_\delta$ as in \eqref{Eq_sequencepointblowingups}. Set $E_i$, $1 \leq i \leq n$, the exceptional divisor produced after blowing-up $p_i$ and denote by $\tilde{E}_i$ (respectively, $E_i^*$) its strict (respectively, total) transform on the surface $Z$. Often $E_i$ also means the strict transform of the divisor $E_i$. $\tilde{C}$ and $C^*$ also means strict and total transforms on $Z$ of other divisors $C$. Occasionally, these transforms could be on some surface $Z_i$ appearing in \eqref{Eq_sequencepointblowingups}.

\begin{definition}
\label{SG}
 A divisorial valuation $\nu$ as before is called to be {\it special} (with respect to $\mathbb{F}_\delta$ and $p$) when one of the following conditions holds:
\begin{enumerate}
\item $\delta = 0$.
\item $\delta >0$ and $p$ is a special point.
\item $\delta >0$, $p$ is a general point and there is no integral curve in the complete linear system $|M|$ whose strict transform on $Z$ has negative self-intersection.

%there is no irreducible curve in the complete linear system $|M^*-\sum_{i\leq \delta+1} E_i^*|$ on $Z$ with negative self-intersection.

%the index $i$ of the first satellite point $p_i$ in $\mathcal{C}_\nu$ satisfies $i \leq \delta + 1$.
\end{enumerate}
The remaining valuations will be called {\it non-special}.
\end{definition}

\begin{remark}
{\rm Looking at the local equations of the linear system $|M|$ and taking into account  their evolution by blowing-ups, it is not difficult to show that the above condition (3) holds if and only if either $p_2$ belongs to strict transform of the fiber of pr passing through $p$ on $Z_1$, or there does not exist $j\geq \delta+1$ such that the points $p_i$, $1\leq i\leq j$, of $\mathcal{C}_\nu$ are free.

%La continuación de la frase que había antes
%--------------------------------------------------
 %either the index $i$ of the first satellite point $p_i$ in $\mathcal{C}_\nu$ satisfies $i \leq \delta + 1$, or $p_2$ belongs to the strict transform of the fiber of $\mathrm{pr}$ passing through $p$.
}
\end{remark}

Throughout this section $\nu$ will be a \emph{special} divisorial valuation of $\mathbb{F}_\delta$. In addition, $F_1$ will denote the fiber passing through $p$, and $M_0$ will denote either the special section (in case $\delta\geq 1$), or the section of degree $(0,1)$ passing through $p$ (otherwise).\\

% and $\pi:Z\to Z_0=\mathbb{F}_\delta$ the birational map \eqref{Eq_sequencepointblowingups} that it defines. In addition, $F$ will denote a general fiber, $M$ a general section of $\mathrm{pr}$ of degree $(0,1)$, and $M_0$ either the special section in case $\delta\geq 1$, or the curve of degree $(0,1)$ defined by $p$, otherwise. Set $E_i$, $1 \leq i \leq n$, the exceptional divisor produced after blowing-up $p_i$ and denote by $\tilde{E}_i$ (respectively, $E_i^*$) its strict (respectively, total) transform on the surface $Z$. Often $E_i$ also means the strict transform of the divisor $E_i$. $\tilde{C}$ and $C^*$ also means strict and total transforms on $Z$ of other divisors $C$. Occasionally, these transforms could be on some surface $Z_i$ appearing in \eqref{Eq_sequencepointblowingups}.
Denote by $\text{Pic}(Z)$ the Picard group of $Z$ and $\cdot$ the intersection pair associated to $\text{Pic}(Z)$. By extension consider the linear space $\text{Pic}_\mathbb{Q}(Z)=\text{Pic}(Z)\otimes_\mathbb{Z}\mathbb{Q}$ and, by abuse of notation, $\cdot$ will denote the corresponding bilinear pairing. Recall that the convex cone of $\text{Pic}_\mathbb{Q}(Z)$ generated by the classes of effective divisors (respectively, nef divisors) is called the \emph{cone of curves} (respectively, \emph{nef cone}) and denoted by $NE(Z)$ (respectively, $P(Z)$). Notice that $P(Z)$ is the dual cone of $NE(Z)$. In the following we will denote by $\overline{NE}(Z)$ the closure of $NE(Z)$ for the usual topology.

 By \cite[Lemma 1.22]{KolMor}, the classes $[\tilde{F}_1]$ and $[\tilde{M}_0]$ span extremal rays of both cones $NE(Z)$ and $\overline{NE}(Z)$. For our purposes, it will useful to consider the strongly convex cone of $\text{Pic}_\mathbb{Q}(Z)$, $S_1(Z)$, generated by the set of classes $\{[\tilde{F}_1],[\tilde{M}_0]\}\cup\{[E_i]\}_{i=1}^n$, and also its dual cone $$S_1^\vee(Z):=\{[C]\in \text{Pic}_\mathbb{Q}(Z)\ | \ [C]\cdot [D]\geq 0 \text{ for all }[D]\in S_1(Z)\}.$$

%Along the paper we denote by $\varphi_i$, $1 \leq i \leq n$, an analytically irreducible germ of curve at $p$ whose strict transform on $Z_i$ is transversal to $E_i$ at a non-singular point of the exceptional locus. Also, for any curve $C$ on $\mathbb{F}_{\delta}$, $\varphi_{C}$ will denote its germ at $p$ and $(\varphi_i,\varphi_{C})_p$ will be $0$ (resp., the intersection multiplicity at $p$ of the germs $\varphi_i$ and $\varphi_{C}$) if $C$ does not pass thorough $p$ (resp., otherwise).

Our next result provides generators for $S_1^\vee(Z)$.

\begin{proposition}\label{Prop_generatorsofdualcone_casespecial}
The dual cone $S_1^\vee(Z)$ is generated by $[F^*],[M^*]$ and the classes $\{[\Lambda_i]\}_{i=1}^n$ of the divisors
\begin{equation}\label{Div_coneofcurvesregular}
\Lambda_i := a_iF^* + b_iM^* - \sum_{j=1}^i\text{\emph{mult}}_{p_j}(\varphi_i)E_j^*,
\end{equation}
where $
a_i:=(\varphi_i,\varphi_{M_0})_p \text{ and} \; b_i:=(\varphi_i,\varphi_{F_1})_p.$

%denotes the intersection multiplicity at $p$ of the germ $\varphi_i$ and the germ at $p$, $\varphi_{M_0}$, of the section $M_0,$ (respectively, the germ at $p$, $\varphi_{F_1}$, of the fiber $F_1$), and where $\text{\emph{mult}}_{p_j}(\varphi_i)$ is the multiplicity of the strict transform of $\varphi_i$ at $p_j$.
\end{proposition}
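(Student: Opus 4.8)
The plan is to show that $S_1(Z)$ is a \emph{simplicial} cone and that $[F^*],[M^*]$ together with the $[\Lambda_i]$ form (a relabelling of) the dual basis of its generators with respect to the intersection pairing; the proposition is then immediate from the duality between a simplicial cone and the cone generated by the dual basis. I would work throughout in the basis $\{[F^*],[M^*],[E_1^*],\dots,[E_n^*]\}$ of $\mathrm{Pic}_{\mathbb{Q}}(Z)$, for which the pairing is completely explicit: $F^*\cdot F^*=0$, $F^*\cdot M^*=1$, $M^*\cdot M^*=\delta$, $E_i^*\cdot E_i^*=-1$, $E_i^*\cdot E_j^*=0$ for $i\neq j$, and $F^*\cdot E_j^*=M^*\cdot E_j^*=0$ (the last because, by the projection formula, $\pi^*$-classes are orthogonal to classes pushing forward to $0$). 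In particular this form is non-degenerate.

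I would first rewrite the generators of $S_1(Z)$ in this basis. Since $[F_1]=[F]$ and $[M_0]=[M]-\delta[F]$ in $\mathrm{Pic}(\mathbb{F}_\delta)$, the strict-transform formulas read
$$\tilde F_1=F^*-\sum_{j=1}^n \mathrm{mult}_{p_j}(\varphi_{F_1})\,E_j^*,\qquad \tilde M_0=M^*-\delta F^*-\sum_{j=1}^n \mathrm{mult}_{p_j}(\varphi_{M_0})\,E_j^*,$$
whereas the proximity relations for exceptional divisors give $\tilde E_i=E_i^*-\sum_{p_k\to p_i}E_k^*$. The matrix passing from $\{[F^*],[M^*],[E_j^*]\}$ to $\{[\tilde F_1],[\tilde M_0],[\tilde E_i]\}$ is block-triangular with unipotent (determinant one) diagonal blocks, so these $n+2$ classes form a basis and $S_1(Z)$ is simplicial.

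The core of the proof is the computation of the pairings of $[F^*],[M^*],[\Lambda_i]$ against this basis. The explicit intersection numbers immediately give $F^*\cdot\tilde F_1=0$, $F^*\cdot\tilde M_0=1$, $M^*\cdot\tilde F_1=1$, $M^*\cdot\tilde M_0=0$ and $F^*\cdot\tilde E_i=M^*\cdot\tilde E_i=0$. For the $[\Lambda_i]$, note that $a_iF^*+b_iM^*=\pi^*(a_iF+b_iM)$ is orthogonal to every $\tilde E_\ell$; a short computation then reduces the relevant pairings to
$$\Lambda_i\cdot\tilde F_1=b_i-\sum_{j}\mathrm{mult}_{p_j}(\varphi_i)\,\mathrm{mult}_{p_j}(\varphi_{F_1}),\qquad \Lambda_i\cdot\tilde M_0=a_i-\sum_{j}\mathrm{mult}_{p_j}(\varphi_i)\,\mathrm{mult}_{p_j}(\varphi_{M_0}),$$
$$\Lambda_i\cdot\tilde E_\ell=\mathrm{mult}_{p_\ell}(\varphi_i)-\sum_{p_k\to p_\ell}\mathrm{mult}_{p_k}(\varphi_i),$$
the two $\delta$-contributions in the second one cancelling. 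By Noether's formula the subtracted sums are $(\varphi_i,\varphi_{F_1})_p=b_i$ and $(\varphi_i,\varphi_{M_0})_p=a_i$, so $\Lambda_i\cdot\tilde F_1=\Lambda_i\cdot\tilde M_0=0$; and the last pairing is the proximity excess of $\varphi_i$ at $p_\ell$, which equals $1$ if $\ell=i$ and $0$ otherwise because $\varphi_i$ is a curvetta at $p_i$ (its multiplicities satisfy the proximity equalities at every point of $\mathcal{C}_\nu$ save $p_i$, where the excess is $1$).

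These numbers say exactly that $\{[M^*],[F^*],[\Lambda_1],\dots,[\Lambda_n]\}$ is the dual basis of $\{[\tilde F_1],[\tilde M_0],[\tilde E_1],\dots,[\tilde E_n]\}$, with $[M^*]$ paired to $[\tilde F_1]$ and $[F^*]$ to $[\tilde M_0]$. To conclude I would invoke the standard duality for simplicial cones: writing an arbitrary class as $\alpha[F^*]+\beta[M^*]+\sum_i\gamma_i[\Lambda_i]$, its pairings with $\tilde M_0,\tilde F_1$ and the $\tilde E_\ell$ recover $\alpha,\beta$ and the $\gamma_\ell$, so it lies in $S_1^\vee(Z)$ if and only if all these coefficients are non-negative; hence $S_1^\vee(Z)=\mathrm{cone}([F^*],[M^*],[\Lambda_1],\dots,[\Lambda_n])$. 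I expect the only delicate point to be the last pairing, namely identifying $\Lambda_i\cdot\tilde E_\ell$ with the proximity excess and using the proximity equalities for the irreducible germ $\varphi_i$ (cf. \cite{Cas}) to evaluate it as $1$ or $0$; everything else is routine once the pairing is tabulated in the $E_j^*$-basis.
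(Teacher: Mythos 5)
Your proof is correct and follows essentially the same route as the paper's: both show that $[F^*],[M^*],[\Lambda_1],\dots,[\Lambda_n]$ form the dual basis of $\{[\tilde F_1],[\tilde M_0]\}\cup\{[E_i]\}_{i=1}^n$ with respect to the intersection pairing, using Noether's formula for the pairings with $\tilde F_1,\tilde M_0$ and the proximity equalities for the irreducible germ $\varphi_i$ to get $\Lambda_i\cdot \tilde E_\ell=\delta_{i\ell}$. The only difference is that you spell out the simpliciality of $S_1(Z)$ and the final simplicial-duality step, which the paper leaves implicit in its opening sentence (``it is enough to prove that \dots is the dual basis'').
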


\begin{proof}

It is enough to prove that $\{[F^*],[M^*]\}\cup \{[\Lambda_i]\}_{i=1}^n$ is the dual basis of $\{[\tilde{F}_1],[\tilde{M}_0]\}\cup\{[E_i]\}_{i=1}^n$ with respect to the intersection product.

Let $p_{i_{F_1}}$  be the last point in the configuration $\mathcal{C}_\nu$ of the valuation $\nu$ giving rise to $Z$  through which the strict transform of $F_1$ passes. Also, if $p$ belongs to $M_0$, we define $i_{M_0}$ such that $p_{i_{M_0}}$ is the last point of $\mathcal{C}_\nu$ through which the strict transform of $M_0$ passes; otherwise we define $i_{M_0}:=0$. Taking into account that $\varphi_i$ is analytically irreducible, the proximity equalities  \cite[Theorem 3.5.3]{Cas} show that $\Lambda_i\cdot E_j=\delta_{ij}$, where $\delta_{ij}$ denotes the Kronecker's delta. Also, for each $i\in \{1,2,\ldots,n\}$, it holds
$$\Lambda_i\cdot \tilde{F}_1=b_i-\sum_{j=1}^{\min\{i,i_{F_1}\}} \text{mult}_{p_j}(\varphi_i)=0,$$
and
$$\Lambda_i\cdot \tilde{M}_0=a_i-\sum_{j=1}^{\min\{i,i_{M_0}\}} \text{mult}_{p_j}(\varphi_i)=0,$$
where the summations with upper index equal to $0$ are defined to be $0$. Finally notice that $F^*\cdot \tilde{F}_1=0$, $F^*\cdot \tilde{M}_0=1$, $M^*\cdot \tilde{F}_1=1$, $M^*\cdot \tilde{M}_0=0$ and $F^*\cdot E_i=M^*\cdot E_i=0$ for all $i=1,2,\ldots,n$.  This concludes the proof.
\end{proof}

Recall that we are considering a special divisorial valuation  $\nu$ of $\mathbb{F}_\delta$ and the surface $Z$ that $\nu$ defines. The divisors $\Lambda_i, 1 \leq i \leq n$, defined in \eqref{Div_coneofcurvesregular}, will be useful in this section because, as we are going to prove, they satisfy nice properties.

\begin{lemma}\label{Lemma_2_casoespecial}
Let $\nu$ be a special divisorial valuation of $\mathbb{F}_\delta$. Then, with the above notation, it holds that $\Lambda_1^2 \geq 0$, and the inequality $\Lambda_i^2\geq 0$ for some index $i\in\{2,3,\ldots,n\}$ implies:
\begin{itemize}
\item[(a)] $\Lambda_i^2>0$, whenever $p_i$ is a  satellite point of the configuration $\mathcal{C}_\nu$.
\item[(b)] $\Lambda_{i-1}^2\geq 0$ and in case $\Lambda_{i-1}^2=0,$ the point $p_i$ is satellite and the point $p_{i-1}$ is free.
\end{itemize}
\end{lemma}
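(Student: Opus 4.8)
The plan is to reduce everything to an explicit formula for $\Lambda_i^2$ and then run an induction on $i$ driven by two recursions coming from the proximity structure of $\mathcal{C}_\nu$.

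First I would compute $\Lambda_i^2$ from the intersection data recorded in the proof of Proposition~\ref{Prop_generatorsofdualcone_casespecial}. Since $(F^*)^2=0$, $F^*\cdot M^*=1$, $(M^*)^2=\delta$, $F^*\cdot E_j^*=M^*\cdot E_j^*=0$ and $E_j^*\cdot E_k^*=-\delta_{jk}$, expanding \eqref{Div_coneofcurvesregular} gives
$$\Lambda_i^2=\delta b_i^2+2a_ib_i-\sum_{j=1}^i \mathrm{mult}_{p_j}(\varphi_i)^2,$$
and by Noether's formula the last sum equals $(\varphi_i,\varphi_i)_p$. For $i=1$ one has $\mathrm{mult}_{p_1}(\varphi_1)=1$, $b_1=(\varphi_1,\varphi_{F_1})_p=1$ and $a_1\in\{0,1\}$ (it is $1$ exactly when $p\in M_0$), so $\Lambda_1^2=\delta+2a_1-1$, which equals $\delta+1$ when $p\in M_0$ and $\delta-1$ (with $\delta\geq 1$) otherwise; in all cases $\Lambda_1^2\geq 0$, settling the first assertion.

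Next I would record how $\Lambda_i$ is built from earlier divisors, using the curvette proximity equalities $\mathrm{mult}_{p_j}(\varphi_i)=\sum_{p_k\to p_j}\mathrm{mult}_{p_k}(\varphi_i)$ for $j<i$ and $\mathrm{mult}_{p_i}(\varphi_i)=1$ (valid because $\varphi_i$ is transversal to $E_i$). If $p_i$ is \emph{free} these force $\mathrm{mult}_{p_j}(\varphi_i)=\mathrm{mult}_{p_j}(\varphi_{i-1})$ for $j<i$, whence
$$\Lambda_i=\Lambda_{i-1}-E_i^*+\varepsilon_i,\qquad \varepsilon_i\in\{0,\,F^*,\,M^*\},$$
the term $M^*$ (resp.\ $F^*$) occurring exactly when $p_i$ lies on the strict transform of $F_1$ (resp.\ of $M_0$); if $p_i$ is \emph{satellite}, proximate to $p_{i-1}$ and to the unique earlier point $p_\ell$, the multiplicities add, and one gets $\Lambda_i=\Lambda_{i-1}+\Lambda_\ell-E_i^*$. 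Squaring, with $\Lambda_{i-1}\cdot E_i^*=\Lambda_\ell\cdot E_i^*=0$, yields $\Lambda_i^2=\Lambda_{i-1}^2-1$ when $p_i$ is free with $\varepsilon_i=0$, a positive correction when $\varepsilon_i\neq 0$, and
$$\Lambda_i^2=\Lambda_{i-1}^2+\Lambda_\ell^2-1+2\,\Lambda_{i-1}\cdot\Lambda_\ell$$
in the satellite case. For item (b), if $p_i$ is free with $\varepsilon_i=0$ then $\Lambda_{i-1}^2=\Lambda_i^2+1\geq 1>0$; if $\varepsilon_i\neq 0$ then $p_{i-1}$ lies on the same section or fiber, so one of $a_{i-1},b_{i-1}$ equals the full sum $\sigma=\sum_j\mathrm{mult}_{p_j}(\varphi_{i-1})$, and the elementary inequality $\sigma^2\geq\sum_j\mathrm{mult}_{p_j}(\varphi_{i-1})^2$ (with $\delta\geq 1$, the non-increasing multiplicity sequence, and $a_{i-1},b_{i-1}\geq 0$) forces $\Lambda_{i-1}^2\geq 0$. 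The equality clause is then clean: if $\Lambda_{i-1}^2=0$ then $p_{i-1}$ must be free (otherwise item (a) at index $i-1$ gives $\Lambda_{i-1}^2>0$) and $p_i$ cannot be free (the free recursions just listed would force $\Lambda_{i-1}^2>0$), so $p_i$ is satellite.

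The main obstacle is item (a), the strict positivity at a satellite point. Iterating (b) downward shows $\Lambda_{i-1}^2,\Lambda_\ell^2\geq 0$, so the squared satellite recursion reduces the whole matter to the sharp estimate $\Lambda_{i-1}\cdot\Lambda_\ell\geq 1$, which then gives $\Lambda_i^2\geq 1$. I would prove this cross-term bound directly from the fact that $p_\ell$ is an ancestor of $p_{i-1}$ on the geodesic of the configuration, expanding
$$\Lambda_{i-1}\cdot\Lambda_\ell=a_{i-1}b_\ell+a_\ell b_{i-1}+\delta b_{i-1}b_\ell-(\varphi_{i-1},\varphi_\ell)_p$$
and controlling $(\varphi_{i-1},\varphi_\ell)_p=\sum_j\mathrm{mult}_{p_j}(\varphi_{i-1})\mathrm{mult}_{p_j}(\varphi_\ell)$ against the products of values. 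The bookkeeping separating the three positions of $p_{i-1}$ relative to $\tilde F_1$ and $\tilde M_0$ is where the care is needed, and the degenerate boundary $\delta=1$ with $p$ general (where $\Lambda_1^2=0$) has to be inspected by hand.
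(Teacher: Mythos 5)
Your preparatory machinery is correct, and it is genuinely different from the paper's proof (which works throughout with the arithmetic of the maximal contact values $\{\overline{\beta}_j\}$): the expansion $\Lambda_i^2=2a_ib_i+\delta b_i^2-\sum_{j\le i}\mathrm{mult}_{p_j}(\varphi_i)^2$, the value of $\Lambda_1^2$, the free-point recursion $\Lambda_i=\Lambda_{i-1}-E_i^*+\varepsilon_i$, and the satellite identity $\Lambda_i=\Lambda_{i-1}+\Lambda_\ell-E_i^*$ all check out (the latter because both sides have the same intersection numbers against the basis $\{[\tilde{F}_1],[\tilde{M}_0]\}\cup\{[E_j]\}_{j=1}^n$ and the form is unimodular, using that a satellite point never lies on the strict transform of the smooth curves $F_1$, $M_0$). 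But the proposal fails at the crux: item (b) when $p_i$ is \emph{satellite}. Your argument for (b) covers only free $p_i$ (the two cases for $\varepsilon_i$); nothing is said about deducing $\Lambda_{i-1}^2\geq 0$ from $\Lambda_i^2\geq 0$ when $p_i$ is satellite, and your recursion cannot deliver it, since $\Lambda_i^2=\Lambda_{i-1}^2+\Lambda_\ell^2-1+2\Lambda_{i-1}\cdot\Lambda_\ell$ goes the wrong way: a priori a negative $\Lambda_{i-1}^2$ could be offset by a large cross term. Worse, this missing case is not peripheral, because it is exactly what your item (a) rests on: when $p_i$ is satellite, the very first step of ``iterating (b) downward'' is (b) at the index $i$ itself, i.e.\ the satellite case. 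So (a) needs (b)-satellite, and (b)-satellite is nowhere proved; the scheme is circular with no entry point. The paper breaks precisely this circle by proving (a) first and independently, through divisibility ($a_n$ and $b_n$ are multiples of $\overline{\beta}_0$ or $\overline{\beta}_1$ while $\gcd(e_{g-1},\overline{\beta}_g)=1$), and then proving (b) in the satellite case using (a) together with the relation \eqref{eq_lemma2} between the maximal contact values of the valuations given by $E_{n-1}$ and $E_n$; your proximity bookkeeping has no substitute for either ingredient.

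Two further remarks. The step you single out as the main obstacle, $\Lambda_{i-1}\cdot\Lambda_\ell\geq 1$, is actually the easy part and needs no positional bookkeeping: once $\Lambda_{i-1}^2\geq 0$ and $\Lambda_\ell^2\geq 0$ are known, the signature $(1,n+1)$ of the intersection form on $\mathrm{Pic}(Z)$ (Hodge index) gives $\Lambda_{i-1}\cdot\Lambda_\ell\geq 0$ with equality only for proportional classes; these classes are not proportional (the coefficient of $E_{i-1}^*$ is $-1$ in $\Lambda_{i-1}$ and $0$ in $\Lambda_\ell$), so integrality yields the bound. Finally, your parenthetical claim in the equality clause (``if $p_i$ is free the recursions force $\Lambda_{i-1}^2>0$'') is false when $\varepsilon_i\neq 0$: for $\delta=1$, $p$ a general point and $p_2$ on $\tilde{F}_1$ (a special valuation, by the remark following Definition \ref{SG}), one has $\Lambda_1^2=0$, $\Lambda_2^2=2$ and $p_2$ free; your $\varepsilon_i\neq 0$ argument only gives $\Lambda_{i-1}^2\geq 0$, not strict positivity. (This boundary configuration is in fact delicate for the statement itself and is also passed over in the paper's proof of (b), but a correct write-up cannot assert that the free case always yields strict positivity.)
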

\begin{proof}
The self-intersection of the divisor $\Lambda_1$ satisfies $\Lambda_1^2=1+\delta$ when $p_1$ is a special point and also when $\delta=0$. Otherwise, $\Lambda_1^2=\delta-1$.

For proving the remaining statements, we can assume, without loss of generality, that $i=n\geq 2$.

We are going to prove the result when $p_1$ is a special point. Otherwise, the proof is the same after setting $\delta=0$ or $a_n=0$.

We start with the proof of Statement $(a)$ for which we will use some properties of the set of maximal contact values of $\nu$, $\{\overline{\beta}_j\}_{j=0}^{g+1}$, (see \cite{Spiv} and \cite[Section 1.5]{DelGalNun}). We divide this proof in two cases.

{\it Case 1(a)}: $g>1$. Reasoning by contradiction and taking into account that the point $p_n$ is satellite, we get that
$$
0= \Lambda_n^2 = 2a_nb_n+\delta b_n^2 - e_{g-1}\overline{\beta}_{g} = e_{g-1}\left[\dfrac{2a_nb_n+\delta b_n^2}{e_{g-1}} -\overline{\beta}_g\right],
$$
where $e_{g-1}:=\gcd(\overline{\beta}_0,\overline{\beta}_1,\ldots,\overline{\beta}_{g-1})$. Since both $a_n$ and $b_n$ are either a multiple of $\overline{\beta}_0$ or $\overline{\beta}_1,$ the first addend in the brackets is a multiple of $e_{g-1}$, which gives a contradiction because $\gcd(e_{g-1},\overline{\beta}_g)=1$.

{\it Case 2(a)}: $g=1$. We distinguish three sub-cases: {\it The values $a_n$ and $b_n$ are divisible by $\overline{\beta}_0$.} Then $e_{g-1}=e_0=\overline{\beta}_0$ and the proof follows as above. {\it The value $a_n$ satisfies $a_n=\overline{\beta}_1$.} Then $\Lambda_n^2=\overline{\beta_0}(2\overline{\beta}_1 +\overline{\beta}_0\delta-\overline{\beta}_1)> 0.$ {\it Otherwise.} Then $\Lambda_n^2=\overline{\beta_1}(2\overline{\beta}_0 +\overline{\beta}_1\delta-\overline{\beta}_0)> 0$, which concludes the proof of Statement $(a).$

Now we prove Statement $(b)$. Again we can suppose that $i=n$. We also assume that the point $p_n$ is satellite because, otherwise, $\Lambda_{n-1}^2>0$ by Noether's formula. Denote by $\widehat{\nu}$ the divisorial valuation defined by the divisor $E_{n-1}.$ Let $\{\widehat{\overline{\beta}}_j\}_{j=0}^{\widehat{g}+1}$ be the sequence of maximal contact values of $\widehat{\nu}$, set $\widehat{e}_{g-1}:=\gcd(\widehat{\overline{\beta}}_0,\widehat{\overline{\beta}}_1,\ldots,\widehat{\overline{\beta}}_{g-1})$ and $e:=\widehat{e}_{\widehat{g}-1}/e_{\widehat{g}-1}.$ Consider two cases with two sub-cases.

{\it Case 1(b)}: $g=\hat{g}$. Assume first that $g>1$. From the following equality, which is proved in \cite[Lemma 2]{GalMon},
\begin{equation}\label{eq_lemma2}
|\widehat{\overline{\beta}}_g-e\overline{\beta}_g|=\dfrac{1}{e_{g-1}},
\end{equation}
one can deduce that
\begin{equation}\label{Consq_eq_Lemma2}
-\dfrac{ e_{g-1}\widehat{\overline{\beta}}_g}{e} \geq - \dfrac{1}{e} -  e_{g-1}\overline{\beta}_g.
\end{equation}
In this case both valuations $\nu$ and $\widehat{\nu}$ are defined by satellite points, therefore $a_{n-1}=ea_{n}, b_{n-1}=eb_n,$  $\overline{\beta}_{g+1}=e_{g-1}\overline{\beta}_g$ and $\widehat{\overline{\beta}}_{g+1}=\widehat{e}_{g-1}\overline{\beta}_g$. As a consequence
\begin{align*}
\Lambda_{n-1}^2=e^2\left[ 2a_nb_n + \delta b_n^2 - \dfrac{e_{g-1}\widehat{\overline{\beta}}_g}{e}\right] \geq e^2 \left[ 2a_nb_n +\delta b_n^2  - \dfrac{1}{e} -  e_{g-1}\overline{\beta}_g \right]\\ =e^2\left[\Lambda_n^2-\dfrac{1}{e} \right] >0,
\end{align*}
where the first inequality is deduced from the inequality \eqref{Consq_eq_Lemma2} and the last one holds since $\Lambda_n^2 > e_{g-1} > 1/e$.

To conclude the proof in this case, it remains to study what happens when $g=1$. We consider the same subcases as above: The values $a_n$ and $b_n$ are both divisible by $\overline{\beta}_0$, then the fact $\Lambda_{n-1}^2>0$ can be proved as before.
The value $a_n$ equals $\overline{\beta}_1$, then
$$
\Lambda^2_{n-1}= 2\widehat{\overline{\beta}}_0\widehat{\overline{\beta}}_1 +\delta\widehat{\overline{\beta}}_0^2 - \widehat{\overline{\beta}}_2 =\widehat{\overline{\beta}}_0(2\widehat{\overline{\beta}}_1 + \delta\widehat{\overline{\beta}}_0 -\widehat{\overline{\beta}}_1)=\widehat{\overline{\beta}}_0(\widehat{\overline{\beta}}_1 + \delta\widehat{\overline{\beta}}_0)>0.
$$
Otherwise, then $\Lambda^2_{n-1}=\widehat{\overline{\beta}}_1(\widehat{\overline{\beta}}_0 + \delta\widehat{\overline{\beta}}_1)>0.$

{\it Case 2(b):} $\widehat{g}=g-1$. When $g>1,$ it holds
$$
 \widehat{\overline{\beta}}_{\hat{g}+1}=\dfrac{\overline{\beta}_{g+1}+2}{4}
$$
and thus
$$
\Lambda_{n-1}^2=\dfrac{1}{4}\left(2a_nb_n + \delta b_n^2 -\overline{\beta}_{g+1}-2\right)=\dfrac{1}{4}\Lambda_n^2-\dfrac{1}{2}\geq 0,
$$
because $\Lambda^2_n\geq 2$.

Finally we must assume that $g=1$ and, as above, when the values $a_n$ and $b_n$ are divisible by $e_0=2$, $\Lambda_{n-1}^2\geq 0$. When $a_n=\overline{\beta}_1$, $\Lambda_{n-1}^2 = \widehat{\overline{\beta}}_1+\delta \geq 0$, and otherwise,
$$
\Lambda_{n-1}^2=2\widehat{\overline{\beta}}_1+\delta\widehat{\overline{\beta}}_1^2-\widehat{\overline{\beta}}_1\geq 0,
$$
which concludes the proof.
\end{proof}

Next we introduce the concepts of non-positivity and negativity at infinity for special divisorial valuations. Afterwards a similar concept will be given for non-special valuations. We consider a Hirzebruch surface $\mathbb{F}_\delta$,  a closed point $p$ in $\mathbb{F}_\delta$ and denote by $R$ the local ring $\mathcal{O}_{\mathbb{F}_\delta,p}$.

\begin{definition}
\label{defnonposspecial}
Let $\nu$ be a special divisorial valuation of the fraction field of $R$ centered at $R$. The valuation $\nu$ is called {\it non-positive} (respectively, {\it negative}) {\it at infinity} whenever $\nu(h)\leq 0$ (respectively, $\nu(h)< 0$) for all $h\in\mathcal{O}_{\mathbb{F}_\delta}(\mathbb{F}_\delta\setminus (F_1\cup M_0))$ (respectively, $h\in\mathcal{O}_{\mathbb{F}_\delta}(\mathbb{F}_\delta\setminus (F_1\cup M_0))$, $h \notin k$).
\end{definition}

We devote the remaining of this section to state two results, Theorems \ref{Th1_caso_especial} and \ref{Th2_caso_especial}, which give several equivalent conditions to the fact that a special divisorial valuation of a Hirzebruch surface is  non-positive or negative at infinity. We will use the divisor $\Lambda_n$ and the values $a_n$ and $b_n$, defined in Proposition \ref{Prop_generatorsofdualcone_casespecial}.

%$a_n=\nu (\varphi_{M_0})$ and $b_n = \nu (\varphi_{F_1})$, where $\varphi_{M_0}$ (respectively, $\varphi_{F_1}$) are the germ at $p$ given by the special section $M_0$ (respectively, the fiber defined by $p$, $F_1$) of  $\mathbb{F}_\delta$.

\begin{theorem}\label{Th1_caso_especial}
Let $\nu$ be a special divisorial valuation of the fraction field of $R$ centered at $R$.  Set $Z$ the surface that $\nu$ defines. Consider the divisor $\Lambda_n$ given in  \eqref{Div_coneofcurvesregular} and the inverse of the volume of $\nu$, $[\text{\emph{vol}}(\nu)]^{-1}$. Then, the following conditions are equivalent:
\begin{itemize}
\item[(a)] The valuation $\nu$ is non-positive at infinity.
\item[(b)] The divisor $\Lambda_n$ is nef.
\item[(c)] The inequality $2a_nb_n+b_n^2\delta\geq [\text{\emph{vol}}(\nu)]^{-1}$ holds.
\item[(d)] The cone of curves $NE(Z)$ is generated by the classes of the strict transforms on $Z$ of the fiber passing through $p$, the special section and the irreducible exceptional divisors associated with the map $\pi$ given by $\nu$.
\end{itemize}
\end{theorem}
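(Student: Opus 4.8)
The plan is to establish the chains (a)$\Leftrightarrow$(b), (b)$\Leftrightarrow$(c) and (c)$\Leftrightarrow$(d), organising everything around the single class $\Lambda_n$. First I would record the self-intersection formula: using $F^{*2}=0$, $F^*\cdot M^*=1$, $M^{*2}=\delta$, $E_i^*\cdot E_j^*=-\delta_{ij}$ together with Noether's formula $\nu(\varphi_n)=\sum_{j}\mathrm{mult}_{p_j}(\varphi_n)^2=\overline{\beta}_{g+1}=[\text{vol}(\nu)]^{-1}$, one gets
\[
\Lambda_n^2=2a_nb_n+\delta b_n^2-[\text{vol}(\nu)]^{-1}.
\]
Hence condition (c) is literally the inequality $\Lambda_n^2\ge 0$. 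This makes two implications immediate: (b)$\Rightarrow$(c), since a nef divisor has non-negative self-intersection; and (d)$\Rightarrow$(b), since if $\overline{NE}(Z)=S_1(Z)$ then its dual cone $P(Z)$ equals $S_1^\vee(Z)$, which contains $[\Lambda_n]$ by Proposition \ref{Prop_generatorsofdualcone_casespecial}.

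For (a)$\Leftrightarrow$(b) I would argue as follows. A nonzero $h\in\mathcal{O}_{\mathbb{F}_\delta}(\mathbb{F}_\delta\setminus(F_1\cup M_0))$ is a regular function on the affine chart complementary to $F_1\cup M_0$; if $C$ denotes the closure of its zero locus, an effective divisor of degree $(a,b)$ with no component equal to $F_1$ or $M_0$, then $\mathrm{div}(h)=C-(a+\delta b)F_1-bM_0$. Using $\nu(\varphi_{F_1})=b_n$ and $\nu(\varphi_{M_0})=a_n$ and the expansion $\tilde C=aF^*+bM^*-\sum_j\mathrm{mult}_{p_j}(\varphi_C)E_j^*$, a direct computation yields the identity $\nu(h)=-\Lambda_n\cdot\tilde C$. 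Since $\Lambda_n\cdot E_j=\delta_{nj}\ge 0$ and $\Lambda_n\cdot\tilde F_1=\Lambda_n\cdot\tilde M_0=0$, the only irreducible curves on $Z$ for which $\Lambda_n\cdot(-)\ge 0$ is not automatic are the strict transforms $\tilde C$ of irreducible curves $C\neq F_1,M_0$; for these the identity shows $\Lambda_n\cdot\tilde C\ge0$ is equivalent to $\nu(h)\le0$. As every such $C$ is the zero locus of some $h$, this gives $\Lambda_n$ nef $\Leftrightarrow$ $\nu$ non-positive at infinity.

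The core of the theorem is (c)$\Rightarrow$(b), which I would prove by induction on $n$ via the following reduction. Because $\{[F^*],[M^*]\}\cup\{[\Lambda_i]\}_{i=1}^n$ is the basis dual to $\{[\tilde M_0],[\tilde F_1]\}\cup\{[E_i]\}_{i=1}^n$, every irreducible curve $\Gamma$ on $Z$ satisfies $[\Gamma]=(\Gamma\cdot\tilde M_0)[F^*]+(\Gamma\cdot\tilde F_1)[M^*]+\sum_{i}(\Gamma\cdot E_i)[\Lambda_i]\in S_1^\vee(Z)$ (a non-negative combination, as distinct irreducible curves meet non-negatively). Consequently, since $\Lambda_n\cdot F^*=b_n\ge0$ and $\Lambda_n\cdot M^*=a_n+\delta b_n\ge0$ always hold, $\Lambda_n$ pairs non-negatively with every generator of $S_1^\vee(Z)$, hence with every $[\Gamma]$, as soon as $\Lambda_n\cdot\Lambda_i\ge 0$ for all $i\le n$; that is, $\Lambda_n$ is nef. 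The case $i=n$ is exactly (c). For $i<n$, Lemma \ref{Lemma_2_casoespecial}(b) propagates $\Lambda_n^2\ge0$ downward to $\Lambda_i^2\ge 0$, so the truncation $\nu_i$ of $\nu$ determined by $E_i$ (again special) satisfies condition (c) on $Z_i$; by the inductive hypothesis its top class $\Lambda_i^{(i)}$ (the divisor \eqref{Div_coneofcurvesregular} for $\nu_i$) is nef on $Z_i$, and its pullback under $Z\to Z_i$, which equals $\Lambda_i$, is therefore nef on $Z$. On the other hand $\Lambda_n$ is pseudo-effective: one has $\Lambda_n^2\ge0$ by (c) and $\Lambda_n\cdot H>0$ for a suitable ample $H=\pi^*H_0-\varepsilon\sum_j E_j^*$ with $\varepsilon$ small, so $\Lambda_n$ lies in the closed positive cone, hence in $\overline{NE}(Z)$. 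A nef class meets a pseudo-effective one non-negatively, so $\Lambda_n\cdot\Lambda_i\ge0$ for $i<n$ as well, proving (b). Finally (c)$\Rightarrow$(d) follows by assembling: applying the now-established implication (c)$\Rightarrow$(b) to every truncation $\nu_i$ and pulling back shows all the $\Lambda_i$, and also the nef classes $F^*,M^*$, are nef, so $S_1^\vee(Z)\subseteq P(Z)$, i.e.\ $\overline{NE}(Z)\subseteq S_1(Z)$; the reverse inclusion is clear since $\tilde F_1,\tilde M_0,E_i$ are effective.

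The main obstacle is the simultaneous nefness of all the $\Lambda_i$ concealed in (c)$\Rightarrow$(b): a priori one must control $\Lambda_n\cdot\Gamma$ for every irreducible curve $\Gamma$, and the crux is to replace this infinite test by the finitely many inequalities $\Lambda_n\cdot\Lambda_i\ge 0$ and then settle those without computing valuations of arbitrary curves, using instead the pseudo-effectivity of $\Lambda_n$ against the inductively nef $\Lambda_i$. The descending induction is viable only because Lemma \ref{Lemma_2_casoespecial} guarantees that $\Lambda_i^2\ge 0$ persists down the configuration, its part (a) (strict positivity at satellite points) being what prevents the inductive self-intersections from dropping below zero; this is the Hirzebruch-surface counterpart of the $\mathbb{P}^2$ analysis of \cite{GalMon}. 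I expect the delicate points to be the bookkeeping that a truncation of a special valuation is again special, and that the data $a_i,b_i,\mathrm{mult}_{p_j}(\varphi_i)$ are intrinsic to the germ $\varphi_i$ (so that $\Lambda_i$ is genuinely the pullback of $\Lambda_i^{(i)}$), together with the verification that $\Lambda_n\cdot H>0$ for the chosen ample class.
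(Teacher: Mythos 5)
Your proposal is correct; the implications (a)$\Leftrightarrow$(b), (b)$\Rightarrow$(c) and (d)$\Rightarrow$(b) coincide with the paper's arguments (your identity $\nu(h)=-\Lambda_n\cdot\tilde{C}$ is exactly the paper's coordinate computation recast in divisor-theoretic form), but your treatment of the core implication is genuinely different. The paper proves (c)$\Rightarrow$(d) directly: after Lemma \ref{Lemma_2_casoespecial} yields $\Lambda_i^2\geq 0$ for all $i$, it considers the positive cone $A(Z)=\{[D]\in \text{Pic}_\mathbb{Q}(Z) \ : \ [D]^2\geq 0 \text{ and } [H]\cdot [D]\geq 0\}$, invokes the Hodge index theorem to get the self-duality $A(Z)^\vee=A(Z)$, and obtains the sandwich $S_1^\vee(Z)\subseteq A(Z)\subseteq S_1(Z)$; combined with $A(Z)\subseteq\overline{NE}(Z)$ (\cite[Lemma 1.20]{KolMor}) and the same decomposition of irreducible curves that you use, this gives $NE(Z)=S_1(Z)+S_1^\vee(Z)=S_1(Z)$, and (b) then follows by duality. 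You instead prove (c)$\Rightarrow$(b) first, by induction on $n$: the truncations $\nu_i$ stay special, $\Lambda_i$ is the pullback of the corresponding divisor on $Z_i$ and hence nef by the inductive hypothesis, while $\Lambda_n$ is pseudo-effective by the same \cite[Lemma 1.20]{KolMor}, so $\Lambda_n\cdot\Lambda_i\geq 0$; the dual-basis expansion of an arbitrary irreducible curve then gives nefness, and (d) follows by dualizing $S_1^\vee(Z)\subseteq P(Z)$. The paper's route is shorter --- convexity and self-duality of $A(Z)$ settle all the pairings $\Lambda_n\cdot\Lambda_i$ simultaneously, with no need to discuss truncations --- whereas your route trades the Hodge-index self-duality for the more elementary nef-versus-pseudoeffective pairing plus an induction, and it establishes as an input what the paper records only afterwards as a consequence (Remark \ref{34}): that each truncated valuation is special and non-positive at infinity with $\Lambda_i$ nef. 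The two bookkeeping points you flag do check out: specialness passes to truncations because strict transforms of integral curves in $|M|$ can only lose self-intersection in passing from $Z_i$ up to $Z$, and $a_i$, $b_i$ and $\text{mult}_{p_j}(\varphi_i)$ depend only on the germ $\varphi_i$, so $\Lambda_i$ is indeed the pullback of $\Lambda_i^{(i)}$ and nefness is preserved under pullback.
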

\begin{proof}
Our first step is to prove the equivalence between items (a) and (b), and we start by proving that Item (b) implies Item (a). We assume firstly here that $\delta>0$ and $p=p_1$ is a special point. Without loss of generality, suppose that the special point $p$ has coordinates $(1:0;1,0).$  The point $p$ belongs to the fiber $F_1$ whose equation is $X_1=0$, and the special section $M_0$ is defined by the equation $Y_1=0$. Set $U_{00}$ the affine open set of $\mathbb{F}_\delta$ given by $X_0\neq 0$ and  $Y_0\neq 0$, whose associated affine coordinates are $\{u,v\}=\big\{\frac{X_1}{X_0},\frac{X_0^\delta Y_1}{Y_0}\big\}$. Consider also the affine open set of $\mathbb{F}_\delta$, $U_{11}$, defined by $X_1\neq 0$ and $Y_1\neq 0$, with coordinates $\{x,y\}=\big\{\frac{X_0}{X_1},\frac{Y_0}{X_1^\delta Y_1}\big\}$. It holds that $p\in U_{00}$ and $F_1$ and $M_0$ have local equations $u=0$ and $v=0$, respectively. Denote by $\mathcal{P}$ the set of non-constant functions in $\mathcal{O}_{\mathbb{F}_\delta}(U_{11})$ (up to multiplication by a nonzero element of $k$) such that neither $x$ nor $y$ divide them. In terms of the coordinates $\{u,v\},$ $f\in\mathcal{P}$ can be expressed as
\begin{equation}\label{Ecu_th1_casoespecial}
f(x,y)=f(1/u,1/u^\delta v)=\dfrac{h_f(u,v)}{u^{\deg_1(h_f) +\delta \deg_2(h_f)}v^{\deg_2(h_f)}},
\end{equation}
where $h_f(u,v)\in\mathcal{O}_{\mathbb{F}_\delta}(U_{00}).$ The bi-homogeneous polynomial $$X_0^{\deg_1(h_f)}Y_0^{\deg_2(h_f)}\cdot h_f\left(\frac{X_1}{X_0},\frac{X_0^\delta Y_1}{Y_0}\right)$$ defines a curve $C_f$ on the surface $\mathbb{F}_\delta$ of degree $(\deg_1(h_f),\deg_2(h_f))$ and, if $F'$ and $M'$ are the fiber and the section on $\mathbb{F}_\delta$ with equations $X_0=0$ and $Y_0=0,$ it holds that the map $f\to C_f$ defines a one-to-one correspondence between $\mathcal{P}$ and the set of the curves on $\mathbb{F}_\delta$ containing no curve in $\{F_1,F',M_0,M'\}$ as a component. Now, the condition $\Lambda_n$ nef and Equality \eqref{Ecu_th1_casoespecial} show that
\begin{align*}
0\leq & \,\Lambda_n\cdot C_f=\Lambda_n\cdot \left[\deg_1(h_f)F^*+\deg_2(h_f)M^*-\sum_{i=1}^n\mbox{mult}_{p_i}(h_f)E_i^*\right]\\
=&-[-(\deg_1(h_f)+\deg_2(h_f)\delta )\nu (u) - \deg_2(h_f)\nu (v) + \nu (h_f)]= -\nu (f).
\end{align*}
So, to finish the proof of Item (a) in this case ($p$ is a special point), it only remains to assume that either $x$ or $y$ or both are factors of $f$. Then the proof follows from the existence of non-negative integers $\alpha,\beta$ with $\alpha+\beta\neq 0$ and $f_1\in\mathcal{P}$ such that
$$
\nu (f)=\nu (x^\alpha y^\beta f_1)= -(\alpha+\beta \delta) \nu(u)-\beta\nu(v) + \nu (f_1)\leq 0.
$$

If $\delta=0$ the proof is analogous, and the non-positivity of $\nu$ for the case when $p$ is a general point can be proved in a similar way after assuming that $p$ has coordinates $(0:1;0,1)$ and considering local coordinates $\{u,v\}=\big\{\frac{X_0}{X_1},\frac{Y_0}{X_1^\delta Y_1}\big\}$ in the affine open set $U_{11}$ and $\{x,y\}=\big\{\frac{X_1}{X_0},\frac{Y_0}{X_0^\delta Y_1}\big\}$ in $U_{01}$.

Now we are going to prove that Item $(a)$ implies Item $(b)$. Assume by contradiction that the divisor $\Lambda_n$ is not  nef and, therefore, that there exists an effective divisor $C$ such that $\Lambda_n\cdot C<0$. This implies that, with the above notation, if $p$ is a special point  --or $p\in\mathbb{F}_0$--, (respectively, $p$ is a general point), then there exists $f\in\mathcal{O}_{\mathbb{F}_\delta}(U_{11})$ (respectively, $f\in\mathcal{O}_{\mathbb{F}_\delta}(U_{01})$) such that $-\nu(f)=\Lambda_n\cdot C<0$, a contradiction.

The fact that Item (b) implies Item (c) follows easily from previous computations given in the proof of Lemma \ref{Lemma_2_casoespecial}.

Let us prove that Item (d) can be deduced from Item (c). Fix any ample divisor $H$ on the surface $Z$ and consider the set
$$
A(Z):=\{[D]\in \text{Pic}_\mathbb{Q}(Z) \ |\ [D]^2\geq 0 \mbox{ and }[H]\cdot [D] \geq 0\}.
$$
Recall that the above defined cone $S_1(Z)$ is generated by the classes $[\tilde{F}_1]$, $[\tilde{M}_0]$ and $[E_i]$, $1 \leq i \leq n$, and we are going to prove that
\begin{equation}
\label{Hh}
\overline{NE}(Z)=S_1(Z)+S_1^\vee(Z)=NE(Z)
\end{equation}
and
\begin{equation}
\label{Ii}
S_1^\vee(Z)\subseteq A(Z)\subseteq S_1(Z)
\end{equation}
hold, which shows Item (d). Our Hypothesis (c) means that $\Lambda_n^2 \geq 0$ and by Lemma \ref{Lemma_2_casoespecial}, one has that $\Lambda_i^2\geq 0,$ $1\leq i\leq n-1$. Proposition \ref{Prop_generatorsofdualcone_casespecial} proves the first inclusion in (\ref{Ii}) and the last one follows from the first one and the equality $A(Z)^\vee=A(Z)$ (that holds taking into account the Hodge index theorem \cite[Theorem 1.9]{Har}). It remains to prove the chain of equalities (\ref{Hh}). For a start, notice that $A(Z) \subseteq \overline{NE} (Z)$ by \cite[Lemma 1.20]{KolMor}.
%Indeed, if $[D]$ belongs to the topological interior of $A(Z)$, then $[D]^2>0$ and by \cite[Lemma 1.21]{KolMor} some complete linear system $|mD|$ or $|-mD|$, $m \in \mathbb{Z}$, $m$ large enough, is not empty. Now,  $-mD$ cannot be effective because $D\cdot A>0$ and then, $[D]$ is in the cone of curves $NE(Z).$
Thus $S_1^\vee(Z) \subseteq \overline{NE}(Z)$. Now, if $[C]$ is the class of an irreducible curve on $Z$   and it is not one of the given generators of $S_1(Z)$, then $[C]\in S_1^\vee(Z)$ because otherwise $[C]\cdot [D]<0$ for $[D]\in S_1(Z)$ and $C$ and $D$ would have a common component. Therefore we have proved the chain (\ref{Hh}) with inclusions $\supseteq$ instead of equalities. Taking topological closures
%and applying \cite[Theorem 19.1]{Rock},
we deduce that (\ref{Hh}) holds.

Finally Item (d) implies Item (b) by Proposition \ref{Prop_generatorsofdualcone_casespecial}, which concludes the proof.
\end{proof}

\begin{remark}\label{remmm}
The Hirzebruch surface $\mathbb{F}_1$ can be regarded as the projective space $\mathbb{P}^2$ with a point blown-up. If one considers the line at infinity $L$ in $\mathbb{P}^2$ and one regards $\mathbb{F}_1$ as the blow-up of $\mathbb{P}^2$ at a point of $L$, then it is not difficult to deduce that $M_0 = E_1$ and $F$ is a general divisor in $|\tilde{L}|$. As a consequence, Theorem \ref{Th1_caso_especial} allows us to provide equivalent conditions to the non-positivity of a valuation of $\mathbb{P}^2$ (see Section \ref{nonposneg}). In fact, our Theorem \ref{Th1_caso_especial} recovers Theorem 1 in \cite{GalMon} considering $\delta=1$ and $p_1$ a special point, and the above proof is an adaptation and extension to our more general situation of that of  \cite[Theorem 1]{GalMon}.
\end{remark}

\begin{remark}
\label{34}
Assume that $Z$ is a surface as in Theorem \ref{Th1_caso_especial} defined by a non-positive at infinity special valuation. Then, on the one hand, Lemma \ref{Lemma_2_casoespecial} and Theorem \ref{Th1_caso_especial} prove that the divisorial valuation $\nu_i$ defined by any exceptional divisor $E_i$ given by \eqref{Eq_sequencepointblowingups} is also special and non-positive at infinity. On the other hand, every divisor $\Lambda_i$, $1\leq i\leq n,$ is effective. Indeed, under these conditions, the expression of $\Lambda_i,$ in the basis of strict transforms $\{\tilde{F}_1,\tilde{M}_0\} \cup \{E_j\}_{j=1}^n$, is
$$
\Lambda_i=(\Lambda_i\cdot M^*)\tilde{F}_1+(\Lambda_i\cdot F^*)\tilde{M}_0 + \sum_{j=1}^n(\Lambda_i\cdot\Lambda_j)E_j,
$$
which is effective because $\Lambda_i$ is nef.
\end{remark}

We conclude this section by stating a characterization result for negative at infinity special valuations of Hirzebruch surfaces. Argumenting as in Remark \ref{remmm}, one can see that our result also proves \cite[Theorem 2]{GalMon}.

\begin{theorem}\label{Th2_caso_especial}
Let $\nu$ (respectively, $Z$, $\Lambda_n$) a divisorial valuation (respectively, a surface, a divisor on $Z$) as in Theorem \ref{Th1_caso_especial}. Then, the following conditions are equivalent:
\begin{itemize}
\item[(a)] The valuation $\nu$ is negative at infinity.
\item[(b)] It holds that either $2a_nb_n + b_n^2 \delta > [\text{\emph{vol}}(\nu)]^{-1}$,  or  $\;2a_nb_n +b_n^2 \delta = [\text{\emph{vol}}(\nu)]^{-1}$ and the Iitaka dimension of the divisor $\Lambda_n$ vanishes.
\item[(c)] The inequality $\Lambda_n\cdot \tilde{C}>0$ holds for the strict transform on $Z,$ $\tilde{C},$ of any  curve $C$ on $\mathbb{F}_\delta,$ $C\neq F_1,M_0$.
\end{itemize}
\end{theorem}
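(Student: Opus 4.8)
The plan is to prove the two equivalences (a)$\Leftrightarrow$(c), via the function--curve dictionary underlying Theorem \ref{Th1_caso_especial}, and (b)$\Leftrightarrow$(c), via the Hodge index theorem together with an analysis of the Iitaka dimension. Throughout I may assume that $\Lambda_n$ is nef: negativity at infinity trivially implies non-positivity, while each of (b) and (c) already forces $\Lambda_n^2\geq 0$ --- for (c) because $\Lambda_n\cdot\Gamma\geq 0$ on every irreducible curve $\Gamma$, since $\Lambda_n\cdot\tilde{F}_1=\Lambda_n\cdot\tilde{M}_0=0$ and $\Lambda_n\cdot E_j=\delta_{nj}\geq 0$ --- so Theorem \ref{Th1_caso_especial} applies and $\Lambda_n$ is nef. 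I will also use the identity $\Lambda_n^2=2a_nb_n+\delta b_n^2-[\mathrm{vol}(\nu)]^{-1}$ extracted from the computation in Lemma \ref{Lemma_2_casoespecial} (recall $[\mathrm{vol}(\nu)]^{-1}=\overline{\beta}_{g+1}=e_{g-1}\overline{\beta}_g$), which recasts condition (b) as a statement about the sign of $\Lambda_n^2$ and, in the borderline case $\Lambda_n^2=0$, about whether $\kappa(\Lambda_n)=0$.

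For (a)$\Leftrightarrow$(c) I would extend the dictionary set up in the proof of Theorem \ref{Th1_caso_especial}. Working in the affine chart $U=\mathbb{F}_\delta\setminus(F_1\cup M_0)\cong\mathbb{A}^2$ with coordinates $\{x,y\}$, each irreducible curve $C\neq F_1,M_0$ on $\mathbb{F}_\delta$ has a defining regular function $f_C\in\mathcal{O}_{\mathbb{F}_\delta}(U)$, and that proof gives $\nu(f_C)=-\Lambda_n\cdot\tilde{C}$; the two curves $F'$ and $M'$ cut out by the coordinates are included, with $\nu(x)=-\Lambda_n\cdot\tilde{F}'=-b_n$ and $\nu(y)=-\Lambda_n\cdot\tilde{M}'$. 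Since $\mathcal{O}_{\mathbb{F}_\delta}(U)=k[x,y]$ is a unique factorisation domain, every non-constant $h$ factors as $h=c\prod_i f_{C_i}^{e_i}$ with $\sum_i e_i\geq 1$, whence $\nu(h)=-\sum_i e_i(\Lambda_n\cdot\tilde{C}_i)$. Therefore $\nu(h)<0$ for all non-constant $h$ if and only if $\Lambda_n\cdot\tilde{C}>0$ for every irreducible $C\neq F_1,M_0$, which is exactly (a)$\Leftrightarrow$(c); the general-point and special-point cases are covered by the two coordinate choices already employed in Theorem \ref{Th1_caso_especial}.

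For (b)$\Leftrightarrow$(c) I would first record, using Theorem \ref{Th1_caso_especial}(d), that every effective class expands with non-negative coefficients in the basis $\{[\tilde{F}_1],[\tilde{M}_0]\}\cup\{[E_i]\}_{i=1}^n$, which by Proposition \ref{Prop_generatorsofdualcone_casespecial} is dual to $\{[M^*],[F^*]\}\cup\{[\Lambda_i]\}$; in particular $\Lambda_n\cdot\tilde{C}$ is the $E_n$-coordinate of $[\tilde{C}]$, so $\Lambda_n\cdot\tilde{C}=0$ forces $[\tilde{C}]\in\Lambda_n^\perp=\langle[\tilde{F}_1],[\tilde{M}_0],[E_1],\ldots,[E_{n-1}]\rangle$. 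If $\Lambda_n^2>0$, the Hodge index theorem makes $\Lambda_n^\perp$ negative definite; yet for an irreducible $\tilde{C}$ with $C\neq F_1,M_0$ the non-negative expansion gives $\tilde{C}^2=\lambda(\tilde{C}\cdot\tilde{F}_1)+\mu(\tilde{C}\cdot\tilde{M}_0)+\sum_{i<n}\eta_i(\tilde{C}\cdot E_i)\geq 0$, a contradiction. Hence no orthogonal $\tilde{C}$ exists, (c) holds, and this matches the first alternative of (b).

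It remains to treat the borderline case $\Lambda_n^2=0$, where I must show that (c) is equivalent to $\kappa(\Lambda_n)=0$; this is the step I expect to require the most care. Now $\Lambda_n^\perp$ is only negative semidefinite, with radical $\langle\Lambda_n\rangle$, so the same computation upgrades any orthogonal irreducible $\tilde{C}$ (with $C\neq F_1,M_0$) to $\tilde{C}^2=0$, forcing $[\tilde{C}]=c[\Lambda_n]$ and, since $Z$ is rational so that $\mathrm{Pic}(Z)=\mathrm{Num}(Z)$, a linear equivalence $\tilde{C}\sim c\Lambda_n$. If $\kappa(\Lambda_n)=0$, then every multiple of $\Lambda_n$ has a one-dimensional space of sections, so the class of $\tilde{C}$ has a unique effective representative, which is supported on the rigid curves $\tilde{F}_1,\tilde{M}_0,E_i$ and cannot equal the irreducible $\tilde{C}$; thus no orthogonal $\tilde{C}$ exists and (c) holds. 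Conversely, if $\kappa(\Lambda_n)=1$, the Iitaka fibration attached to $|m\Lambda_n|$ for $m\gg 0$ furnishes a moving, hence non-rigid, irreducible curve orthogonal to $\Lambda_n$; being non-rigid it is none of the negative curves $E_i,\tilde{F}_1,\tilde{M}_0$, so it is a strict transform $\tilde{C}$ with $C\neq F_1,M_0$ and $\Lambda_n\cdot\tilde{C}=0$, and (c) fails. The delicate inputs here are the semiampleness/fibration structure of a nef non-big divisor and the verification that $\tilde{F}_1$, $\tilde{M}_0$ and the $E_i$ are genuinely rigid (each has negative self-intersection on $Z$), so that any moving fibre must descend to a curve on $\mathbb{F}_\delta$.
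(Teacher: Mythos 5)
Your proposal is correct in substance and rests on the same pillars as the paper's proof --- the function--curve dictionary from Theorem \ref{Th1_caso_especial}, the Hodge index theorem, the effectivity of $\Lambda_n$ (Remark \ref{34}), and the existence of moving members of $|m\Lambda_n|$ --- but it is organized differently, and the reorganization is worth noting. The paper proves the cycle (a)$\Rightarrow$(b)$\Rightarrow$(c)$\Rightarrow$(a): its (a)$\Rightarrow$(b) step extracts from $\dim|m\Lambda_n|>0$ a curve $\tilde{C}_f$, $f\in\mathcal{P}$, with $\nu(f)=-\Lambda_n\cdot\tilde{C}_f=0$; its (b)$\Rightarrow$(c) step shows a hypothetical orthogonal irreducible $\tilde{C}$ has $\tilde{C}^2=0$ and then uses strict convexity of the ball underlying $A(Z)$ (coincidence of the faces $[\tilde{C}]^\perp\cap NE(Z)$ and $[\Lambda_n]^\perp\cap NE(Z)$) to get $\tilde{C}$ linearly equivalent to a multiple of $\Lambda_n$, contradicting Remark \ref{34}; and (c)$\Rightarrow$(a) uses the dictionary. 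You instead prove (a)$\Leftrightarrow$(c) in one stroke via unique factorization in $\mathcal{O}_{\mathbb{F}_\delta}(U)\cong k[x,y]$, which is cleaner than the paper's detour of (a)$\Rightarrow$(c) through (b) and makes the valuation-theoretic content transparent; and you prove (b)$\Leftrightarrow$(c) by replacing the strict-convexity argument with negative (semi)definiteness of $\Lambda_n^\perp$ together with the observation that the non-negative coordinates of an effective class in the basis $\{[\tilde{F}_1],[\tilde{M}_0]\}\cup\{[E_i]\}$ (available from Theorem \ref{Th1_caso_especial}(d) and Proposition \ref{Prop_generatorsofdualcone_casespecial}) force $\tilde{C}^2\geq 0$ for any irreducible $\tilde{C}$ distinct from the generators. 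Both routes have the same mathematical content; yours isolates more explicitly where each hypothesis enters.

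One step needs repair, precisely the one you flagged. In the borderline case $\Lambda_n^2=0$, to show $\kappa(\Lambda_n)=1\Rightarrow\neg$(c) you invoke the Iitaka fibration and the ``semiampleness of a nef non-big divisor''. This is not available as stated: a nef divisor of Iitaka dimension one need not be semiample, the Iitaka fibration is a priori only a rational map (possibly defined only after blowing up $Z$), and general members of the mobile part of $|m\Lambda_n|$ need not be irreducible. None of that machinery is needed. Since $\dim|m\Lambda_n|\geq 1$ for some $m$, the class $m[\Lambda_n]$ contains infinitely many effective divisors, whereas it contains at most one divisor supported on $\tilde{F}_1\cup\tilde{M}_0\cup E_1\cup\cdots\cup E_n$, because those classes are linearly independent in $\text{Pic}_\mathbb{Q}(Z)$ and hence determine the coefficients; so some member of $|m\Lambda_n|$ has an irreducible component $\tilde{C}$ not among these curves, and since $\Lambda_n$ is nef and $\Lambda_n\cdot(m\Lambda_n)=0$, every component of that member is $\Lambda_n$-orthogonal, so (c) fails. (This is exactly the mechanism of the paper's own (a)$\Rightarrow$(b) step.) The same linear-independence remark, combined with Remark \ref{34}, is what justifies your unproved assertion in the $\kappa(\Lambda_n)=0$ case that the unique effective representative of the class is supported on those curves; there you should also pass to integer multiples $m\tilde{C}\sim m'\Lambda_n$, since the proportionality factor $c$ is only rational. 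Finally, a minor slip: your parenthetical $[\mathrm{vol}(\nu)]^{-1}=\overline{\beta}_{g+1}=e_{g-1}\overline{\beta}_g$ is valid only when $p_n$ is satellite (it is used under that hypothesis in Lemma \ref{Lemma_2_casoespecial}); what your argument actually uses, $\Lambda_n^2=2a_nb_n+\delta b_n^2-\overline{\beta}_{g+1}$, holds in general and is all you need.
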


\begin{proof}
For a start, we recall that the Iitaka dimension \cite{Iit} of a divisor $D$ on $Z$ is the maximum of the projective dimensions of the closures of the images of the rational maps defined by the complete linear systems $|nD|$, when $n$ runs over those positive integers $m$ such that $H^0(Z,\mathcal{O}_Z(mD)) \neq 0$.

We assume that $p_1$ is a special point. The other cases can be proved similarly. Assume also, without loss of generality, that $p_1$ has coordinates $(1:0;1,0)$ and consider the same notations as in the proof of Theorem \ref{Th1_caso_especial}.

We start by proving by contradiction that Statement (b) can be deduced from Statement (a). Hence, assume that (a) holds but (b) is false (what means, taking into account Theorem \ref{Th1_caso_especial}, that $\Lambda_n^2=0$ and $\dim |m\Lambda_n|>0$ for $m$ large enough). Therefore, there exists $f \in \mathcal{P}$ such that the class of $m\Lambda_n-\tilde{C}_f$ is effective for $m$ large enough. This implies that
$$0\leq \Lambda_n\cdot (m\Lambda_n-\tilde{C}_f)= m\Lambda_n^2 -\Lambda_n{\cdot}\tilde{C}_f=-\Lambda_n\cdot\tilde{C}_f.$$
Hence $0=\Lambda_n\cdot\tilde{C}_f=-\nu(f)$ because $\Lambda_n$ is nef (by Theorem \ref{Th1_caso_especial}), and this fact contradicts (a).

%there exists an irreducible curve $R$ of $\mathbb{F}_{\delta}$,

%Therefore, we can consider two global sections $s_0,s_1\in\Gamma(Z,\mathcal{O}_Z(m\Lambda_n))$ such that $\text{div}(s_0)_0,\text{div}(s_1)_0\in |m\Lambda_n|$. So $\alpha \text{div}(s_1)_0+\beta \text{div}(s_2)_0$ belongs to $|m\Lambda_n|$ for all $(\alpha :\beta)\in \mathbb{P}^1$ and, as a consequence, with the notation as in the proof of Theorem \ref{Th1_caso_especial}, a suitable election of $(\alpha:\beta)$ gives an effective divisor on $Z$ $\tilde{R}= \tilde{C}_f$, $f \in \mathcal{P}$, which has an irreducible component different from $\tilde{F}_1, \tilde{M}_0, \tilde{F'}$ and $\tilde{M}'$. Then, $m\Lambda_n-\tilde{R}$ is an effective divisor and $0\leq \Lambda_n\cdot (m\Lambda_n-\tilde{R})= m\Lambda_n^2 -\Lambda_n{\cdot}\tilde{R}=-\Lambda_n\cdot\tilde{R}$, which gives us the desired contradiction.

To prove that Statement (b) implies Statement (c), we reason again by contradiction and consider $C$ an integral curve on $\mathbb{F}_\delta$ different from $F_1$ and $M_0$, and such that $\Lambda_n\cdot \tilde{C}\leq 0$. In fact $\Lambda_n\cdot \tilde{C}= 0$ because, by Theorem \ref{Th1_caso_especial}, $\Lambda_n$ is nef. Let $\mathcal{F}$ be the face of the cone of curves of $Z$ spanned by the classes $[\tilde{F}_1],[\tilde{M}_0],$ $[E_1],\ldots,[E_{n-1}],$ that is, $ \mathcal{F}=[\Lambda_n]^\perp\cap NE(Z).$ It is clear that $[\tilde{C}] \in \mathcal{F}$ and, since the extremal rays of $NE(Z)$ are generated by classes of irreducible curves with negative self-intersection, $\tilde{C}^2=0$. $\tilde{C}$ is nef, so $[\tilde{C}]^\perp\cap NE(Z)$ is a face of $NE(Z)$ which contains $[\tilde{C}]$ and, then, it must coincide with $[\Lambda_n]^\perp\cap NE(Z)$. Indeed, this is a consequence of the fact that, in suitable coordinates, $A(Z)$ is the projective cone over an Euclidean ball $B$ (by the Hodge index theorem \cite[Theorem 1.9]{Har}) and $B$ is strictly convex.
 %$[\tilde{C}]^\perp\cap NE(Z)$ and $[\Lambda_n]^\perp\cap NE(Z)$ must be the same face, because, otherwise, as $[\Lambda_n]^\perp\cap NE(Z)$ is the tangent hyperplane of the nef cone $P(Z)$ at $[\tilde{C}]$, there would exist elements of the dual of $NE(Z)$ with negative self-intersection.
Then, $\tilde{C}$  is linearly equivalent to a multiple of $\Lambda_n$ and, by Remark \ref{34}, we get a contradiction.

To finish, the fact that Statement (c) implies Statement (a) can be proved as in Theorem \ref{Th1_caso_especial} when proving that Item (b) implies Item (a).

\end{proof}

\begin{remark}
The concepts of non-positivity (and negativity) at infinity of valuations of $\mathbb{P}^2$ and $\mathbb{F}_\delta$ are different. For instance, by \cite[Theorem 1]{GalMon} there is no non-positive at infinity divisorial valuation of $\mathbb{P}^2$ with maximal contact values $3$, $11$ and $122$; however Theorem \ref{Th1_caso_especial} proves the existence of non-positive at infinity valuations of $\mathbb{F}_\delta$ with those maximal contact values, when $\delta \geq 2$.
\end{remark}

\section{The sign at infinity of non-special valuations}
\label{gener}

This section gives results that characterize the non-positivity and negativity at infinity of non-special divisorial valuations (see Definition \ref{SG2}) of a Hirzebruch surface $\mathbb{F}_\delta$, $\delta >0$.

Notice that, when considering non-special valuations, there exists a unique irreducible section that is linearly equivalent to $M$ and whose strict transform on $Z$ has negative self-intersection. We will denote this section by $M_1$. Notice that its class gives an extremal ray of the cone $NE(Z)$.

For reaching our objectives, we need to describe the dual cone of the strongly convex cone of $\text{Pic}_\mathbb{Q}(Z)$, $S_2(Z)$, generated by the classes $[\tilde{F}_1],[\tilde{M}_0],[\tilde{M}_1]$ and $[E_i]$, $1 \leq i \leq n$. Our first result is a lemma which we will use in the forthcoming Proposition \ref{Prop_generatorsofdualcone_casegeneral} (that gives generators for the mentioned dual cone).

\begin{lemma}\label{Lemma_expressionM1_basistildeFM0}
The class of the strict transform of $M_1$ on $Z$, $[\tilde{M}_1]$, can be written as
\begin{align*}
[\tilde{M}_1]=\delta[\tilde{F}_1]+  [\tilde{M}_0]+ (\delta - 1)[E_1]+(\delta -2)[E_2]+\ldots\\ + [E_{\delta-1}]+ d_{\delta+1}[E_{\delta+1}]+\ldots + d_{n}[E_n],
\end{align*}
where $d_i \in \mathbb{Z}$ and $d_i\leq -1$ for all $i=\delta + 1,\delta + 2,\ldots, n$.
\end{lemma}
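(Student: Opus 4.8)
The plan is to recover the coordinates of $[\tilde{M}_1]$ in the basis $\{[\tilde{F}_1],[\tilde{M}_0]\}\cup\{[E_i]\}_{i=1}^n$ by pairing against its dual basis. The duality computation in Proposition~\ref{Prop_generatorsofdualcone_casespecial} is purely local and does not use the special/non-special distinction, so $\{[M^*],[F^*]\}\cup\{[\Lambda_i]\}_{i=1}^n$ remains the dual basis; consequently the coefficient of $[\tilde{F}_1]$ is $\tilde{M}_1\cdot M^*$, that of $[\tilde{M}_0]$ is $\tilde{M}_1\cdot F^*$, and that of $[E_i]$ is $\tilde{M}_1\cdot\Lambda_i$. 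Since $M_1\sim M$ we have $[M_1^*]=[M^*]$, so $[\tilde{M}_1]=[M^*]-\sum_{i=1}^n m_i[E_i^*]$ with $m_i:=\mathrm{mult}_{p_i}(\varphi_{M_1})$; using $(M^*)^2=\delta$, $F^*\cdot M^*=1$ and $E_i^*\cdot M^*=E_i^*\cdot F^*=0$ the first two coefficients come out at once as $\tilde{M}_1\cdot M^*=\delta$ and $\tilde{M}_1\cdot F^*=1$, matching the claimed expression.

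The next step is to describe $M_1$ geometrically. As an irreducible section, $M_1$ is smooth and meets each fiber once, so every $m_i\in\{0,1\}$; furthermore the points of $\mathcal{C}_\nu$ lying on $M_1$ form an initial segment $p_1,\dots,p_s$, because $p_i\in E_{i-1}$ can lie on the strict transform of $M_1$ only if $p_{i-1}$ does. Hence $\tilde{M}_1^2=\delta-s$, and the defining property $\tilde{M}_1^2<0$ forces $s\ge\delta+1$, so $M_1$ passes through $p_1,\dots,p_{\delta+1}$. Because $\nu$ is non-special, the Remark following Definition~\ref{SG} gives that $p_2$ does not lie on the strict transform of $F_1$ on $Z_1$ and that $p_1,\dots,p_j$ are free for some $j\ge\delta+1$; the first fact yields $\mathrm{mult}_{p_1}(\varphi_{F_1})=1$ and $\mathrm{mult}_{p_i}(\varphi_{F_1})=0$ for $i\ge2$, whence $b_i:=(\varphi_i,\varphi_{F_1})_p=\mathrm{mult}_{p_1}(\varphi_i)=1$ for every $i$.

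Finally I would compute the exceptional coefficients. Since $p$ is general, $a_i=(\varphi_i,\varphi_{M_0})_p=0$, so $\Lambda_i=b_iM^*-\sum_{j=1}^i\mathrm{mult}_{p_j}(\varphi_i)E_j^*$; expanding $\tilde{M}_1\cdot\Lambda_i$ and invoking Noether's formula gives $\tilde{M}_1\cdot\Lambda_i=\delta b_i-(\varphi_i,\varphi_{M_1})_p=\delta-(\varphi_i,\varphi_{M_1})_p$. As $\varphi_i$ and $\varphi_{M_1}$ both traverse the common (free) initial segment and then separate transversally at their last shared point, Noether's formula yields $(\varphi_i,\varphi_{M_1})_p=\min\{i,s\}$. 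Therefore the coefficient of $[E_i]$ equals $\delta-i$ for $1\le i\le\delta$, producing $\delta-1,\delta-2,\dots,1$ for $i=1,\dots,\delta-1$ and $0$ for $i=\delta$, while for $i\ge\delta+1$ it equals $\delta-\min\{i,s\}$, an integer at most $\delta-(\delta+1)=-1$; taking $d_i$ to be this integer completes the argument. I expect the geometric step to be the main obstacle: establishing that $M_1$ is smooth and meets exactly an initial segment of $\mathcal{C}_\nu$ of length at least $\delta+1$, together with extracting $b_i=1$ and the freeness of the first points from the non-special hypothesis. Once these are settled the remaining intersection-theoretic computations are routine.
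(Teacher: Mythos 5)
Your strategy is legitimate and genuinely different from the paper's: you recover the coefficients by pairing $[\tilde{M}_1]$ against the dual basis $\{[M^*],[F^*]\}\cup\{[\Lambda_i]\}_{i=1}^n$ of Proposition \ref{Prop_generatorsofdualcone_casespecial} (whose duality computation indeed never uses the special/non-special dichotomy), whereas the paper writes both sides in terms of total transforms and solves the resulting triangular linear system of undetermined coefficients. Your geometric preliminaries are also correct: $M_1$ is smooth, it meets $\mathcal{C}_\nu$ in an initial segment $p_1,\ldots,p_s$, $\tilde{M}_1^2=\delta-s<0$ forces $s\geq\delta+1$, non-speciality gives $p_2\notin\tilde{F}_1$ and hence $b_i=\mathrm{mult}_{p_1}(\varphi_i)$, and the coefficients of $[\tilde{F}_1]$ and $[\tilde{M}_0]$ come out as $\delta$ and $1$.

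The gap is in the last step: the claims $b_i=\mathrm{mult}_{p_1}(\varphi_i)=1$ and $(\varphi_i,\varphi_{M_1})_p=\min\{i,s\}$ are false in general. By Noether's formula, $(\varphi_i,\varphi_{M_1})_p=\sum_{j=1}^{\min\{i,s\}}\mathrm{mult}_{p_j}(\varphi_i)$, and these multiplicities are not $1$ once $p_i$ lies beyond a satellite point: the proximity equalities propagate the multiplicities of satellite points backwards into the free initial segment. Concretely, take $\delta=2$ and $\mathcal{C}_\nu=\{p_1,\ldots,p_5\}$ with $p_1,\ldots,p_4$ free, $M_1$ passing through exactly $p_1,p_2,p_3$ (so $s=3$), and $p_5$ satellite, proximate to $p_3$ and $p_4$. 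Then $\varphi_5$ has multiplicity sequence $(2,2,2,1,1)$, so $b_5=2\neq 1$ and $c_5:=(\varphi_5,\varphi_{M_1})_p=6\neq\min\{5,3\}$; the true coefficient is $d_5=\tilde{M}_1\cdot\Lambda_5=\delta b_5-c_5=-2$, not $\delta-\min\{5,3\}=-1$ as your formula predicts (this agrees with the paper's recursion: $d_3=-1$, $d_4=d_3=-1$, and $d_5=d_4+d_3=-2$ for the satellite point $p_5$). So your explicit coefficient values for $i\geq\delta+1$ are wrong, even though they accidentally satisfy the inequality here.

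Your framework can be repaired, because the pairing itself is right: the coefficient of $[E_i]$ is exactly $d_i=\tilde{M}_1\cdot\Lambda_i=\delta b_i-c_i$ with $c_i=\sum_{j=1}^{\min\{i,s\}}\mathrm{mult}_{p_j}(\varphi_i)$. When $p_1,\ldots,p_i$ are all free, $\varphi_i$ is smooth and your formula $d_i=\delta-\min\{i,s\}$ is valid; in particular $d_i=\delta-i$ for $i\leq\delta$. In the remaining case $i>s$, one uses that the maximal free initial segment $p_1,\ldots,p_{j_0}$ has $j_0\geq s$, that satellite points can only be proximate to points of index $\geq j_0-1$, and hence that $\mathrm{mult}_{p_1}(\varphi_i)=\cdots=\mathrm{mult}_{p_{j_0-1}}(\varphi_i)=b_i$; this gives $c_i\geq (s-1)b_i+1\geq\delta b_i+1$ since $s\geq\delta+1$, and therefore $d_i=\delta b_i-c_i\leq -1$. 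Without this multiplicity bookkeeping the key inequality $d_i\leq -1$ is not established by your argument.
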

\begin{proof}
It is clear that we can write  $[\tilde{M}_1]$ as
\begin{equation*}
[\tilde{M}_1]=d_{01}[\tilde{F}_1]+ d_{02} [\tilde{M}_0]+ d_1[E_1]+\ldots + d_{n}[E_n],
\end{equation*}
for some values $d_{01},d_{02},d_i\in\mathbb{Z}$, $1 \leq i \leq n$. Now, using the equalities
%the expression obtained for $[\tilde{M}_1]$, when using the following equalities:
\begin{equation*}
[\tilde{F}_1]=[F^*]-[E_1^*], \ [\tilde{M}_0]=\delta [F^*]+[M^*] \text{ and } \ [E_i]=[E_i^*]-\sum_{p_j\to p_i}[E_j^*],
\end{equation*}
we can compare the above expression with the equality $[\tilde{M}_1]=[M^*]- \sum_{j=1}^{i_{M_1}}[E_j^*],$ where $i_{M_1}$ is the index of the last point of the configuration of infinitely near points given by $\nu$, $\mathcal{C}_\nu$, through which $\tilde{M}_1$ goes. This gives rise to a system of linear equations in the variables $d_{01},d_{02},\{d_i\}_{i=1}^{n}$, whose first equations are
\begin{align*}
d_{01}-\delta d_{02}=0, \, d_{02}=1, \ d_1 - d_{01}=-1, \,  d_2 - d_1=-1,  \ldots, \, d_{\delta -1} - d_{\delta -2} = -1,\\ d_{\delta} - d_{\delta-1}=-1.
\end{align*}
These equations determine the values of $d_{01},d_{02},d_i$ for $i \in \{1,2, \ldots, \delta\}$, that coincide with those given in the statement. The fact that $d_i \leq -1$ for $i \geq \delta +1$ follows from considering the remaining equations and recalling that non-free points can only appear when $j > \delta + 1$.
\end{proof}

\begin{proposition}\label{Prop_generatorsofdualcone_casegeneral}
Let $Z$ be the surface given by a non-special divisorial valuation and $S_2(Z)$ the cone of $\text{Pic}_\mathbb{Q}(Z)$ defined before Lemma \ref{Lemma_expressionM1_basistildeFM0}. Then, the dual cone of $S_2(Z)$, $S_2^\vee(Z)$, is generated by the following of classes of divisors: $[F^*], [M^*]$, $\{[\Theta_i]\}_{i=1}^{\delta},$ $ \{ [\Delta_i] \}_{i=\delta+1}^{n}, $ $\{ [\Gamma_i] \}_{i=\delta+1}^n $ and $\{[\Upsilon_{ik}]\}_{i=\delta+1,k=1,\ldots,\delta -1}^n$, where
\begin{equation*}
\begin{array}{c}
\Theta_i:=  b_iM^* - \displaystyle\sum_{j=1}^i\text{\emph{mult}}_{p_j}(\varphi_i)E_j^*,\\
\Delta_i:=(-\delta b_i+c_i)F^* +b_iM^* - \displaystyle\sum_{j=1}^i \text{\emph{mult}}_{p_j}(\varphi_i)E_j^*,\\
\Gamma_i:= c_i M^* - \displaystyle\sum_{j=1}^i\left(\delta \, \text{\emph{mult}}_{p_j}(\varphi_i)\right)E_j^*,\text{ and }\\
\end{array}
\end{equation*}
\begin{equation*}
\begin{array}{c}
\Upsilon_{ik}:=(c_i-kb_i)M^* - \displaystyle\sum_{j=1}^{k}(c_i-kb_i)E_j^*-\displaystyle\sum_{j=k+1}^i\left((\delta-k) \text{\emph{mult}}_{p_j}(\varphi_i)\right)E_j^*,
\end{array}
\end{equation*}
and where
\[
b_i :=(\varphi_{F_1},\varphi_i)_p, 1 \leq i \leq n,\; \mbox{ and } \; c_i :=(\varphi_{M_1},\varphi_i)_p, \delta+1 \leq i \leq n.
\]

%\[
%b_i :=(\varphi_{F_1},\varphi_i)_p, 1 \leq i \leq n, \;\; {\mbox (respectively,} \; c_i :=(\varphi_{M_1},\varphi_i)_p, \delta+1 \leq i \leq n,{\mbox )} \;\;
%\]
%denotes the intersection multiplicity (at $p=p_1$) of the germ at $p$ given by $F_1$ (respectively, $M_1$) with a germ $\varphi_i$ as introduced before Proposition \ref{Prop_generatorsofdualcone_casespecial}.
\end{proposition}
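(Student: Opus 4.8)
The plan is to generalize the proof strategy of Proposition \ref{Prop_generatorsofdualcone_casespecial}: there we showed that $\{[F^*],[M^*]\}\cup\{[\Lambda_i]\}$ is the dual basis of the generating set of $S_1(Z)$. Here the cone $S_2(Z)$ has the extra generator $[\tilde{M}_1]$, so its dual cone is cut out by one additional linear inequality (namely $[D]\cdot[\tilde{M}_1]\geq 0$) on top of the facets defining $S_1^\vee(Z)$. The families $\Theta_i,\Delta_i,\Gamma_i,\Upsilon_{ik}$ are engineered to be the extreme rays of this smaller cone, so the proof amounts to verifying two things: first, that each listed class lies in $S_2^\vee(Z)$, i.e.\ pairs non-negatively with all of $[\tilde{F}_1],[\tilde{M}_0],[\tilde{M}_1]$ and every $[E_i]$; and second, that these classes together with $[F^*],[M^*]$ actually generate the whole dual cone and not merely a subcone.

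First I would compute the intersection numbers of each of the four families against the generators of $S_2(Z)$. For the pairings with $[\tilde F_1]$, $[\tilde M_0]$ and the $[E_j]$, I would reuse exactly the computations from Proposition \ref{Prop_generatorsofdualcone_casespecial}, invoking the proximity equalities \cite[Theorem 3.5.3]{Cas} and Noether's formula, together with $F^*\cdot\tilde F_1=0$, $F^*\cdot\tilde M_0=1$, $M^*\cdot\tilde F_1=1$, $M^*\cdot\tilde M_0=0$ and $F^*\cdot E_i=M^*\cdot E_i=0$. The genuinely new pairing is against $[\tilde M_1]$, and here I would substitute the explicit expression for $[\tilde M_1]$ from Lemma \ref{Lemma_expressionM1_basistildeFM0} into $\tilde M_1=\delta\tilde F_1+\tilde M_0+\sum_{i=1}^{\delta-1}(\delta-i)E_i+\sum_{i=\delta+1}^n d_i E_i$ (equivalently, using $[\tilde M_1]=[M^*]-\sum_{j=1}^{i_{M_1}}[E_j^*]$, which is cleaner for intersection computations since $E_i^*\cdot E_j^*=-\delta_{ij}$ and $M^*\cdot E_j^*=0$). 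The constants $b_i=(\varphi_{F_1},\varphi_i)_p$ and $c_i=(\varphi_{M_1},\varphi_i)_p$ are precisely $\nu_i(\varphi_{F_1})$ and $\nu_i(\varphi_{M_1})$, and the defining coefficients of $\Theta_i,\Delta_i,\Gamma_i,\Upsilon_{ik}$ are exactly the combinations that make the $\tilde M_1$-pairing vanish or stay non-negative; this is the calculation that motivates the otherwise mysterious coefficients $\delta b_i-c_i$, $(\delta-k)$, and $c_i-kb_i$.

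Second, for the generation claim, I would argue that a strongly convex full-dimensional rational cone with a known spanning set of facet-normals has its dual generated by the corresponding dual/extreme rays; concretely, I would show each facet of $S_2(Z)$ (obtained by dropping one generator) has its outward normal among the listed classes, so that every extreme ray of $S_2^\vee(Z)$ appears. The role of $\Upsilon_{ik}$, indexed by $k=1,\ldots,\delta-1$, is to supply the extra rays created because the single ray $[\tilde M_1]$ subdivides the region near the $E_1,\ldots,E_{\delta-1}$ facets; these indices match the coefficients $\delta-1,\delta-2,\ldots,1$ appearing in Lemma \ref{Lemma_expressionM1_basistildeFM0}, which is the structural reason the list takes this shape. \textbf{The main obstacle} will be precisely this generation/completeness step: unlike the special case, $S_2^\vee(Z)$ is no longer simplicial, so I cannot simply exhibit a dual basis. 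I would expect to handle it by a careful case analysis showing that an arbitrary class pairing non-negatively with all generators of $S_2(Z)$ can be written as a non-negative combination of the listed divisors, most plausibly by expressing it in the $\{F^*,M^*,E_j^*\}$ basis and solving the resulting system of inequalities, with the $\Upsilon_{ik}$ filling in the facets adjacent to the new $[\tilde M_1]$ ray. The bookkeeping here, tracking how the free/satellite structure of the points $p_{\delta+1},\ldots,p_n$ (and the bound $d_i\leq -1$ from the Lemma) interacts with the inequalities, is where the real work lies.
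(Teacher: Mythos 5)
Your outline does follow the same route as the paper's proof: identify the extreme rays of $S_2^\vee(Z)$ with normals of hyperplanes spanned by generators of $S_2(Z)$, work in the basis $\{[F^*],[M^*],[E_j^*]\}$, use proximity/Noether for the pairings against the $[E_j]$, and use Lemma \ref{Lemma_expressionM1_basistildeFM0} (equivalently $[\tilde{M}_1]=[M^*]-\sum_{j=1}^{i_{M_1}}[E_j^*]$) for the pairing against $[\tilde{M}_1]$. But there is a genuine gap, in two respects. First, your structural claim that the facets of $S_2(Z)$ are ``obtained by dropping one generator'' is false. The $n+3$ generators satisfy exactly one linear relation, namely the one in Lemma \ref{Lemma_expressionM1_basistildeFM0}, and every generator except $E_\delta$ occurs in it with nonzero coefficient (here $\delta>0$ because the valuation is non-special). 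Hence removing a single generator other than $E_\delta$ leaves a set that still spans all of $\text{Pic}_\mathbb{Q}(Z)$ and so spans no hyperplane; only the drop of $E_\delta$ gives a facet this way, and it yields just $[\Theta_\delta]$. All the other facets come from dropping \emph{two} generators: an $E_i$ together with one of $\tilde{M}_1$, $\tilde{M}_0$, $\tilde{F}_1$ (producing $\Theta_i$, $\Delta_i$, $\Gamma_i$, respectively), two of the three sections (producing $[F^*]$ and $[M^*]$), or two exceptional divisors $E_k,E_i$ with all three sections kept (producing $\Upsilon_{ik}$). Followed literally, your enumeration would miss nearly every class in the statement, in particular all of the $\Upsilon_{ik}$.

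Second, the completeness step you defer as ``careful case analysis'' is not bookkeeping around a known answer: it is the entire content of the paper's proof. The paper goes through each $(n+1)$-dimensional subspace spanned by generators, solves the corresponding system of equalities and sign inequalities explicitly, and it is precisely there that the index ranges are forced. The decisive input is that non-specialness makes the first $\delta+1$ points of $\mathcal{C}_\nu$ free and gives $i_{M_1}\geq\delta+1$ (not, as you suggest, the free/satellite structure of $p_{\delta+1},\ldots,p_n$); this is what confines $\Theta_i$ to $1\leq i\leq\delta$, confines $\Delta_i,\Gamma_i,\Upsilon_{ik}$ to $i\geq\delta+1$ and $k\leq\delta-1$, and rules out invalid candidates such as the normal to $\langle[\tilde{M}_1],[E_1],\ldots,[E_n]\rangle$, which does not lie in $S_2^\vee(Z)$. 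Your proposal supplies no mechanism for deriving these bounds or for excluding the failing normals; with only your first step (membership of the listed classes in $S_2^\vee(Z)$) carried out, one obtains a subcone of $S_2^\vee(Z)$, not the claimed equality.
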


\begin{proof}
It suffices to consider every $(n+1)$-dimensional linear subspace $H$ generated by elements of $S_2(Z)$ and check whether $H^\perp$ is generated by an element of $S_2^{\vee}(Z)$. We will see that these generators will be those in the statement.

Denote by $\langle S\rangle$ the linear subspace generated by a set $S\subseteq \text{Pic}_\mathbb{Q}(Z)$. Then,
$$\langle \{[\tilde{F}_1]\}\cup \{[E_i]\}_{i=1}^n \rangle^\perp=\langle [F^*]\rangle,\;\; \langle \{[\tilde{M}_0]\}\cup \{[E_i]\}_{i=1}^n \rangle^\perp=\langle [M^*] \rangle,$$
and $[F^*],  [M^*]\in S_2^{\vee}(Z)$. Moreover $\langle \{[\tilde{M}_1]\}\cup \{[E_i]\}_{i=1}^n \rangle^\perp$ is not generated by an element in $S_2^{\vee}(Z)$.

% and impose conditions for obtaining the extremal rays of $S_2^\vee(Z)$. We start with the three faces generated by $[\tilde{F}_1]$ (respectively, $[\tilde{M}_0]$) (respectively,  $[\tilde{M}_1]$) and the classes of the exceptional divisors $\{[E_i]\}_{i=1}^n$. They are clearly of dimension $(n+1)$ and, when we look for a class $[D]\in \text{Pic}_\mathbb{Q}(Z)$ that is orthogonal to the generators of the face and has non-negative intersection product with the remaining generators of $S_2(Z),$ we obtain $[F^*]$, (respectively, $[M^*]$) (respectively, $0$) as a solution. This gives the first two generators in our statement.

We have studied spaces $H$ whose generators contain all the classes $[E_i]$. Now we will treat the cases where a class $[E_i]$, $1\leq i\leq n,$ is not considered. Let us start with the linear space $\langle \{[\tilde{F}_1],[\tilde{M}_0] \} \cup \{[E_j]\}_{1\leq j\leq n, j\neq i} \rangle$, set
$$[D_i]=d_{i01}[F^*]+d_{i02}[M^*]+d_{i1}[E_1^*]+\cdots+d_{in}[E_n^*]\in \text{Pic}_\mathbb{Q}(Z)$$
with arbitrary coefficients and impose the conditions:
%Consider $[D_i]\in\text{Pic}_\mathbb{Q}(Z)$ whose coordinates in the basis $\{[F^*],[M^*]\} \cup \{E_i^*\}_{i=1}^n$ are $(d_{i01},d_{i02},d_{i1},\ldots,d_{in})$ and impose the conditions:
$$
[D_i]\cdot [\tilde{F}_1]=0,[ D_i ]\cdot [ \tilde{M}_0 ]= 0,[D_i]\cdot [E_j]=0,[ D_i]\cdot [ \tilde{M}_1 ]\geq 0\text{ and }[D_i]\cdot [E_i]\geq 0.
$$
Then we obtain the system of equalities and inequalities:
\begin{equation*}
\begin{array}{c}
d_{i02} + d_{i1}=0, \, d_{i01}+\delta d_{i02} -\delta d_{i02}= 0, \, -d_{ij}+\displaystyle\sum_{p_s\rightarrow p_j}d_{is}=0,\\
d_{i01}+\delta d_{i02} + \displaystyle\sum_{j=1}^{\min\{i,i_{M_1}\}}d_{ij}\geq 0 \text{ and } -d_{ii}+\displaystyle\sum_{p_s\rightarrow p_i}d_{is}\geq 0,
\end{array}
\end{equation*}
where $i_{M_1}$ is the index defined in the proof of Lemma \ref{Lemma_expressionM1_basistildeFM0}. Solving it, we obtain $d_{ii}=-1;d_{ij}=\sum_{p_s\rightarrow p_j}d_{is},$ $1\leq j\leq i-1;$ $d_{ij}=0$, $i+1\leq j\leq n$; $d_{i01}=0;$ and $d_{i02}=-d_{i1}$. This proves that $d_{ij}=-\text{mult}_{p_j}(\varphi_i)$ holds and also
$$
\delta \ \mbox{mult}_{p_1}(\varphi_i) - \sum_{j=1}^{\min\{i,i_{M_1}\}} \mbox{mult}_{p_j}(\varphi_i) \geq 0
$$
by our first inequality, which shows that the classes $\{[\Theta_i]\}_{1\leq i\leq \delta}$ in the statement give generators of the dual cone $S_2^\vee(Z).$

Reasoning as above for the subspace $\langle \{ [\tilde{F}_1],[\tilde{M}_1]\}\cup \{ [E_j]\}_{1\leq j\leq n, j\neq i} \rangle$ and with the same notation, we get the system of equalities and inequalities:
%We must also consider, for each index $1\leq i \leq n,$ the $(n+1)$-dimensional faces of $S_2(Z)$ generated by $[\tilde{F}_1],[\tilde{M}_1]$ and $[E_j]_{1\leq j\neq i\leq n}$. With the same notation of the previous paragraph, the system of equalities and inequalities we get is
\begin{equation*}
\begin{array}{c}
d_{i02}+d_{i1}=0, \, d_{i01} + \delta d_{i02} + \displaystyle\sum_{j=1}^{\min\{i,i_{M_1}\}} d_{ij}=0, \, -d_{ij}+\displaystyle\sum_{p_s\rightarrow p_j}d_{is}=0,\\
d_{i01}+\delta d_{i02} -\delta d_{i02}\geq 0 \text{ and } -d_{ii}+\displaystyle\sum_{p_s\rightarrow p_i}d_{is}\geq 0.
\end{array}
\end{equation*}
Here, the equality $d_{ij}=-\text{mult}_{p_j}(\varphi_i)$ is again true and the first inequality means that
$$
\sum_{j=1}^{\min\{i,i_{M_1}\}} \mbox{mult}_{p_j}(\varphi_i) - \delta \ \mbox{mult}_{p_1}(\varphi_i) \geq 0
$$
must hold. As a consequence, we have proved that the classes $\{[\Delta_i]\}_{\delta+1\leq i\leq n}$ in the statement give extremal rays of $S_2^\vee(Z).$

Repeating the procedure with $\langle \{ [\tilde{M}_0],[\tilde{M}_1]\}\cup \{ [E_j]\}_{1\leq j\leq n, j\neq i} \rangle$, the obtained system is
%We also note that, for $1\leq i \leq n$, by Lemma \ref{Lemma_expressionM1_basistildeFM0} the classes $[\tilde{M}_0],[\tilde{M}_1]$ and $[E_j]_{1\leq j\neq i\leq n}$ generate new $(n+1)$-dimensional faces of $S_2(Z)$. Repeating the above procedure, the obtained system is
\begin{equation*}
\begin{array}{c}
d_{i01}+\delta d_{i02} -\delta d_{i02}= 0, \, d_{i01} + \delta d_{i02} + \displaystyle\sum_{j=1}^{\min\{i,i_{M_1}\}} d_{ij}=0, \, -d_{ij}+\displaystyle\sum_{p_s\rightarrow p_j}d_{is}=0,\\
d_{i02} + d_{i1}\geq 0 \text{ and } -d_{ii}+\displaystyle\sum_{p_s\rightarrow p_i}d_{is}\geq 0.
\end{array}
\end{equation*}
This proves, on the one hand, that $d_{ii}=-1;d_{ij}=\sum_{p_s\rightarrow p_j}d_{is},$  $1\leq j\leq i-1;$ $d_{ij}=0$, $i+1\leq j \leq n$; $d_{i01}=0;$ and $d_{i02}=(1/\delta)\sum_{j=1}^{\min\{i,i_{M_1}\}} -d_{ij} $. On the other hand, reasoning as above, $d_{ij}=\delta\;\text{mult}_{p_j}(\varphi_i)$ and then
$$
\sum_{j=1}^{\min\{i,i_{M_1}\}} \mbox{mult}_{p_j}(\varphi_i) - \delta \ \mbox{mult}_{p_1}(\varphi_i) \geq 0,
$$
which shows that the set of classes $\{[\Gamma_i]\}_{\delta+1\leq i\leq n}$ gives  generators of $S_2^\vee(Z)$.

It only remains to consider those subspaces $\langle \{[\tilde{F}_1], [\tilde{M}_0],[\tilde{M}_1]\}\cup \{ [E_j]\}_{j\in\{1,2,\ldots,n\}\setminus\{k,i\}} \rangle$ attached to pairs of indices $k, i$, $1\leq k<i\leq n$.
%$(n+1)$-dimensional faces of $S_2(Z)$ attached to pairs of indices $k, i$, $1\leq k<i\leq n$, and generated by the classes $[\tilde{F}_1],[\tilde{M}_0],[\tilde{M}_1]$ and $\{[E_j]\}_{j\in\{1,2,\ldots,n\}\setminus\{k,i\}}.$
Lemma \ref{Lemma_expressionM1_basistildeFM0} proves the $(n+1)$-dimensionality of these subspaces. Our computations depend on two indices $i$ and $k$. So, we will write
$$[D_{ik}]=d_{ik01}[F^*]+ d_{ik02}[M^*]+d_{ik1}[E_1^*]+ d_{ik2}[E_2^*]+\cdots+ d_{ikn}[E_n^*].$$
%$$(d_{ik01},d_{ik02},d_{ik1}, d_{ik2}, \ldots, d_{ikn})$$
%the coordinates of a divisor $[D_{ik}]$ in the basis $\{[F^*],[M^*]\} \cup \{E_j^*\}_{j=1}^n$.
We must impose the following conditions:
\begin{align*}
[D_{ik}]\cdot [\tilde{F}_1]=0, [D_{ik}]\cdot [\tilde{M}_0]=0,[D_{ik}]\cdot [\tilde{M}_1]=0,[D_{ik}]\cdot [E_j]=0,[D_{ik}]\cdot [E_k]\geq 0 \\
\text{ and }[D_{ik}]\cdot [E_i]\geq 0,
\end{align*}
which give the equivalent system
\begin{equation*}
\begin{array}{c}
d_{ik02}+d_{ik1}=0, \, d_{ik01}=0, \, d_{ik01} + \delta d_{ik02} + \displaystyle\sum_{j=1}^{\min\{i,i_{M_1}\}} d_{ikj}=0,\\
-d_{ikj}+\displaystyle\sum_{p_s\rightarrow p_j}d_{iks}=0,-d_{ikk}+\displaystyle\sum_{p_s\rightarrow p_k}d_{iks}\geq 0 \text{ and } -d_{iki}+\displaystyle\sum_{p_s\rightarrow p_i}d_{iks}\geq 0.
\end{array}
\end{equation*}
To solve it we can assume that the inequalities are strict because, otherwise, we would obtain that $[D_{ik}]$ either vanishes or it gives the class $[\Theta_\delta]$. Indeed, if both inequalities are equalities, then $[D_{ik}]=0.$ Otherwise, taking into account that the first $\delta +1$ points in $\mathcal{C}_\nu$ are free, by considering the third equality and $\delta + 1\leq i_{M_1}$, it holds that one of the indices $i$ or $k$ equals $\delta$. This shows that we obtain $[\Theta_\delta]$ as a solution.

The solutions of the system satisfy  that $d_{iki}=-1;-d_{ikk}> -\sum_{p_s\rightarrow p_k}d_{iks}$; $ d_{ikj}=\sum_{p_s\rightarrow p_j}d_{iks},$  $1\leq j\neq k\leq i-1;$ $d_{ikj}=0$,  $i+1\leq j \leq n$; $d_{ik01}=0;d_{ik02}=-d_{ik1};$ and it must hold that
\begin{equation}\label{Cond_prop_gene_casogeneralM1negativo_upsilon}
-\delta d_{ik1}=-\sum_{j=1}^{\min\{i,i_{M_1}\}} d_{ikj},
\end{equation}
by the third equation. Note that, for $k+1\leq j\leq i$, $d_{ikj}=-\text{mult}_{p_j}(\varphi_i)$ up to a positive factor, and also that $-d_{ikk}> -\sum_{p_s\rightarrow p_k}d_{iks}\geq 0$.

The indices $i$ and $k$ must satisfy that $1\leq k\leq \delta -1$ and $\delta+1\leq i\leq n$. Indeed, with respect to $k$ and  reasoning by contradiction, suppose that $k\geq \delta.$ By hypothesis, $k<i,$ $\delta +1\leq i_{M_1}$, and $d_{ikj}=d_{ik\delta}$ for $1 \leq j \leq \delta -1$, because the first $\delta + 1$ points in $\mathcal{C}_\nu$ are free, then
$$
-\sum_{j=1}^{\min\{i,i_{M_1}\}} d_{ikj}= -\delta d_{ik1} -\sum_{j=\delta +1}^{\min\{i,i_{M_1}\}} d_{ikj} ,
$$
where $-\sum_{j=\delta +1}^{\min\{i,i_{M_1}\}} d_{ikj}> 0$, which does not hold by Equality  \eqref{Cond_prop_gene_casogeneralM1negativo_upsilon}. Notice that this equality is true by our imposed equalities. With respect to the index $i$, again reasoning by contradiction, suppose that $i\leq \delta.$ As $1\leq k\leq \delta -1$, Equation \eqref{Cond_prop_gene_casogeneralM1negativo_upsilon} is equivalent to
\begin{equation}\label{Cond_prop_gene_casogeneralM1negativo_upsilon_consecuencia}
-(\delta -k) d_{ikk}=-\sum_{j=k+1}^{\min\{i,i_{M_1}\}} d_{ikj},
\end{equation}
because $d_{ikj}=d_{ikk}$ for $1\leq j \leq k$. This implies that $-(\delta -k) d_{ikk}=-(i-k)d_{ikk+1}$,  which is a contradiction since $-d_{ikk} > -d_{ikk+1}$.

Equality \eqref{Cond_prop_gene_casogeneralM1negativo_upsilon} also gives us the value of $d_{ikk}$, which can be obtained from the following chain of equalities:
$$
d_{ikk}=\delta d_{ik1}-\sum_{j=1,\; j\neq k}^{\min\{i,i_{M_1}\}}d_{ikj}= \delta d_{ikk} - (k-1)d_{ikk}-\sum_{j=k+1}^{\min\{i,i_{M_1}\}}d_{ikj}.
$$

Thus, if we take $d_{ikj}=-(\delta -k)\mbox{mult}_{p_j}(\varphi_i),$ $k+1\leq j \leq i,$ one gets that
$$
d_{ik1}=\ldots =d_{ikk}=\dfrac{-(\delta -k) \sum_{j=k+1}^{\min\{i,i_{M_1}\}} \mbox{mult}_{p_j}(\varphi_i)}{(\delta-k)}=-\sum_{j=k+1}^{\min\{i,i_{M_1}\}} \mbox{mult}_{p_j}(\varphi_i),
$$
and the coefficient of $[M^*]$ is $d_{ik02}=-d_{ik1}$.
%\begin{align*}
%-\dfrac{1}{\delta}\sum_{j=1}^{\min\{i,i_{M_1}\}}d_{ikj}&=-\dfrac{1}{\delta}\left(kd_{ikk}+\sum_{j=k+1}^{\min\{i,i_{M_1}\}}d_{ikj}\right)\\
%&=-\dfrac{1}{\delta}\left(k\sum_{j=k+1}^{\min\{i,i_{M_1}\}} -\mbox{mult}_{p_j}(\varphi_i)+(\delta-k)\sum_{j=k+1}^{\min\{i,i_{M_1}\}} -\mbox{mult}_{p_j}(\varphi_i)\right)\\
%&=\sum_{j=k+1}^{\min\{i,i_{M_1}\}} \mbox{mult}_{p_j}(\varphi_i).
%\end{align*}

As a result, we have that $[D_{ik}]=[\Upsilon_{ik}],$ where
\begin{align*}
\Upsilon_{ik}:=\left(\sum_{j=k+1}^{\min\{i,i_{M_1}\}} \mbox{mult}_{p_j}(\varphi_i)\right)M^* - \sum_{j=1}^{k}\left(\sum_{s=k+1}^{\min\{i,i_{M_1}\}} \mbox{mult}_{p_s}(\varphi_{i})\right)E_j^*\\
-\sum_{j=k+1}^i\left((\delta-k) \mbox{mult}_{p_j}(\varphi_i)\right)E_j^*,
\end{align*}
where $\delta + 1\leq i \leq n $ and $1\leq k\leq \delta -1$. This finishes the proof.
\end{proof}

\begin{remark}
From the above proof, it can be deduced that, when considering the surface $\mathbb{F}_1$ and a non-special valuation $\nu$,  no class $[\Upsilon_{ik}]$ appears as a generator of $S_2^\vee(Z)$.
\end{remark}

We are interested in determining conditions under which the generators of the cone $NE(Z)$ of the surfaces $Z$ given by non-special valuations are known. The divisors introduced in Proposition \ref{Prop_generatorsofdualcone_casegeneral} will be important for this purpose. Next lemma states some of their properties.

\begin{lemma}\label{Lemma_2_casogeneral}
Let $Z$ (respectively, $\nu$) be a rational surface (respectively, valuation) as in Proposition \ref{Prop_generatorsofdualcone_casegeneral}. Consider the set of divisors there defined. Then $\Delta_{\delta +1}^2 > 0$, $\Gamma_{\delta +1}^2>0$ and $\Upsilon_{\delta+1 k}^2>0$ for all $ k\in\{1,2,\ldots,\delta -1\}$. In addition, for any index $i\in\{\delta+2,\delta+3,\ldots,n\}$ such that $\Delta_i^2\geq 0$ (respectively, $\Gamma_i^2\geq 0$, $\Upsilon_{ik}^2\geq 0$), the following properties are satisfied:
\begin{itemize}
\item[(a)]If $p_i$ is a satellite point of the configuration $\mathcal{C}_\nu$ that $\nu$ defines, it holds $\Delta_i^2>0$ (respectively, $\Gamma_i^2>0$, $\Upsilon_{ik}^2>0$).
\item[(b)] $\Delta_{i-1}^2\geq 0$ (respectively, $\Gamma_{i-1}^2\geq 0$,$\Upsilon_{i-1 k}^2\geq 0$) and, moreover, if $\Delta_{i-1}^2=0$ (respectively, $\Gamma_{i-1}^2=0$,$\Upsilon_{i-1 k}^2=0$) then $p_i$ is a satellite point and $p_{i-1}$ is free.
\end{itemize}
\end{lemma}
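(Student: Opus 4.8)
The strategy is to follow the template of the proof of Lemma \ref{Lemma_2_casoespecial}, but I would first record an algebraic reduction that turns the $\Gamma$- and $\Upsilon$-families into near-consequences of the $\Delta$-family. Writing $m_j:=\text{mult}_{p_j}(\varphi_i)$ and using the orthogonality relations $(F^*)^2=0$, $(M^*)^2=\delta$, $F^*\cdot M^*=1$, $F^*\cdot E_j^*=M^*\cdot E_j^*=0$, $E_i^*\cdot E_j^*=-\delta_{ij}$, together with Noether's formula in the form $\sum_{j=1}^i m_j^2=\overline{\beta}_{g+1}$ (the inverse volume of the valuation $\nu_i$ defined by $E_i$), I obtain
\[
\Delta_i^2 = 2b_ic_i-\delta b_i^2-\overline{\beta}_{g+1},\qquad
\Gamma_i^2=\delta\bigl(c_i^2-\delta\overline{\beta}_{g+1}\bigr),
\]
\[
\Upsilon_{ik}^2=(\delta-k)\Bigl[(c_i-kb_i)^2-(\delta-k)\sum_{j=k+1}^{i} m_j^2\Bigr].
\]
Because the first $\delta+1$ points of $\mathcal{C}_\nu$ are free (exactly the non-special hypothesis of Definition \ref{SG}), the proximity equalities force $m_1=\cdots=m_{\delta}=b_i$, whence $\sum_{j=1}^{k}m_j^2=kb_i^2$ for $k\leq\delta-1$ and $\sum_{j=k+1}^{i}m_j^2=\overline{\beta}_{g+1}-kb_i^2$. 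Substituting, I get the clean identities
\[
\Gamma_i^2=\delta^2\Delta_i^2+\delta(c_i-\delta b_i)^2,\qquad
\Upsilon_{ik}^2=(\delta-k)^2\Delta_i^2+(\delta-k)(c_i-\delta b_i)^2,
\]
valid for all $i\geq\delta+1$. Since $\delta\geq1$ and $1\leq k\leq\delta-1$, both leading coefficients are positive; in particular $\Gamma_i^2=0$ (resp.\ $\Upsilon_{ik}^2=0$) forces $\Delta_i^2=0$ and $c_i=\delta b_i$. This is the device I will use to transfer statement (a) of the auxiliary families to the $\Delta$-family.

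For the base case $i=\delta+1$ every $m_j$ equals $1$; since $M_1$ passes smoothly through $p_1,\dots,p_{\delta+1}$ while $F_1$ leaves after $p_1$, one has $b_{\delta+1}=1$, $c_{\delta+1}=\delta+1$ and $\overline{\beta}_{g+1}=\delta+1$. Direct substitution then gives $\Delta_{\delta+1}^2=1$, $\Gamma_{\delta+1}^2=\delta(\delta+1)$ and $\Upsilon_{\delta+1\,k}^2=(\delta-k)(\delta-k+1)$, all strictly positive. For the inductive statements I would reduce, as in Lemma \ref{Lemma_2_casoespecial}, to the case $i=n$. I treat statement (a) with $p_n$ satellite first for $\Delta$: assuming $\Delta_n^2=0$ and writing $\overline{\beta}_{g+1}=e_{g-1}\overline{\beta}_g$, I use the identification (analogous to the special case) of $b_n$ and $c_n$ with multiples of $\overline{\beta}_0$ and $\overline{\beta}_1$, so that $e_{g-1}^2$ divides $2b_nc_n-\delta b_n^2$. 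Then $\Delta_n^2=e_{g-1}\bigl[(2b_nc_n-\delta b_n^2)/e_{g-1}-\overline{\beta}_g\bigr]$ cannot vanish because $\gcd(e_{g-1},\overline{\beta}_g)=1$, exactly as in Case~1(a)/2(a) there (the sign change of the $\delta b_n^2$ term is harmless, since $e_{g-1}^2\mid\delta b_n^2$). For $\Gamma$ and $\Upsilon$, statement (a) is then immediate: a vanishing self-intersection at a satellite point would force $\Delta_n^2=0$ through the two identities, contradicting the $\Delta$-case.

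Statement (b) is where the real work lies and is the main obstacle. When $p_n$ is free, $\Delta_{n-1}^2>0$ follows directly from Noether's formula, and hence $\Gamma_{n-1}^2,\Upsilon_{n-1\,k}^2>0$ by the identities. When $p_n$ is satellite I would introduce the valuation $\widehat{\nu}$ defined by $E_{n-1}$, with maximal contact values $\{\widehat{\overline{\beta}}_j\}$, and exploit $b_{n-1}=eb_n$, $c_{n-1}=ec_n$ together with the estimate $|\widehat{\overline{\beta}}_g-e\overline{\beta}_g|=1/e_{g-1}$ from \eqref{eq_lemma2}, splitting into the cases $g=\widehat{g}$ (subcases $g>1$ and $g=1$) and $\widehat{g}=g-1$. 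Running this on $\Delta_{n-1}^2=2c_{n-1}b_{n-1}-\delta b_{n-1}^2-\widehat{\overline{\beta}}_{g+1}$ reproduces the inequalities of Lemma \ref{Lemma_2_casoespecial} up to the sign of the $\delta b^2$ term, giving $\Delta_{n-1}^2\geq0$ and the asserted zero-case. For $\Gamma$ and $\Upsilon$ I would then repeat the same descent directly on $\Gamma_{n-1}^2=\delta(c_{n-1}^2-\delta\widehat{\overline{\beta}}_{g+1})$ and on the $\Upsilon_{n-1\,k}^2$ formula, because the hypotheses $\Gamma_n^2\geq0$ and $\Upsilon_{nk}^2\geq0$ do not, a priori, grant $\Delta_n^2\geq0$. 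The two delicate points I expect to handle with care are the justification that $c_i$ (the partial sum $\sum_{j\le\min\{i,i_{M_1}\}}m_j$, rather than a single maximal contact value) carries the required divisibility by $e_{g-1}$, and, in the $\Gamma$ analysis, that the extra factor $\delta$ multiplying $\overline{\beta}_{g+1}$ does not spoil the coprimality argument.
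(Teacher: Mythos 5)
Your reduction formulas and the base case $i=\delta+1$ are correct and agree with the paper's, and the two identities
\[
\Gamma_i^2=\delta^2\Delta_i^2+\delta(c_i-\delta b_i)^2,\qquad
\Upsilon_{ik}^2=(\delta-k)^2\Delta_i^2+(\delta-k)(c_i-\delta b_i)^2
\]
are genuinely true (they even give Lemma \ref{Lemma_onlyDelta2nonnegative} in one line, more cleanly than the paper's own proof of that lemma). The gap is in how you use them. You claim that $\Gamma_i^2=0$ (resp.\ $\Upsilon_{ik}^2=0$) forces $\Delta_i^2=0$ and $c_i=\delta b_i$, and you make this the entire proof of statement (a) for the $\Gamma$- and $\Upsilon$-families. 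That inference is invalid: it would require both summands on the right-hand side to be non-negative, but under the hypothesis of the $\Gamma$-case you only know $\Gamma_i^2\geq 0$, not $\Delta_i^2\geq 0$ --- a point you yourself concede when discussing statement (b). All that $\Gamma_i^2=0$ yields is $\Delta_i^2=-(c_i-\delta b_i)^2/\delta\leq 0$. In fact the situation is worse: for a non-special valuation and $i\geq\delta+1$ one always has $c_i>\delta b_i$ (since $i_{M_1}\geq\delta+1$ and $m_1=\cdots=m_\delta=b_i$ give $c_i\geq \delta b_i+m_{\delta+1}$; this strict inequality is exactly what the paper invokes in the proof of Lemma \ref{Lemma_onlyDelta2nonnegative}). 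Hence $c_i=\delta b_i$ can never occur, and $\Gamma_i^2=0$ would force $\Delta_i^2<0$ strictly, which is perfectly compatible with your $\Delta$-case conclusion that a satellite $p_n$ has $\Delta_n^2\neq 0$. No contradiction arises, so statement (a) for $\Gamma_i$ and $\Upsilon_{ik}$ is left unproved.

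The identities transfer information in one direction only (from $\Delta$ up to $\Gamma$ and $\Upsilon$), so this cannot be repaired by algebra alone; one must run the satellite/coprimality argument and the descent to the valuation defined by $E_{n-1}$ directly on $\Gamma_n^2=\delta\bigl(c_n^2-\delta\overline{\beta}_{g+1}\bigr)$ and on $\Upsilon_{nk}^2=(\delta-k)\bigl[c_n^2-k(2c_nb_n-\delta b_n^2)-(\delta-k)\overline{\beta}_{g+1}\bigr]$, which is what the paper does: its proof repeats the reasoning of Lemma \ref{Lemma_2_casoespecial} separately on each of the three displayed self-intersections. You do propose that route for statement (b), but you explicitly defer the two points on which it hinges --- that $c_i$, being the value of $\nu_i$ at a smooth germ, is a multiple of $\overline{\beta}_0$ or equal to $\overline{\beta}_1$ and hence divisible by $e_{g-1}$ when $g>1$, and that the factors $\delta$ and $\delta-k$ multiplying $\overline{\beta}_{g+1}$ do not destroy the coprimality argument. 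Since these are precisely the delicate steps, the $\Gamma$- and $\Upsilon$-halves of both (a) and (b) remain open in your write-up.
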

%\edz{quizás, hacer énfasis que sólo aparece $\Upsilon_{ik}$ cuando $\delta\geq 2$.}
\begin{proof}
To prove our first assertion, it suffices to notice that the following three equalities hold:
\begin{equation*}
\begin{array}{rl}
\Delta_{\delta+1}^2 \!\!\!\!&= 2 + \delta - (\delta + 1)=1>0,\\[2mm]
\Gamma_{\delta + 1}^2 \!\!\!&= \delta(\delta  + 1)^2 - \delta^2(\delta + 1)= \delta(\delta + 1)(\delta +1 - \delta)= \delta(\delta + 1)>0,\\[2mm]
\Upsilon_{\delta + 1 k}^2 \!\!\!\!&= \delta(\delta + 1-k)^2 - k(\delta +1 - k)^2 - (\delta + 1 -k)(\delta -k)^2\\[2mm]
 \!\!\!\!&= (\delta -k)(\delta +1 - k)^2 - (\delta -k)^2(\delta + 1-k) \\[2mm]
 \!\!\!\!&= (\delta-k)(\delta + 1 - k )[\delta-k+1-(\delta-k)]\\[2mm]
 \!\!\!\!&=(\delta - k)(\delta +1 - k)>0.
\end{array}
\end{equation*}

Items (a) and (b) can be proved reasoning as in the proof of Lemma \ref{Lemma_2_casoespecial}. Indeed, recalling that $g+2$ is the cardinality of the set of maximal contact values of $\nu$, the case $g=1$ follows as in that proof, and, when $g>1$, with notations as in that lemma and  in Proposition \ref{Prop_generatorsofdualcone_casegeneral}, it suffices to consider the following equalities and to reason again as we did in the mentioned Lemma \ref{Lemma_2_casoespecial}.
\begin{equation*}
\begin{array}{rl}
\Delta_n^2\!\!\!\!&= 2(-\delta b_n+c_n)b_n + b_n^2\delta - \overline{\beta}_{g+1}= 2b_nc_n - \delta b_n^2 - \overline{\beta}_{g+1},\\[2mm]
\!\!\!\!&=  e_{g-1}\left[\dfrac{2b_nc_n-\delta b_n^2}{e_{g-1}} -\overline{\beta}_g\right],\\[5mm]
%\Delta_{n-1}^2\!\!\!\!&\geq e^2\left[\Delta_n^2-\dfrac{1}{e} \right] >0,\\[5mm]
\Gamma_n^2\!\!\!\!&=c_n^2\delta-\delta^2\overline{\beta}_{g+1},\;\;\; \mathrm{and}\\
\Upsilon_{nk}^2  \!\!\!\!& =(c_n-kb_n)^2\delta - k(c_n-kb_n)^2-(\delta-k)^2\displaystyle\sum_{j=k+1}^n\text{mult}_{p_j}^2(\varphi_n)) \\[5mm]
\!\!\!\!& = (\delta-k)[(c_n-kb_n)^2 -(\delta-k) (\overline{\beta}_{g+1} - kb_n^2)]\\[2mm]
\!\!\!\!& = (\delta-k)[c_n^2 - 2kc_nb_n + \delta kb_n^2 - (\delta -k )\overline{\beta}_{g+1}]\\[2mm]
\!\!\!\!& = (\delta-k)[c_n^2 - k( 2c_nb_n - \delta b_n^2) - (\delta - k)\overline{\beta}_{g+1}].
\end{array}
\end{equation*}
\end{proof}

\begin{remark}\label{Remark_inequalitiesofThetaDeltaGammaUpsilon}
Lemma \ref{Lemma_2_casogeneral} allows us to get numerical conditions which imply the non-nega\-ti\-vi\-ty of the self-intersection of the divisors, appearing in Proposition \ref{Prop_generatorsofdualcone_casegeneral}, whose classes generate the above defined dual cone $S_2^\vee(Z)$. Let us show which are those numerical conditions.

It is clear that the divisors $\Theta_i$, $1\leq i\leq \delta,$ satisfy $\Theta_i^2=\delta-i\geq 0$ because each $p_j,\ 1\leq j \leq i$, is a free point.

Now, $2b_nc_n-\delta b_n^2\geq [\text{vol}(\nu)]^{-1}$ implies that, for all $i\in\{1,2\ldots,n\}$, $\Delta_i^2\geq 0$, which is equivalent to the fact that $2b_ic_i-\delta b_i^2\geq [\text{vol}(\nu_i)]^{-1}$, where $\nu_i$ is the divisorial valuation defined in Remark \ref{34}. In a similar way, it holds that if $c_n^2\geq \delta [\text{vol}(\nu)]^{-1},$ then $\Gamma_i^2\geq 0$ or, equivalently, $c_i^2\geq \delta [\text{vol}(\nu_i)]^{-1}$ for all $i\in\{1,2,\ldots,n\}$.

Finally, for each integer $k$, $1\leq k\leq \delta -1,$ if one assumes $$c_n^2 - k(2c_nb_n-\delta b_n^2)\geq (\delta-k)[\text{vol}(\nu)]^{-1},$$ one can deduce that, for all $i\in\{1,2,\ldots, n\}$, $\Upsilon_{ik}^2\geq 0$ or, equivalently, $c_i^2-k(2c_ib_i-\delta b_i^2)\geq (\delta - k)[\text{vol}(\nu_i)]^{-1}$.
\end{remark}

Before giving our main result in this section, we need to state a last lemma.

\begin{lemma}\label{Lemma_onlyDelta2nonnegative}
Let $\nu$ be a non-special divisorial valuation of a Hirzebruch surface and $Z$ the surface that it defines. Consider the divisors $\Delta_i,\Gamma_i$ and $\Upsilon_{ik},$ $\delta+1\leq i\leq n;$ $1\leq k \leq \delta-1,$ given in Proposition \ref{Prop_generatorsofdualcone_casegeneral}. Then, for each index $i$, $\Delta_i^2\geq 0$ implies $\Gamma_i^2\geq 0$ and $\Upsilon_{ik}^2\geq 0$ for all $k\in\{1,2,\ldots,\delta-1\}$.
\end{lemma}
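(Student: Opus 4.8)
The plan is to translate each of the three self-intersection conditions into the purely numerical inequalities recorded in Remark \ref{Remark_inequalitiesofThetaDeltaGammaUpsilon}, and then to exhibit a single algebraic identity that ties them together. I write $V_i := [\mathrm{vol}(\nu_i)]^{-1}$ and $A_i := 2b_ic_i - \delta b_i^2$, where $\nu_i$ is the divisorial valuation defined by $E_i$ as in Remark \ref{34}. By Remark \ref{Remark_inequalitiesofThetaDeltaGammaUpsilon}, for each fixed index $i$ one has the equivalences $\Delta_i^2 \geq 0 \Leftrightarrow A_i \geq V_i$, $\;\Gamma_i^2 \geq 0 \Leftrightarrow c_i^2 \geq \delta V_i$, and $\Upsilon_{ik}^2 \geq 0 \Leftrightarrow c_i^2 - kA_i \geq (\delta-k)V_i$ for $1 \leq k \leq \delta - 1$. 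Thus the statement reduces to showing that the single inequality $A_i \geq V_i$ forces both $c_i^2 \geq \delta V_i$ and $c_i^2 - kA_i \geq (\delta-k)V_i$.

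The key observation I would use is the identity
\begin{equation*}
c_i^2 - \delta A_i = c_i^2 - 2\delta b_ic_i + \delta^2 b_i^2 = (c_i - \delta b_i)^2 \geq 0,
\end{equation*}
so that $c_i^2 \geq \delta A_i$ holds unconditionally. Granting the hypothesis $\Delta_i^2 \geq 0$, that is $A_i \geq V_i$, and using $\delta > 0$, I obtain $c_i^2 \geq \delta A_i \geq \delta V_i$, which is exactly $\Gamma_i^2 \geq 0$. For the family $\Upsilon_{ik}$ the same two facts give
\begin{equation*}
c_i^2 - kA_i \geq \delta A_i - kA_i = (\delta - k)A_i \geq (\delta - k)V_i,
\end{equation*}
where the first inequality is $c_i^2 \geq \delta A_i$ and the last one uses $A_i \geq V_i$ together with $\delta - k \geq 1 > 0$ for $1 \leq k \leq \delta -1$; this is precisely $\Upsilon_{ik}^2 \geq 0$ for every admissible $k$.

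There is essentially no deep obstacle here: once the three conditions are put in the normalized form above, everything follows from the elementary identity $(c_i - \delta b_i)^2 \geq 0$. The only points requiring care are bookkeeping ones, namely checking that the equivalences in Remark \ref{Remark_inequalitiesofThetaDeltaGammaUpsilon} are genuinely two-sided for each individual $i$ (so that I may read $\Delta_i^2 \geq 0$ as $A_i \geq V_i$ and conversely read off $\Gamma_i^2 \geq 0$ and $\Upsilon_{ik}^2 \geq 0$ from the corresponding inequalities), and tracking the sign of $\delta - k$, which stays strictly positive exactly on the stated range $1 \leq k \leq \delta - 1$. I would also note that $A_i \geq V_i$ with $V_i > 0$ forces $A_i > 0$, which is reassuring but not actually needed for the chains of inequalities above.
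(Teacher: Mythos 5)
Your proof is correct and rests on the same key fact as the paper's own argument: the perfect-square identity $c_i^2 - \delta(2b_ic_i - \delta b_i^2) = (c_i - \delta b_i)^2 \geq 0$, combined with the numerical reformulations of $\Delta_i^2, \Gamma_i^2, \Upsilon_{ik}^2$ from Remark \ref{Remark_inequalitiesofThetaDeltaGammaUpsilon}. The paper merely organizes the same inequalities differently — it first proves $\Delta_i^2 \geq 0 \Rightarrow \Upsilon_{i\,\delta-1}^2 \geq 0$ via this identity (in the strict form, using $c_i > \delta b_i$ for non-special valuations) and then deduces $\Gamma_i^2 \geq 0$ and the remaining $\Upsilon_{ik}^2 \geq 0$ from that intermediate case — so your direct chaining is a streamlined version of the paper's proof.
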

%\edz{En el caso $\delta=1$, solo haría falta comparar $\Delta_i$ y $\Gamma_i$. Aunque se procedería de igual manera que cuando comparamos $\Delta_i$ y $\Upsilon_{i\delta-1 i}$.}
\begin{proof}
Our proof follows from the following two properties:

{\it Property} $1$: If the self-intersections of the divisors $\Delta_i$ and $\Upsilon_{i \delta -1}$ are non-negative, then the same property holds for the divisors $\Gamma_i$ and $\Upsilon_{ik},$ $1\leq k \leq \delta-1$.

{\it Property} $2$: If the self-intersection of the divisor $\Delta_i$ is non-negative, so is the self-intersection of $\Upsilon_{i \delta-1}$.

For proving Property $1$, our hypothesis are, by Remark \ref{Remark_inequalitiesofThetaDeltaGammaUpsilon},
\begin{equation} \label{primeraine}
[\text{vol}(\nu_i)]^{-1} \leq 2c_ib_i - \delta b_i^2 \;\;\text{ \ and \ }
\end{equation}
\begin{equation}\label{Cond_1_Lemma_onlyDelta2nonnegative}
(\delta - 1)( 2c_ib_i - \delta b_i^2 - [\text{vol}(\nu_i)]^{-1} )\leq c_i^2 - \delta [\text{vol}(\nu_i)]^{-1}.
\end{equation}
The inequality in \eqref{Cond_1_Lemma_onlyDelta2nonnegative} and the following one
\begin{equation*}
[\text{vol}(\nu_i)]^{-1} \leq c_i^2 - (\delta - 1)( 2c_ib_i - \delta b_i^2)
\end{equation*}
are equivalent. From this last inequality and the  one in \eqref{primeraine}, we get that $c_i^2\geq \delta[\text{vol}(\nu_i)]^{-1}$ and then $\Gamma_i^2\geq 0$. Finally, $\Upsilon_{ik}^2\geq 0$, $1\leq k \leq \delta -1,$ if and only if the inequality
$$
 k( 2c_ib_i - \delta b_i^2 - \overline{\beta}_{g+1}^{\ i}) \leq c_i^2 - \delta \overline{\beta}_{g+1}^{\ i}
$$
holds, fact that follows straightforwardly from the inequalities \eqref{primeraine} and \eqref{Cond_1_Lemma_onlyDelta2nonnegative}.

To conclude we prove Property $2$. It suffices to check that the following inequalities
\begin{equation}\label{Cond_2_Lemma_onlyDelta2nonnegative}
[\text{vol}(\nu_i)]^{-1}\leq 2c_ib_i - \delta b_i^2 < c_i^2 - (\delta - 1)( 2c_ib_i - \delta b_i^2)
\end{equation}
are true. In fact, the first inequality comes from our hypothesis $\Delta_i^2\geq 0$ and the inequality given by the first and the last sides in \eqref{Cond_2_Lemma_onlyDelta2nonnegative} allows us to show $\Upsilon_{i \delta-1}^2\geq 0$. To prove the second inequality in \eqref{Cond_2_Lemma_onlyDelta2nonnegative}, set $c_i=x$ and $b_i=b$ for simplicity. We are considering non-special valuations, which means that $x>\delta b$. In our new notation we want to prove that
$$
2bx -\delta b^2 < x^2- (\delta -1)(2bx-\delta b^2).
$$
This inequality is equivalent to
$$
0 < x^2 - (2b\delta)x + \delta^2 b^2,
$$
and it holds for all $x\neq \delta b$ since the point $(\delta b,0)$ is the vertex of the parabola given by the right-hand side of the inequality.
\end{proof}

Theorem \ref{Th1_caso_especial} considered special valuations of Hirzebruch surfaces. There we gave equivalent conditions to the non-positivity at infinity of valuations of that type. Our next result gives the corresponding conditions for non-special valuations. In fact, it gives an easy to check numerical and local condition, and two global properties concerning the surfaces that these valuations define. Before stating our result, we introduce the concepts of non-positive, and negative, at infinity, non-special valuation.

\begin{definition}
\label{SG2}
Let $\nu$ be a non-special divisorial valuation of a Hirzebruch surface $\mathbb{F}_\delta$ and keep the above notation. The valuation $\nu$ is called to be {\it non-positive} (respectively, {\it negative}) at infinity if $\nu(h) \leq 0$ (respectively, $\nu(h) <0$) for all $h\in\mathcal{O}_{\mathbb{F}_\delta}(\mathbb{F}_\delta\setminus(F_1\cup M_1))$ (respectively, $h\in\mathcal{O}_{\mathbb{F}_\delta}(\mathbb{F}_\delta\setminus(F_1\cup M_1))$ such that $h \notin k$).
\end{definition}

\begin{theorem}\label{Th1_caso_general}
Let $\nu$ be a non-special divisorial valuation of the fraction field of $R=\mathcal{O}_{\mathbb{F}_\delta,p}$ centered at $R$ and $\mathcal{C}_\nu=\{p_i\}_{i=1}^n$ the configuration of infinitely near points defined by $\nu$. Let $Z$ be the surface that $\nu$ defines and consider the divisor $\Delta_n$ on $Z$  defined in Proposition \ref{Prop_generatorsofdualcone_casegeneral}. Then, the following conditions are equivalent:
\begin{itemize}
\item[(a)] The valuation $\nu$ is non-positive at infinity.
\item[(b)] The divisor $\Delta_n$ is nef.
\item[(c)] It holds the following inequality $2c_nb_n-\delta b_n^2\geq [\text{\emph{vol}}(\nu)]^{-1}$.
\item[(d)] The cone of curves of $Z$ is generated by $[\tilde{F}_1],[\tilde{M}_0],[\tilde{M}_1],[E_1],[E_2],\ldots,[E_n]$.
\end{itemize}
\end{theorem}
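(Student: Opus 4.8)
The plan is to reproduce the architecture of the proof of Theorem \ref{Th1_caso_especial}, establishing the implications $(b)\Rightarrow(a)\Rightarrow(b)$, $(b)\Rightarrow(c)$, $(c)\Rightarrow(d)$ and $(d)\Rightarrow(b)$, with the divisor $\Delta_n$ and the cone $S_2(Z)$ taking over the roles played there by $\Lambda_n$ and $S_1(Z)$, and with the chart $\mathbb{F}_\delta\setminus(F_1\cup M_1)$ replacing $\mathbb{F}_\delta\setminus(F_1\cup M_0)$. The relevant feature of $\Delta_n$, read off from the proof of Proposition \ref{Prop_generatorsofdualcone_casegeneral}, is that it is the dual-cone generator attached to $E_n$: it satisfies $\Delta_n\cdot\tilde{F}_1=\Delta_n\cdot\tilde{M}_1=0$, $\Delta_n\cdot E_j=0$ for $j\neq n$, and $\Delta_n\cdot E_n=1$, which is exactly what makes it the correct analogue of $\Lambda_n$ in the non-special situation.

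For $(b)\Leftrightarrow(a)$ I would first fix coordinates adapted to the two curves at infinity. Since $\nu$ is non-special, $p$ is a general point lying on both $F_1$ and on the irreducible section $M_1\in|M|$; writing $M_1=\{Y_0+PY_1=0\}$ with $P$ homogeneous of degree $\delta$ in $X_0,X_1$, the automorphism $Y_0\mapsto Y_0-PY_1$ of $\mathbb{F}_\delta$ fixes every fiber and carries $M_1$ to the coordinate section $\{Y_0=0\}$, so that after this change $F_1$ and $M_1$ become coordinate curves exactly as $F_1$ and $M_0$ were in Theorem \ref{Th1_caso_especial}. The essential computation is then the identity $\Delta_n\cdot\tilde{C}_f=-\nu(h)$. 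Given a non-constant $h\in\mathcal{O}_{\mathbb{F}_\delta}(\mathbb{F}_\delta\setminus(F_1\cup M_1))$, its divisor has the form $C_f-aF_1-bM_1$ with $C_f$ effective of degree $(a,b)$ and containing neither $F_1$ nor $M_1$; using $\Delta_n\cdot D=b_n\alpha+c_n\beta-\sum_j\mathrm{mult}_{p_j}(\varphi_n)\,\gamma_j$ for $D=\alpha F^*+\beta M^*-\sum_j\gamma_jE_j^*$, together with Noether's formula and $\nu(\varphi_{F_1})=b_n$, $\nu(\varphi_{M_1})=c_n$, one gets $\Delta_n\cdot\tilde{C}_f=b_na+c_nb-\nu(\varphi_{C_f})=-\nu(h)$. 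Hence $\Delta_n$ nef forces $\nu(h)\leq 0$, giving $(b)\Rightarrow(a)$; conversely, if $\Delta_n$ is not nef there is an irreducible curve $C$ with $\Delta_n\cdot\tilde{C}<0$, necessarily distinct from $F_1$ and $M_1$ since $\Delta_n$ is orthogonal to their strict transforms, and it produces an $h$ with $\nu(h)>0$, contradicting $(a)$.

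The implication $(b)\Rightarrow(c)$ is immediate: a nef divisor satisfies $\Delta_n^2\geq 0$, and the computation $\Delta_n^2=2b_nc_n-\delta b_n^2-\overline{\beta}_{g+1}$ from the proof of Lemma \ref{Lemma_2_casogeneral}, with $\overline{\beta}_{g+1}=[\text{vol}(\nu)]^{-1}$, turns this into the stated inequality. For $(c)\Rightarrow(d)$ I would run the cone argument of Theorem \ref{Th1_caso_especial} verbatim, now with $S_2(Z)$, proving $\overline{NE}(Z)=S_2(Z)+S_2^\vee(Z)=NE(Z)$ together with $S_2^\vee(Z)\subseteq A(Z)\subseteq S_2(Z)$; the crux is that every generator of $S_2^\vee(Z)$ in Proposition \ref{Prop_generatorsofdualcone_casegeneral} has non-negative self-intersection. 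Indeed $F^{*2}=0$, $M^{*2}=\delta\geq 0$ and $\Theta_i^2=\delta-i\geq 0$ by Remark \ref{Remark_inequalitiesofThetaDeltaGammaUpsilon}, while hypothesis $(c)$ reads $\Delta_n^2\geq 0$, whence $\Delta_i^2\geq 0$ for all $i$ by Lemma \ref{Lemma_2_casogeneral}, and then $\Gamma_i^2\geq 0$ and $\Upsilon_{ik}^2\geq 0$ for all $i,k$ by Lemma \ref{Lemma_onlyDelta2nonnegative}. The Hodge index theorem \cite[Theorem 1.9]{Har} (yielding $A(Z)^\vee=A(Z)$ and $A(Z)\subseteq\overline{NE}(Z)$ via \cite[Lemma 1.20]{KolMor}) and the usual common-component argument for irreducible curves close the chain of equalities. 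Finally $(d)\Rightarrow(b)$ follows from Proposition \ref{Prop_generatorsofdualcone_casegeneral}: if $NE(Z)=S_2(Z)$, then $\Delta_n$, being one of the exhibited generators of the dual cone $S_2^\vee(Z)=P(Z)$, is nef.

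The main obstacle I anticipate lies in the reduction carried out in $(b)\Leftrightarrow(a)$: unlike $M_0$ in the special case, the section $M_1$ is not a priori a coordinate curve, so one must justify carefully that an automorphism of $\mathbb{F}_\delta$ brings it to standard form while fixing $F_1$ and preserving the valuation, and must verify that in the resulting chart the bookkeeping of which monomials and coordinate factors are \qq{at infinity} proceeds exactly as in Theorem \ref{Th1_caso_especial}, including the handling of functions divisible by the coordinate functions. The structurally new ingredient, by contrast, is cheap once Lemma \ref{Lemma_onlyDelta2nonnegative} is in place: it is precisely what allows the single numerical inequality $(c)$ to control the self-intersections of the additional dual-cone generators $\Gamma_i$ and $\Upsilon_{ik}$, which have no counterpart in the special case.
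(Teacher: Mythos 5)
Your proposal is correct and follows essentially the same route as the paper's proof: the same cycle of implications, the same reduction of (a)$\Leftrightarrow$(b) to the identity $\Delta_n\cdot\tilde{C}_f=-\nu(h)$ via the correspondence between regular functions on $\mathbb{F}_\delta\setminus(F_1\cup M_1)$ and curves avoiding $F_1,M_1$, and the same cone argument for (c)$\Rightarrow$(d) resting on Proposition \ref{Prop_generatorsofdualcone_casegeneral}, Lemmas \ref{Lemma_2_casogeneral} and \ref{Lemma_onlyDelta2nonnegative}, Remark \ref{Remark_inequalitiesofThetaDeltaGammaUpsilon} and the Hodge index theorem. The only (harmless) differences are presentational: you obtain the key identity intrinsically from $\mathrm{div}(h)=C_f-aF_1-bM_1$ and Noether's formula rather than from the paper's explicit coordinate computation, and you spell out the automorphism $Y_0\mapsto Y_0-PY_1$ normalizing $M_1$ to the coordinate section $\{Y_0=0\}$, a normalization the paper assumes implicitly when placing $p$ at $(0:1;0,1)$.
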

\begin{proof}
Our proof uses a close reasoning to that of Theorem \ref{Th1_caso_especial}. Keeping the notation as in that theorem, we are going to give a sketch of the proof emphasizing only the main differences.

To prove that Item (a) can be deduced from Item (b), we can suppose that $p$ is a general point of $\mathbb{F}_\delta$ with coordinates $(0:1;0,1)$. Consider local coordinates $\{x,y\}=\big\{\frac{X_1}{X_0},\frac{X_0^\delta Y_1}{Y_0}\big\}$ in the affine open set $U_{00}$ and $\{u,v\}=\big\{\frac{X_0}{X_1},\frac{Y_0}{X_1^\delta Y_1}\big\}$ in $U_{11}$. Notice that, with our notation, $F_1$ and $M_1$ are defined by the equations $X_0=0$ and $Y_0=0$, $p\in U_{11},$ and $F_1$ and $M_1$ have local equations $u=0$ and $v=0$, respectively.

If now $\mathcal{S}$ denotes the set of non-constant polynomials in $\mathcal{O}_{\mathbb{F}_\delta}(U_{00})$ (up to multiplication by a nonzero element of $k$) such that neither $x$ nor $y$ divide them, $f\in\mathcal{S}$ satisfies
\begin{equation}\label{Ecu_th1_caso_generalM1negativo}
f(x,y)=f(1/u,u^\delta/ v)=\dfrac{h_f(u,v)}{u^{\deg_1(h_f)}v^{\deg_2(h_f)}},
\end{equation}
where $h_f(u,v)\in\mathcal{O}_{\mathbb{F}_\delta}(U_{11})$.

The bi-homogeneous polynomial $X_1^{\deg_1(h_f)+\delta \deg_2(h_f) }Y_1^{\deg_2(h_f)}h_f(\frac{X_0}{X_1},\frac{Y_0}{X_1^\delta Y_1})$ defines a curve $C_f$ on $\mathbb{F}_\delta$ of degree $(\deg_1(h_f),\deg_2(h_f))$ and $f\mapsto C_f$ is a one-to-one correspondence between $\mathcal{S}$ and the set of curves on $\mathbb{F}_\delta$ containing no curve $F_1,F',M_0,M_1$ as a component, where $F'$ and $M_0$ are defined by the equations $X_0=0$ and $Y_0=0$. Then $\Delta_n\cdot C_f = -\nu(f)$ and by Item (b),  $-\nu(f)\geq 0.$ The case when $f\in\mathcal{O}_{\mathbb{F}_\delta}(U_{00})$ and $x$ or $y$ or both  are factors of $f$ follows as in Theorem \ref{Th1_caso_especial} and Item (a) is proved.

A proof of the fact that Item (a) implies Item (b), Item (b) implies Item (c) and Item (d) implies Item (b) can be done as in Theorem \ref{Th1_caso_especial}.

To see that Item (c) implies Item (d), it suffices to notice that, by Lemmas \ref{Lemma_2_casogeneral} and \ref{Lemma_onlyDelta2nonnegative},
$$
S_2^\vee(Z)\subseteq \{[D]\in \text{Pic}_\mathbb{Q}(Z) \ |\ [D]^2\geq 0 \mbox{ and }[H]\cdot [D] \geq 0\}=:A(Z),
$$
where $S_2^\vee(Z)$ is the dual cone defined in Proposition \ref{Prop_generatorsofdualcone_casegeneral} and $H$ an ample divisor on $Z$. Finally, the fact
$$
S_2^\vee(Z)\subseteq A(Z) \subseteq (S_2^\vee(Z))^\vee = S_2(Z)
$$
and a reasoning as in Theorem \ref{Th1_caso_especial} completes our proof.
\end{proof}

An immediate consequence of the above result is the following one.

\begin{corollary}
Let $\nu$ be a non-positive at infinity non-special divisorial valuation of $\mathbb{F}_\delta$. Consider the divisorial valuations $\nu_i$ defined by the divisors $E_i$ associated to the simple sequence of point blowing-ups that $\nu$ defines. Then, the valuations $\nu_i$, $\delta+1\leq i\leq n-1$, are non-positive at infinity.
\end{corollary}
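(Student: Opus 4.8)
The plan is to reduce the statement to the numerical criterion of Theorem~\ref{Th1_caso_general}, applied to each intermediate valuation $\nu_i$. The only genuine work is to check that $\nu_i$ remains non-special for the indices in question, so that the theorem is available for it, and that the invariants $b_i,c_i$ and $[\text{vol}(\nu_i)]^{-1}$ entering its condition (c) are exactly the quantities controlled by Remark~\ref{Remark_inequalitiesofThetaDeltaGammaUpsilon}. Once this matching is established, the propagation of the inequality does the rest.

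First I would verify that $\nu_i$ is non-special for every $i$ with $\delta+1\leq i\leq n-1$. Using the characterization of condition (3) given in the remark following Definition~\ref{SG}, the fact that $\nu$ is non-special means that $p_2$ does not lie on the strict transform of the fiber through $p$ on $Z_1$ and that there is an index $j\geq\delta+1$ for which $p_1,\ldots,p_j$ are all free; in particular $p_1,\ldots,p_{\delta+1}$ are free. Since the configuration of $\nu_i$ is the truncation $\{p_1,\ldots,p_i\}$ and $i\geq\delta+1$, both properties are inherited verbatim (the point $p_2$ is unchanged, and $p_1,\ldots,p_{\delta+1}$ remain free with $\delta+1\leq i$). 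Hence $\nu_i$ is non-special and Theorem~\ref{Th1_caso_general} applies to it.

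Next I would check that the section $M_1$ attached to $\nu$ also serves as the distinguished negative section for each $\nu_i$. Because $\tilde{M}_1^2=\delta-i_{M_1}<0$ on $Z$, one has $i_{M_1}\geq\delta+1$, and since $M_1$ is smooth and passes simply through $p_1,\ldots,p_{i_{M_1}}$ (as recorded in Lemma~\ref{Lemma_expressionM1_basistildeFM0}), its strict transform on $Z_i$ has self-intersection $\delta-\min\{i,i_{M_1}\}<0$, using $i,i_{M_1}\geq\delta+1$. By the uniqueness of the irreducible section of $|M|$ whose strict transform on $Z_i$ has negative self-intersection, this transform is the negative section of $\nu_i$; consequently the numbers $b_i=(\varphi_{F_1},\varphi_i)_p$ and $c_i=(\varphi_{M_1},\varphi_i)_p$ are precisely the invariants appearing in condition (c) of Theorem~\ref{Th1_caso_general} for $\nu_i$.

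With these identifications the conclusion is immediate. Applying Theorem~\ref{Th1_caso_general} to the non-positive at infinity valuation $\nu$ gives $2c_nb_n-\delta b_n^2\geq[\text{vol}(\nu)]^{-1}$; by Remark~\ref{Remark_inequalitiesofThetaDeltaGammaUpsilon} (equivalently, by $\Delta_i^2\geq 0$ for all $i$) this propagates to $2c_ib_i-\delta b_i^2\geq[\text{vol}(\nu_i)]^{-1}$ for every $i$, which is exactly condition (c) of Theorem~\ref{Th1_caso_general} for $\nu_i$. Therefore each $\nu_i$ with $\delta+1\leq i\leq n-1$ is non-positive at infinity. I expect the main obstacle to be the stability of both the non-special character and the section $M_1$ under truncation of the configuration; once that is secured, the numerical implication of Remark~\ref{Remark_inequalitiesofThetaDeltaGammaUpsilon} carries out the remaining work with no further computation.
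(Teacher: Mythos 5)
Your proof is correct and follows exactly the route the paper intends when it calls this an ``immediate consequence'' of Theorem~\ref{Th1_caso_general}: apply the equivalence (a)$\Leftrightarrow$(c) to $\nu$, propagate the inequality $2c_nb_n-\delta b_n^2\geq[\text{vol}(\nu)]^{-1}$ to the truncations via Remark~\ref{Remark_inequalitiesofThetaDeltaGammaUpsilon} (i.e.\ Lemma~\ref{Lemma_2_casogeneral}(b)), and reapply (c)$\Rightarrow$(a) to each $\nu_i$. Your explicit verifications that each $\nu_i$ remains non-special and that the strict transform of $M_1$ is the distinguished negative section for $\nu_i$ (so that $b_i,c_i$ are the correct invariants) are details the paper leaves implicit, and both are argued correctly.
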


\begin{remark}
Let $Z$ be a surface as in Theorem \ref{Th1_caso_general} defined by a non-positive at infinity non-special valuation. Then, all the divisors $\Theta_i$, $i=1,2,\ldots,\delta$; $\Delta_i,\Gamma_i$ and $\Upsilon_{ik},$ $i=\delta+1,\delta+2,\ldots,n$ and $k=1,2,\ldots,\delta-1$ are effective. Indeed, under this assumption, all the divisors $\Theta_i,\Delta_i,\Gamma_i$ and $\Upsilon_{ik}$ can be expressed as
\begin{equation*}
\begin{array}{c}
\Theta_i=(\Theta_i\cdot F^*)\tilde{M}_1 + \displaystyle\sum_{j=1}^n(\Theta_i\cdot\Delta_j)E_j,\\[2mm]
\Delta_i=(\Delta_i\cdot M^*_0)\tilde{F}_1+(\Delta_i\cdot F^*)\tilde{M}_1 + \displaystyle\sum_{j=1}^n(\Delta_i\cdot\Delta_j)E_j,\\[2mm]
\Gamma_i=(\Gamma_i\cdot F^*)\tilde{M}_1 + \displaystyle\sum_{j=1}^n(\Gamma_i\cdot\Delta_j)E_j,\\[2mm]
\Upsilon_{ik}=(\Upsilon_{ik}\cdot F^*)\tilde{M}_1 + \displaystyle\sum_{j=1}^n(\Upsilon_{ik}\cdot\Delta_j)E_j,
\end{array}
\end{equation*}
which are effective divisors since $\Theta_i,\Delta_i,\Gamma_i$ and $\Upsilon_{ik}$ are nef divisors.
\end{remark}

\begin{example}
Let $\nu$ be a non-special divisorial valuation of the Hirzebruch surface $\mathbb{F}_2$ whose sequence of maximal contact values is $\{15,51,262,786\}$. Set $\mathcal{C}_\nu =\{p_i\}_{i=1}^{12}$ the configuration of infinitely near points of $\nu$. Let $F_1$ be the fiber of $\mathbb{F}_2$ that goes through $p_1$, $M_0$ the special section and $M_1$ the section that is linearly equivalent to $M$ and passes through $p_1,p_2$ and $p_3$. Then $b_{12}=15, c_{12}=45$ and $[\mathrm{vol}(\nu)]^{-1}=786$, and so, Item (c) in Theorem \ref{Th1_caso_general} is satisfied. Therefore, the cone of curves of the surface $Z$ defined by $\nu$ is generated by $\{[\tilde{F}_1], [\tilde{M_0}], [\tilde{M_1}]\} \cup \{E_i\}_{i=1}^{12}$ and the divisors $\Delta_i$, $1 \leq i \leq 12$, defined in Proposition \ref{Prop_generatorsofdualcone_casegeneral} are nef.
\end{example}

We finish this paper with a result that gives two equivalent properties to the fact of that a non-special valuation is negative at infinite. It can be proved as we did in Theorem \ref{Th2_caso_especial}.

\begin{theorem}
\label{Th2_caso_general}
Keep the same assumptions and notations as in Theorem \ref{Th1_caso_general}. Then the following conditions are equivalent:
\begin{itemize}
\item[(a)] The valuation $\nu$ is negative at infinity.
\item[(b)] It holds that either $2c_nb_n - b_n^2 \delta > [\text{\emph{vol}}(\nu)]^{-1}$, or  $2c_nb_n - b_n^2 \delta = [\text{\emph{vol}}(\nu)]^{-1}$ and the Iitaka dimension of the divisor $\Delta_n$ vanishes.
\item[(c)] The inequality $\Delta_n\cdot\tilde{C}>0$ is satisfied for the strict transform on $Z$, $\tilde{C}$, of any curve $C$ on $\mathbb{F}_\delta$, $C\neq F_1,M_1$.
\end{itemize}
\end{theorem}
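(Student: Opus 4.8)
The plan is to follow the proof of Theorem \ref{Th2_caso_especial} line by line, substituting the divisor $\Lambda_n$ by $\Delta_n$, the special section $M_0$ by the section $M_1$, the dual cone $S_1^\vee(Z)$ by $S_2^\vee(Z)$, and the number $2a_nb_n+\delta b_n^2$ by $2c_nb_n-\delta b_n^2$. As in Theorem \ref{Th1_caso_general}, I may assume $p=p_1$ is a general point of coordinates $(0:1;0,1)$ and keep the charts, the set $\mathcal{S}$, and the correspondence $f\mapsto C_f$ introduced there, for which $\Delta_n\cdot\tilde C_f=-\nu(f)$ whenever $f\in\mathcal{S}$. I will also use repeatedly the equivalences of Theorem \ref{Th1_caso_general} (so that $\Delta_n$ nef $\Leftrightarrow$ $2c_nb_n-\delta b_n^2\geq[\mathrm{vol}(\nu)]^{-1}$ $\Leftrightarrow$ $NE(Z)$ is the cone of its Item (d)), together with the elementary facts $\Delta_n\cdot\tilde F_1=\Delta_n\cdot\tilde M_1=0$, $\Delta_n\cdot E_j=0$ for $j<n$, $\Delta_n\cdot E_n=1$, and $\Delta_n\cdot\tilde M_0>0$ (the last because $c_n>\delta b_n$ for a non-special valuation). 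Consequently $[\Delta_n]^\perp\cap NE(Z)$ is the face spanned by $[\tilde F_1],[\tilde M_1],[E_1],\dots,[E_{n-1}]$ and does not contain $[\tilde M_0]$.

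For (a)$\Rightarrow$(b) I argue by contradiction as in Theorem \ref{Th2_caso_especial}. Negativity implies non-positivity, so by Theorem \ref{Th1_caso_general} the class $\Delta_n$ is nef with $\Delta_n^2\geq 0$; if (b) failed we would have $\Delta_n^2=0$ and the Iitaka dimension of $\Delta_n$ positive, hence $\dim|m\Delta_n|>0$ for $m\gg 0$. Then some member of $|m\Delta_n|$ contains the strict transform of an integral curve $C\neq F_1,M_1$, i.e. $m\Delta_n-\tilde C_f$ is effective for a suitable $f\in\mathcal{S}$, and pairing with the nef class $\Delta_n$ gives $0\leq\Delta_n\cdot(m\Delta_n-\tilde C_f)=-\Delta_n\cdot\tilde C_f$. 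Thus $\nu(f)=-\Delta_n\cdot\tilde C_f=0$ for a non-constant $f$, contradicting (a).

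For (b)$\Rightarrow$(c) I again reason by contradiction: let $C\neq F_1,M_1$ be integral with $\Delta_n\cdot\tilde C\leq 0$, forcing $\Delta_n\cdot\tilde C=0$ by nefness, so $[\tilde C]$ lies in the face $\mathcal{F}=[\Delta_n]^\perp\cap NE(Z)$. Since the extremal rays of the polyhedral cone $NE(Z)$ are spanned by the irreducible negative curves $[\tilde F_1],[\tilde M_0],[\tilde M_1],[E_i]$ and $\tilde C$ is none of them (it is a curve of $\mathbb{F}_\delta$ other than $F_1,M_1$, while $[\tilde M_0]\notin\mathcal{F}$), the class $[\tilde C]$ is not extremal, whence $\tilde C^2\geq 0$. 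If $2c_nb_n-\delta b_n^2>[\mathrm{vol}(\nu)]^{-1}$, that is $\Delta_n^2>0$, this is already impossible by the Hodge index theorem, so (c) holds. In the boundary case $\Delta_n^2=0$, the Hodge index theorem forces $\tilde C^2\leq 0$, hence $\tilde C^2=0$; realising $A(Z)$ as the projective cone over a strictly convex Euclidean ball \cite[Theorem 1.9]{Har}, the orthogonality of the two boundary classes $\tilde C$ and $\Delta_n$ yields $[\tilde C]=q[\Delta_n]$ with $q>0$. Then $m\tilde C\sim m'\Delta_n$ for suitable positive integers, while the effective expression of $\Delta_n$ in the basis $\{[\tilde F_1],[\tilde M_1]\}\cup\{[E_j]\}$ supplied by the remark following Theorem \ref{Th1_caso_general} produces a second effective divisor in $|m'\Delta_n|$ whose support avoids the irreducible curve $\tilde C$; the two distinct members give $\dim|m'\Delta_n|\geq 1$, contradicting the vanishing of the Iitaka dimension assumed in (b). Finally, (c)$\Rightarrow$(a) is obtained exactly as the implication (b)$\Rightarrow$(a) of Theorem \ref{Th1_caso_general}, expanding $\nu(h)$ for a non-constant $h\in\mathcal{O}_{\mathbb{F}_\delta}(\mathbb{F}_\delta\setminus(F_1\cup M_1))$ as a sum of the quantities $-\Delta_n\cdot\tilde C$ over the components $C\neq F_1,M_1$ of the attached curve, now strictly negative by (c).

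The main obstacle I anticipate is the geometric core of (b)$\Rightarrow$(c): establishing, via the strict convexity of the Hodge ball, that a curve orthogonal to the boundary class $\Delta_n$ must be proportional to it, and then turning that proportionality into a genuinely two-dimensional linear system contradicting the vanishing of the Iitaka dimension. This is where the adaptation departs most from bookkeeping, and it relies essentially on $\Delta_n$ being effective with positive $[\tilde M_1]$-coefficient (so that the standard representative of $|m'\Delta_n|$ is supported off $\tilde C$) and on the polyhedrality of $NE(Z)$ granted by Theorem \ref{Th1_caso_general}. Secondary care is needed to confirm $\Delta_n\cdot\tilde M_0>0$, so that the section $M_0$ — which is allowed in (c) — never produces a counterexample.
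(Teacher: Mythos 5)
Your proposal is correct and follows essentially the same route as the paper, whose entire proof of Theorem \ref{Th2_caso_general} is the remark that it ``can be proved as we did in Theorem \ref{Th2_caso_especial}'': you have carried out exactly that adaptation, with the right substitutions ($\Lambda_n\mapsto\Delta_n$, $M_0\mapsto M_1$, $S_1^\vee\mapsto S_2^\vee$) and the correct non-special-specific verifications, notably $\Delta_n\cdot\tilde M_0=c_n-\delta b_n>0$ so that $M_0$ (which is allowed in (c)) causes no trouble, and the effectivity of $\Delta_n$ from the remark after Theorem \ref{Th1_caso_general} to contradict the vanishing of the Iitaka dimension. Your treatment of the case $\Delta_n^2=0$ via strict convexity of the Hodge ball and two distinct members of $|m'\Delta_n|$ is just a more explicit rendering of the paper's appeal to $A(Z)$ being a projective cone over a strictly convex ball together with Remark \ref{34}.
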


\section*{Acknowledgements}
The authors thank M. Jonsson and W. Veys for valuable comments which help to improve the paper.

%%%%%%%%%%%%%%%%%%%%%%%%%%%%%%%%%%%%%%%%%%%%%%%%%%%%%%%%%%
%%%%%%%%%%%%%%%%% Bibliografía %%%%%%%%%%%%%%%%%%%%%%%%%%%
%%%%%%%%%%%%%%%%%%%%%%%%%%%%%%%%%%%%%%%%%%%%%%%%%%%%%%%%%%

\bibliographystyle{plain}
\bibliography{BIBLIO}

\end{document}